\documentclass[11pt, reqno]{amsart}
\usepackage{amsfonts}
\usepackage{latexsym}
\usepackage{amssymb}
\usepackage{amsmath}
\usepackage{color}
\usepackage{bbm}
\usepackage{tikz}
\usepackage{enumerate}
\usepackage{enumitem}
\usepackage{mathrsfs}
\usepackage{todonotes}
\usepackage{relsize}
\usepackage{xfrac}
\usepackage[scr=boondox]{mathalpha}
\usepackage{verbatim}
\usepackage[left=2.5cm, top=2.5cm,bottom=2.5cm,right=2.5cm]{geometry}


\newcommand{\R}{\mathbb R}
\newcommand{\N}{\mathbb N}

\newcommand{\E}{\mathbb E}
\newcommand{\Pro}{\mathbb P}

\newcommand{\vol}{\mathrm{vol}}

\def\dint{\textup{d}}
\newcommand{\SSS}{\ensuremath{{\mathbb S}}}

\newcommand{\B}{\ensuremath{{\mathbb B}}}
\newcommand{\D}{\ensuremath{{\mathbb D}}}




\newtheorem{thm}{Theorem}[section]
\newtheorem{cor}[thm]{Corollary}
\newtheorem{lemma}[thm]{Lemma}
\newtheorem{df}[thm]{Definition}
\newtheorem{proposition}[thm]{Proposition}

{
\theoremstyle{definition}

\newtheorem{rmk}[thm]{Remark}

}

\def\bC{\mathbf{C}}

\def\bN{\mathbf{N}}
\def\bP{\mathbf{P}}

\def\bU{\mathbf{U}}

 	\definecolor{Kgreen}{rgb}{0.0, 0.5, 0.0}


\usepackage{nomencl}
\makenomenclature

\setlength{\parindent}{0pt}


\begin{document}


\title[Sharp Asymptotics for $q$-Norms of Random Vectors in High-Dimensional $\ell_p^n$-Balls ]{Sharp Asymptotics for $q$-Norms of Random Vectors\\ in High-Dimensional $\ell_p^n$-Balls}

\author[Tom Kaufmann]{Tom Kaufmann}
\address{Tom Kaufmann: Faculty of Mathematics, Ruhr University Bochum, Germany} \email{tom.kaufmann@rub.de}

\keywords{Asymptotic geometric analysis, Bahadur Ranga Rao, high-dimensional convexity, intersection volume, $\ell_p^n$-balls, $\ell_p^n$-spheres, large deviation principles,  precise large deviations, sharp asymptotics, sharp large deviations, strong large deviations, volume of convex bodies.}
\subjclass[2010]{Primary: 52A23, 60F10  Secondary: 46B09, 60D05}



\begin{abstract}
Sharp large deviation results of Bahadur--Ranga Rao type are provided for the $q$-norm of random vectors distributed on the $\ell _{p}^{n}$-ball ${\mathbb{B}}^{n}_{p}$ according to the cone probability measure or the uniform distribution for $1 \le q<p < \infty $, thereby furthering previous large deviation results by Kabluchko, Prochno and Th\"{a}le in the same setting. These results are then applied to deduce sharp asymptotics for intersection volumes of different $\ell _{p}^{n}$-balls in the spirit of Schechtman and Schmuckenschl\"{a}ger, and for the length of the projection of an $\ell _{p}^{n}$-ball onto a line with uniform random direction. The sharp large deviation results are proven by providing convenient probabilistic representations of the $q$-norms, employing local limit theorems to approximate their densities, and then using geometric results for asymptotic expansions of Laplace integrals to integrate these densities and derive concrete probability estimates.
\end{abstract}
\maketitle
%
%
%
%
\section{Introduction}\label{sec:Introduction}
The study of convex bodies in high dimensions, known today as asymptotic
geometric analysis, has arisen from the local theory of Banach spaces,
which aimed at analyzing infinite-dimensional normed spaces via their finite-dimensional
substructures, such as their unit balls. Despite having its origin in the
realm of functional analysis, the field has since established itself in
its own right, considering problems also beyond the study of centrally
symmetric convex bodies that occur naturally as the unit balls of Banach
spaces. In high dimensions convex bodies exhibit certain regularities,
such as concentration of measure phenomena (see, e.g.,
\cite{GuedonConcentrationPhenomena}), which make it highly useful to approach
them from a probabilistic perspective. As pointed out in
\cite{BookAGA}, it might seem counter-intuitive to analyze something exhibiting
regularities from a probabilistic perspective, as probability concerns
itself with studying the nature of irregularity, i.e. randomness, of given
quantities. But as with well-known limit theorems from probability such
as the law of large numbers and the central limit theorem, with large sample
sizes (and analogously -- with high dimensionality) random objects exhibit
interesting patterns well characterized in the language of probability
and vice versa. Many results analogous to those from classic probability
have been found for high-dimensional convex sets, such as the central limit
theorem (see, e.g., Anttila, Ball and Perissinaki \cite{ABPclt}, Klartag
\cite{KlartagCLT,KlartagCLT2}). For further background on high-dimensional
convexity, see
\cite{BookAGA,BookGICB,GuedonConcentrationPhenomena,GuedonConcentrationIneq}.%

The $\ell _{p}^{n}$-ball ${\mathbb{B}}^{n}_{p}$,
$n \in \mathbb{N}$, has been a prominent object of study, as it is the unit
ball of the (finite-dimensional) sequence space $\ell _{p}^{n}$, and has
been the subject of a multitude of results. We will name only a select
few and refer to the survey by Prochno, Th\"{a}le and Turchi
\cite{PTTSurvey} for a comprehensive summary of classic and contemporary
results. Let us denote by $\mathbf{U}_{n,p}$ the uniform distribution
on the Euclidean $\ell _{p}^{n}$-ball
${\mathbb{B}}_{p}^{n}$ and by $\mathbf{C}_{n,p}$ the cone
probability measure on the $\ell _{p}^{n}$-sphere
${\mathbb{S}}_{p}^{n-1}$. Schechtman and Zinn
\cite{SchechtmanZinn} and Rachev and R\"{u}schendorf
\cite{RachevRueschendorf} showed a generalization of the Poincar\'{e}--Maxwell--Borel
lemma, proving that, for $k \in \mathbb{N}$ with $k < n$, the $k$-dimensional
marginal distribution of a random vector distributed according to
$\mathbf{C}_{n,p}$ converges to a $k$-dimensional generalized Gaussian
distribution as $n$ increases. They also provided a probabilistic representation for such
random vectors in terms of these generalized Gaussian distributions, which
will be a key building block in our main results. The primary quantity
of interest of this paper however is the behaviour of the $q$-norm
$\|Z\|_{q}$ of a random vector $Z$ in
${\mathbb{S}}_{p}^{n-1}$ and
${\mathbb{B}}_{p}^{n}$. This was first studied by Schechtman
and Zinn \cite{SchechtmanZinn}, who derived concentration inequalities
for $\|Z\|_{q}$ with $Z \sim \mathbf{C}_{n,p}$ and
$Z \sim \mathbf{U}_{n,p}$ for $q>p$. This is closely related to the intersection
volume of $t$-multiples of volume-normalized $\ell _{p}^{n}$-balls
${\mathbb{D}}_{p}^{n}:= \mathrm{vol}_{n}(
{\mathbb{B}}_{p}^{n})^{-1/n} {\mathbb{B}}_{p}^{n}$,
i.e.
$\text{vol}_{n}({\mathbb{D}}_{p}^{n} \cap t
{\mathbb{D}}_{q}^{n})$ with $t\in [0, \infty )$, for which
Schechtman and Schmuckenschl\"{a}ger
\cite{SchechtmanSchmuckenschlaeger} gave the asymptotics for
$t\ne 1$. Schechtman and Zinn \cite{SchechtmanZinn2000} expanded
their
previous results in \cite{SchechtmanZinn}, by not only considering the
$q$-norm, but also images of random vectors under Lipschitz functions in
general. Thus, they gave concentration inequalities for $f(Z)$, with
$Z \sim \mathbf{C}_{n,p}$ and $Z \sim \mathbf{U}_{n,p}$,
$p\in [1,2)$, and $f$ a Lipschitz function with respect to the Euclidean
norm. Schmuckenschl\"{a}ger \cite{SchmuckCLT} provided a central limit
theorem (CLT) for $\|Z\|_{q}$ with $Z \sim \mathbf{C}_{n,p}$ and
$Z \sim \mathbf{U}_{n,p}$ and used it to refine the previous intersection
results in \cite{SchechtmanSchmuckenschlaeger} for all
$t\in (0,\infty )$. Naor \cite{NaorTAMS} gave concentration inequalities
for $\|Z\|^{q}_{q}$ with $Z \sim \mathbf{C}_{n,p}$, showed that the total
variation distance between $\mathbf{C}_{n,p}$ and the normalized surface
measure $\sigma _{n,p}$ on ${\mathbb{S}}_{p}^{n-1}$ tends
to zero proportional to $n^{-1/2}$, and used the previously mentioned results
to show a concentration inequality for $\|Z\|^{q}_{q}$ with
$Z \sim \sigma _{n,p}$. He also discussed how
concentration results similar
to Schechtman and Zinn \cite{SchechtmanZinn2000} for $\|Z\|_{q}$ could
already be derived from previous results of Gromov and Milman
\cite{Gromov} for the concentration of Lipschitz functions on convex bodies.
Kabluchko, Prochno and Th\"{a}le \cite{KPTLimitThm} gave a multivariate
CLT for $(\|Z\|_{q_{1}}, \ldots , \|Z\|_{q_{d}})$ with
$Z \sim \mathbf{U}_{n,p}$ in the spirit of \cite{SchmuckCLT} and also
considered the asymptotics for the intersection volume of multiple
$\ell _{p}^{n}$-balls, i.e.
$\text{vol}_{n}({\mathbb{D}}_{p}^{n} \cap t_{1}
{\mathbb{D}}_{q_{1}}^{n} \cap \cdots \cap t_{d}
{\mathbb{D}}_{q_{d}}^{n})$ with
$t_{i} \in [0, \infty )$. This CLT was furthermore applied by the same
authors to infer a central limit theorem for the length of
${\mathbb{B}}_{p}^{n}$ projected onto a line with uniform
random direction. Moreover, they provided a large deviation principle (LDP)
for $\|Z\|_{q}$ with $Z \sim \mathbf{C}_{n,p}$ and
$Z \sim \mathbf{U}_{n,p}$. In a follow-up paper \cite{KPTLimitThm2}, the
same authors showed a CLT for $\|Z\|_{q}$, where the distribution of
$Z$ is taken from a wider class of $p$-radial distributions
$\mathbf{P}_{n,p, \textbf{W}}$, introduced by Barthe, Gu\'{e}don, Mendelson
and Naor \cite{BartheGuedonEtAl}, consisting of mixtures of
$\mathbf{U}_{n,p}$ and $\mathbf{C}_{n,p}$, combined via a measure
$\textbf{W}$ on $[0, \infty )$. This class contains both
$\mathbf{U}_{n,p}$ and $\mathbf{C}_{n,p}$, but also distributions corresponding
with geometrically interesting projections (see, e.g.,
\cite[Introduction, (iii)]{KPTLimitThm2}). Finally, they gave a moderate
and a large deviation principle for $\|Z\|_{q}$ with
$Z \sim \mathbf{P}_{n,p, \textbf{W}}$.

Generally, studying large deviations within asymptotic geometric analysis
has started fairly recently with Gantert, Kim and Ramanan \cite{GKR}, who
gave an LDP for projections of random points in $\ell _{p}^{n}$-balls with
distributions $\mathbf{C}_{n,p}$ and $\mathbf{U}_{n,p}$ onto both random
and fixed one-dimensional subspaces. Today, large deviations theory has
become a well-established toolbox in high-dimensional convex geometry,
giving rise to a plethora of results (see, e.g.,
\cite{APTldp,APTclt,KPTLimitThm,KPTSanov,KPTLimitThm2,KimPhD,KimRamanan}).
Recently, a new tool from large deviations theory was introduced to asymptotic
geometric analysis by Liao and Ramanan \cite{LiaoRamanan}. They gave sharp
large deviation (SLD) results in the spirit of Bahadur and Ranga Rao
\cite{Bahadur} and Petrov \cite{Petrov} for the projections of random points
in $\ell _{p}^{n}$-balls with distributions $\mathbf{C}_{n,p}$ and
$\mathbf{U}_{n,p}$ onto a fixed one-dimensional subspace. Other works
in asymptotic geometric analysis have also employed methods from sharp
large deviations theory as well, such as Kabluchko and Prochno
\cite{KP}, who derived asymptotic volumes for generalizations of
$\ell _{p}^{n}$-balls, known as Orlicz balls, and showed a Schechtman and
Schmuckenschl\"{a}ger-type result by considering intersection volumes of
Orlicz balls. Their results on Orlicz balls were then expanded upon by
Alonso-Guiterr\'{e}z and Prochno in \cite{AP}, who gave the exact asymptotic
volume of Orlicz balls and provided thin-shell concentrations for them,
augmenting their results into sharp asymptotics under certain conditions.%
While LDPs only give tail asymptotics on a logarithmic scale, the sharp
asymptotics provided by sharp large deviations theory can give tail estimates
for concrete values of $n \in \mathbb{N}$, which makes them significantly
more useful for practical applications. Moreover, a lot of idiosyncrasies
of the underlying distributions, that are drowned out on the LDP scale,
are still visible on the SLD scale, thus giving a deeper understanding
of the geometric interpretation of the quantities involved. This paper
will follow closely in the footsteps of Liao and Ramanan
\cite{LiaoRamanan} and establish SLD results for the $q$-norms of random
vectors with distribution $\mathbf{C}_{n,p}$ and
$\mathbf{U}_{n,p}$. Furthermore, we will use these results to expand
on
works of Schechtman and Schmuckenschl\"{a}ger
\cite{SchechtmanSchmuckenschlaeger}, Schmuckenschl\"{a}ger
\cite{SchmuckCLT}, and Kabluchko, Prochno and Th\"{a}le
\cite{KPTLimitThm} for intersection volumes of $\ell _{p}^{n}$-balls by
giving sharp asymptotics for
$\text{vol}_{n}({\mathbb{D}}_{p}^{n} \cap t
{\mathbb{D}}_{q}^{n})$ at a considerably improved rate for
$1 \le q<p < \infty $ and $t > C(p,q)$ bigger than some constant dependent
on $p$ and $q$ only. Additionally, we will also apply our results for
$\ell _{p}^{n}$-spheres to retain sharp asymptotics for the length of the
projection of an $\ell _{p}^{n}$-ball onto the line spanned by a uniform
random direction.\looseness=1

The paper will proceed as follows: in Section~\ref{sec:Preliminaries} some basic notation and
definitions will be provided while also giving some appropriate background
on the involved large deviations theory. Furthermore, we will recapitulate
some existing results that are relevant to this paper. In Section~\ref{sec:MainResults} we will present our main results regarding the
$q$-norms of random vectors on $\ell _{p}^{n}$-spheres and
$\ell _{p}^{n}$-balls. Also, we will present and prove their application
to intersections and one-dimensional projections of $\ell _{p}^{n}$-balls,
and outline the idea of the two central proofs. In Section~\ref{sec:ProbRep} we will reformulate the target probabilities from the
main results in terms of useful probabilistic representations, using well-established
representations of random vectors in $\ell _{p}^{n}$-balls of Schechtman
and Zinn \cite{SchechtmanZinn} and Rachev and R\"{u}schendorf
\cite{RachevRueschendorf}. In Section~\ref{sec:JointDensityEstimate} local
density approximations of these probabilistic representations will be provided.
In Sections~\ref{sec:ProofMainResult1} and~\ref{sec:ProofMainResult2} we
will
prove the SLD results for $\ell _{p}^{n}$-spheres and
$\ell _{p}^{n}$-balls, respectively, by integrating over the density estimates.
For that, we will utilize some geometric results for asymptotic expansions
of Laplace integrals from Adriani and Baldi \cite{AdrianiBaldi} and Breitung
and Hohenbichler \cite{Breitung1989}.
\section{Preliminaries}\label{sec:Preliminaries}

\subsection{Notation and important distributions}\label{subsec:Distributions}
We denote by $\vol_d$ the $d$-dimensional Lebesgue measure on $\R^d$ and write $\mathcal{B}(\R^d)$ for the $\sigma$-field of Borel sets in $\R^d$. For a set $A \in \mathcal{B}(\R^d)$ we write $A^\circ, \overline{A},$ $\partial A$, and $A^c$ for the interior, closure, boundary and complement of $A$, respectively. Furthermore, we write $ \langle \, \cdot \, , \, \cdot \,  \rangle$ for the standard scalar product in $\R^d$. 
For $g: \R^d \to \R^d$, we denote by $J_x g(x^*)$ the Jacobian of $g$ with respect to the vector $x$ evaluated at $x^* \in \R^d$, and for $f:\R^d \to \R$ by $\nabla_x f(x^*)$ and $\mathcal{H}_x f(x^*)$ the gradient and Hessian of $f$ with respect to the vector $x$ evaluated at $x^* \in \R^d$, respectively, and use the shorthand notation 
\begin{equation} \label{eq:AblNotation}
f_{[i_1, \ldots, i_d]}(x^*) = \displaystyle \frac{\partial^{i_1}}{\partial x_1^{i_1}} \ldots \frac{\partial^{i_d}}{\partial x_1^{i_d}} \, f (x)\big{|}_{x=x^*}.
\end{equation}
We write $(x_1, \ldots, x_d) \in \R^d$ for a standard column vector and for $x, y\in \R^d$, we write their product $x^Ty$ as $xy$, skipping the explicit transpose for brevity of notation. Given a random variable $X$ with distribution $\bP$, we write  $X \sim \bP$ and denote by $\E X$ its expectation. For two random variables $X,Y$ with the same distribution we write $X \overset{d}{=}Y$. For a random vector $X$ in $\R^d$ and $s \in \R^d$,  denote by $\varphi_X(s):= \E[e^{\langle s, X \rangle}]$ and $\Lambda_X(s) := \log \varphi_X(s)$ the moment generating function and cumulant generating function (m.g.f. and c.g.f.), respectively. We call the set of $s \in \R^d$ for which $\Lambda_X(s) < \infty$  the effective domain $\mathcal{D}_X$ of $\Lambda_X$.  Moreover, for $x \in \R^d$ we denote by $\Lambda_X^*(x) := \sup_{s \in \R^d} [\langle x, s \rangle - \Lambda_X(s)]$ the Legendre-Fenchel transform of the c.g.f. $\Lambda_X$. When considering sequences in $n \in \N$, we denote by $o(1)$ a sequence that tends to zero as $n \to \infty$. \\
\\
Let us consider the class of distributions at the core of the probabilistic constructions throughout this paper. We say a real-valued random variable $X$ has a generalized Gaussian distribution if its distribution has Lebesgue density 
$$\displaystyle f_{\textup{gen}}(x):=  \displaystyle \frac{b}{2 \, a\, \Gamma\left(\frac{1}{b}\right)} \, e^{-\big({|x- m|}/{a}\big)^b},\qquad x\in\R,$$
where $m \in \R$ and $a,b>0$, and denote this by $X \sim {\bN}_{\textup{gen}}(m, a, b)$. As mentioned in the introduction, the generalized Gaussian distributions are essential for constructing probabilistically equivalent representations of the quantities of interest, based on results of Schechtman and Zinn \cite{SchechtmanZinn} and Rachev and R\"uschendorf \cite{RachevRueschendorf}.  For these constructions we will be using the specific generalized Gaussian distribution $\bN_p := {\bN}_{\textup{gen}} \left(0, p^{1/p}, p \right)$ , $p \in [1, \infty),$ with density
$$
\displaystyle f_p(x) := \frac{1}{2 \, p^{1/p} \, \Gamma\big(1+\frac{1}{p}\big)}\, e^{-{|x|^p}/{p}},  \qquad x\in\R.
$$
For $X \sim \bN_p$ and $r >0$, we write $M_p(r):= \E |X|^r$ for the $r$-th absolute moment of $X$, for which it holds that 
\begin{equation} \label{eq:MomentXp}
M_p(r):= \E |X|^r = \left( \frac{p^{r/p}}{r+1} \, \frac{\Gamma \left(1 + \frac{r+1}{p}\right)}{\Gamma\left( 1+ \frac{1}{p}\right)} \right).
\end{equation}
%
%
\subsection{Background material from (sharp) large deviations theory}\label{subsec:PrelimLDP} We will give some basic notions and definitions from large deviations theory. To keep this paper self-contained, we will present them here, while referring the reader to \cite{DZ,dH,Kallenberg} for additional background material on large deviations. Furthermore, we want to give some insight into the methods of the lesser known theory of sharp large deviations. 
\begin{df}
	Let $(\textup{\textbf{P}}_n)_{n\in\N}$ be a sequence of probability measures on $\R^d$. We say that $(\textup{\textbf{P}}_n)_{n\in\N}$ satisfies a large deviation principle (LDP) if there are two functions $s:\N\to\R$ and $\mathcal{I}:\R^d\to[0,\infty]$, such that $\mathcal{I}$ is lower semi-continuous and
	\begin{center}
		$
		\begin{array}{lllll}
		a)&\displaystyle \underset{n\to\infty}{\textup{lim sup}} \,\, \frac{1}{s(n)}\log \textup{\textbf{P}}_n(C) &\le& -\mathcal{I}(C) &\,\,\,\, \,\,\,\, \,\,\,\, \,\,\,\, \text{ for all } \,\, C\subset \R^n \text{ closed,}\\
		b)&\displaystyle \underset{n\to\infty}{\textup{lim inf}} \,\, \frac{1}{s(n)}\log \textup{\textbf{P}}_n(O) &\ge& -\mathcal{I}(O) &\,\,\,\, \,\,\,\, \,\,\,\, \,\,\,\, \text{ for all } \,\, O\subset \R^n \text{ open,}\\
		\end{array}
		$
	\end{center}
	where for $B \subset \R^d$ we define $\mathcal{I}(B) :=\inf_{x\in B}\mathcal{I}(x)$. We call $s$ the speed and $\mathcal{I}$ the rate function. We say that $\mathcal{I}$ is a good rate function, if it has compact sub-level sets.
\end{df}
We apply the definition of LDPs to sequences of random variables as well by applying the above definition to the sequence of their distributions. In our setting the sequence parameter $n \in \mathbb{N}$ will furthermore coincide with the space dimension $d \in \mathbb{N}$, as we are considering the effects of increasing dimensionality. Given a sequence $(X^{(n)})_{n \in \mathbb{N}}$ of i.i.d. random vectors in $\mathbb{R}^{d}$, one is frequently interested in the behaviour of the sequence $(S^{(n)})_{n \in \mathbb{N}}$ of the empirical averages $S^{(n)} := \frac{1}{n} \sum _{i=1}^{n} X^{(i)} \in \mathbb{R}^{d}$. One of the most well known and most frequently used results in the theory of large deviations is the theorem of Cram\'{e}r, which states that if the c.g.f. $\Lambda _{X}$ is finite in an open neighbourhood of the origin, then $(S^{(n)})_{n \in \mathbb{N}}$ satisfies an LDP in $\mathbb{R}^{d}$ with speed $n$ and rate function $\Lambda ^{*}_{X}$ (see, e.g., \cite[Theorem 2.2.30, Theorem 6.1.3, Corollary 6.1.6]{DZ}). Hence, under suitable exponential moment assumptions for the $X^{(n)}$, we can already infer the large deviation behaviour of $(S^{(n)})_{n \in \mathbb{N}}$.\\

The classic LDP gives us an idea of the asymptotic deviation behaviour of a sequence of distributions on a logarithmic scale. By doing this however, a lot of subtleties of the underlying distributions can be drowned out. Many small and medium scale properties of a given sequence of distributions are often missed in the asymptotic analysis of LDPs, since they either disappear for very large $n \in \mathbb{N}$ or are drowned out by other, more significant phenomena of the distribution. Thus, one is also interested in considering large deviations on a nonlogarithmic scale, which we refer to as ``sharp'' large deviations (also called ``precise'' or ``strong'' large deviations in the literature). One of the first and most prominent results in this regard was shown by Bahadur and Ranga Rao \cite{Bahadur}. They showed that for a sequence $(X^{(n)})_{n \in \mathbb{N}}$ of i.i.d. random variables and any $z > \mathbb{E}[X^{(n)}]$ with $\Lambda _{X}^{*}(z) < \infty $, it holds that
$$\Pro\left (S^{(n)} > z \right ) = \frac{1}{\sqrt{2 \pi n} \,  \kappa(z)  \xi(z)} e^{-n \Lambda_X^*(z)}  (1 + \textit{o}(1)),$$
where $\kappa(z)$ and $\xi(z)$ are only dependent on the distribution of the $X^{(n)}$ and the deviation size $z$. %
%
%
%
%
 This is proven via a (somewhat implicit) application of the the so-called saddle point method (or method of steepest descents), which was established by Debye \cite{Debye1909}, and brought to the realm of probability by Esscher \cite{Esscher1932} and Daniels \cite{Daniels1954}. The saddle point method generalizes Laplace's method for integral approximation to the complex plane, and is therefore highly useful when dealing with integrals over characteristic functions. In general, for appropriate functions $f,g$ and $n\in \N$ large, the saddle point method gives a way to approximate Laplace-type integrals $ \int_P g(z)e^{-nf(z)} \dint z$  along complex paths $P$, by deforming the integration path using Cauchy's theorem, into some $\tilde P$ that passes through a saddle point of $f$. The mass of the reformulated integral is then heavily concentrated around the saddle point and standard integral expansion methods, such as Edgeworth expansion, can be used to great effect. In the realm of probability, this has been used for both tail probabilities (e.g. Esscher \cite{Esscher1932}, Cram\'er \cite{Cramer1938}) and densities of random variables (e.g. Daniels \cite{Daniels1954}, Richter \cite{Richter1957, Richter1958}), by writing them as an integral over their characteristic functions, using the Fourier inversion formula, and then approximating those integrals via the use of a complex saddle point.  We say that this was used ``somewhat implicitly'' in certain results, such as those of Esscher \cite{Esscher1932}, Cram\'er \cite{Cramer1938} and Bahadur and Ranga Rao \cite{Bahadur}, since the technique used therein, which is a certain change of measure, often called exponential tilting or Esscher/Cram\'er transform, under the surface employs saddle points as well. For further background on this method, we refer to the book of Jensen \cite{JensenBook}.\\
 \\
 As mentioned in the introduction, Section \ref{sec:JointDensityEstimate} will provide density estimates for our probabilistic representations from Section \ref{sec:ProbRep}, which are derived using the saddle point method. However, since our probabilistic representations are given as sums of i.i.d. \hspace{-0.1cm}random vectors, we will refer to previous results where this was done explicitly, while making sure that the conditions for their application are still met in our setting. Generally, the core idea of the saddle point method, which is reformula-ting an integral such that all of its mass heavily concentrates around a critical point, around which we can then employ approximation methods, is used in the overall proof of our main results in a broader sense as well. We reformulate our target probabilities via some convenient representations, whose densities we also provide, such that the remaining integrals then heavily concentrate their mass around a given critical point, such that approximations at that point yield accurate results, as we will see in Sections \ref{sec:ProofMainResult1} and \ref{sec:ProofMainResult2}. 
\subsection{Distributions on $\ell_p^n$-balls}\label{subsec:GeometryLp}
For $p \in [1, \infty]$, $n\in\N$, and $x=(x_1,\ldots,x_n)\in\R^n$ we denote by
\begin{equation} \label{eq:LpNorm}
\|x\|_p := \begin{cases}
\Big(\sum\limits_{i=1}^n|x_i|^p\Big)^{1/p} &: p < \infty\\
\max\{|x_1|,\ldots,|x_n|\} &: p=\infty
\end{cases}
\end{equation}
the $\ell_p^n$-norm of $x$. Let $\B_p^n:=\{x\in\R^n:\|x\|_p\leq 1\}$ be the unit $\ell_p^n$-ball and $\SSS_p^{n-1}:=\{x\in\R^n:\|x\|_p=1\}$ be the unit $\ell_p^n$-sphere. We define the uniform distribution on $\B^n_p$ and cone probability measure on $\SSS^{n-1}_p$ as %
$$\bU_{n,p}(\,\cdot\,) := {\vol_n(\,\cdot\,)\over \vol_n(\B_p^n)}\qquad\text{and}\qquad\bC_{n,p}(\,\cdot\,) := {\vol_n(\{rx:r\in[0,1],x\in\,\cdot\,\})\over\vol_n(\B_p^n)}.$$
The following result is the basis of our probabilistic representations for random vectors with distribution $\bC_{n,p}$ and $\bU_{n,p}$ and is due to \cite{RachevRueschendorf} and \cite{SchechtmanZinn}. %
\begin{lemma}\label{lem:ProbRep}
Let $p \in [1, \infty)$, $Y=(Y_1,\ldots,Y_n)$ be a random vector in $\R^n$ with $Y_i \sim \bN_p$ i.i.d. \hspace{-0.2cm}, and $U$ an independent random variable uniformly distributed on $[0,1]$. Then,
	\begin{itemize}
	\item[i)]{ the random vector $  {Y}/{\|Y\|_p}$ has distribution $\bC_{n,p}$ and is independent of $\|Y\|_p,$}
	\vspace{0.25cm}
	\item[ii)]{ the random vector $ U^{1/n} \, {Y}/{\|Y\|_p}$ has distribution $\bU_{n,p}$.}
	\end{itemize}
\end{lemma}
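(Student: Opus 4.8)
The plan is to derive both statements from a single polar-type disintegration of Lebesgue measure adapted to the $\ell_p^n$-geometry, namely that for every integrable $F\colon\R^n\to\R$,
\begin{equation*}
\int_{\R^n} F(y)\dif y \;=\; \vol_n(\B_p^n)\int_{\SSS_p^{n-1}}\int_0^\infty F(r\theta)\,n\,r^{n-1}\dif r\dif\bC_{n,p}(\theta).
\end{equation*}
I expect establishing this identity to be the only genuinely geometric point requiring care. To prove it I would start from the definition of $\bC_{n,p}$ together with the $n$-homogeneity of $\vol_n$: for Borel $A\subseteq\SSS_p^{n-1}$ and $s>0$ one has $\vol_n(\{r\theta: r\in[0,s],\,\theta\in A\}) = s^n\,\vol_n(\B_p^n)\,\bC_{n,p}(A)$, whence for $0\le a<b$ one gets $\vol_n(\{r\theta: r\in[a,b],\,\theta\in A\}) = \vol_n(\B_p^n)\,\bC_{n,p}(A)\int_a^b n\,r^{n-1}\dif r$. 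This is exactly the displayed identity for $F=\mathbf 1_{\{r\theta:\,r\in[a,b],\,\theta\in A\}}$, and since such ``shells'' form an intersection-stable generator of $\mathcal B(\R^n\setminus\{0\})$, a monotone-class (Dynkin) argument extends it to indicators of all Borel sets, and then the usual approximation of integrable functions by simple functions extends it to all integrable $F$.

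Granting the identity, part~i) follows at once. Since the $Y_i$ are i.i.d.\ $\sim\bN_p$, the density of $Y$ is $f_Y(y)=\prod_{i=1}^n f_p(y_i)=c_n\,e^{-\|y\|_p^p/p}$ with $c_n=(2p^{1/p}\Gamma(1+1/p))^{-n}$, which depends on $y$ only through $\|y\|_p$; moreover $\|r\theta\|_p=r$ for $\theta\in\SSS_p^{n-1}$. Hence, for bounded measurable $g\colon\SSS_p^{n-1}\to\R$ and $h\colon(0,\infty)\to\R$, applying the identity to $F(y)=g(y/\|y\|_p)\,h(\|y\|_p)\,f_Y(y)$ gives
\begin{equation*}
\E\!\left[g\!\left(\tfrac{Y}{\|Y\|_p}\right)h(\|Y\|_p)\right] \;=\; \left(\int_{\SSS_p^{n-1}}g\,\dif\bC_{n,p}\right)\left(\vol_n(\B_p^n)\,c_n\,n\!\int_0^\infty h(r)\,e^{-r^p/p}\,r^{n-1}\dif r\right).
\end{equation*}
Taking $g\equiv 1$ identifies the second factor, as a functional of $h$, with $\E[h(\|Y\|_p)]$; then taking $h\equiv 1$ with a general $g$ gives $\E[g(Y/\|Y\|_p)]=\int_{\SSS_p^{n-1}}g\,\dif\bC_{n,p}$, i.e.\ $Y/\|Y\|_p\sim\bC_{n,p}$; and the product form $\E[g(Y/\|Y\|_p)h(\|Y\|_p)]=\E[g(Y/\|Y\|_p)]\,\E[h(\|Y\|_p)]$ is precisely the asserted independence of $Y/\|Y\|_p$ and $\|Y\|_p$.

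For part~ii), write $\Theta:=Y/\|Y\|_p$ and $R:=U^{1/n}$. Since $U$ is independent of $Y$, the pair $(\Theta,R)$ is independent; $\Theta\sim\bC_{n,p}$ by part~i); and $R$ has density $n\,r^{n-1}\mathbf 1_{(0,1)}(r)$ because $\Pro(R\le r)=\Pro(U\le r^n)=r^n$ for $r\in[0,1]$. Therefore, for bounded measurable $G\colon\R^n\to\R$,
\begin{equation*}
\E\!\left[G\!\left(U^{1/n}\tfrac{Y}{\|Y\|_p}\right)\right] \;=\; \int_0^1 n\,r^{n-1}\!\int_{\SSS_p^{n-1}}G(r\theta)\,\dif\bC_{n,p}(\theta)\,\dif r \;=\; \frac{1}{\vol_n(\B_p^n)}\int_{\B_p^n}G(y)\dif y,
\end{equation*}
where the last step is the disintegration identity applied to $F=G\,\mathbf 1_{\B_p^n}$ (equivalently, restricted to the radial range $r\in[0,1]$, using $\{r\theta: r\in[0,1],\,\theta\in\SSS_p^{n-1}\}=\B_p^n$). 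The right-hand side equals $\E_{\bU_{n,p}}[G]$, so $U^{1/n}Y/\|Y\|_p\sim\bU_{n,p}$, completing the proof. As noted, the only step I anticipate needing real care is the shell-wise computation of $\vol_n$ and its monotone-class extension to the disintegration identity; everything downstream is routine bookkeeping.
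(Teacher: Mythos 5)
The paper states Lemma~\ref{lem:ProbRep} without proof, attributing it to Rachev--R\"uschendorf \cite{RachevRueschendorf} and Schechtman--Zinn \cite{SchechtmanZinn}, so there is no internal proof to compare against. Your argument is correct and is essentially the standard one used in those sources: the $\ell_p$-polar disintegration $\int_{\R^n}F(y)\dif y=\vol_n(\B_p^n)\int_{\SSS_p^{n-1}}\int_0^\infty F(r\theta)\,n r^{n-1}\dif r\,\dif\bC_{n,p}(\theta)$ follows from the scaling identity $\vol_n(\{r\theta:r\in[0,s],\theta\in A\})=s^n\vol_n(\B_p^n)\bC_{n,p}(A)$ and a $\pi$-$\lambda$ extension over shells (a uniqueness-of-$\sigma$-finite-measures phrasing would be slightly tighter than ``Dynkin'' since the two measures are infinite, but this is routine and the shells do give a covering by finite pieces). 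The key structural observation---that the joint density of $Y$ is $c_n e^{-\|y\|_p^p/p}$, a function of $\|y\|_p$ alone, so that $F(r\theta)$ factors as (angular)$\times$(radial)---is exactly what makes the single computation deliver both the law of $Y/\|Y\|_p$ and its independence from $\|Y\|_p$, which is the content of part i). Part ii) then follows cleanly from the independence of $U$ and $Y$, the density $n r^{n-1}$ of $U^{1/n}$ on $(0,1)$, and the disintegration restricted to $r\in[0,1]$.
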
%
%
%
%
\subsection{LDPs for $q$-norms in $\ell_p^n$-balls}\label{subsec:LpLDPs}
Throughout this paper we assume $1 \le q < p < \infty$. The main variables of interest will be the $q$-norms of the random vectors $Z^{(n)}, \mathscr{Z}^{(n)} \in \B^n_p$ with $Z^{(n)} \sim \bC_{n,p}$ and $\mathscr{Z}^{(n)} \sim \bU_{n,p}$. Note, that we will always denote quantities related to $\mathscr{Z}^{(n)} \sim \bU_{n,p}$ cursively. To get non-trivial results, our target variables also need to be appropriately rescaled. Thus, for random vectors $Z^{(n)}, \mathscr{Z}^{(n)} \in \B^n_p$ with $Z^{(n)} \sim \bC_{n,p}$ and $\mathscr{Z}^{(n)} \sim \bU_{n,p}$, our target  variables will be  $n^{1/p-1/q} \, \|Z^{(n)}\|_q$ and $n^{1/p-1/q} \, \|\mathscr{Z}^{(n)}\|_q$, respectively. We set %
 $$\|Z\| := \left( n^{1/p-1/q} \, \|Z^{(n)}\|_q\right)_{n\in \N} \qquad \text{ and } \qquad \|\mathscr{Z}\| := \left( n^{1/p-1/q} \, \|\mathscr{Z}^{(n)}\|_q\right)_{n\in \N}.$$
It follows via the strong law of large numbers and the continuous mapping theorem applied to the probabilistic representations in \eqref{eq:ProbRepQNormCnp} and \eqref{eq:ProbRepQNormUnp} that the expectations of $\|Z\|$ and $\|\mathscr{Z}\|$ converge in $n \in \N$ to $m_{p,q}:=M_p(q)^{1/q}$.  For fixed $n \in \N$ we will denote 
$$
\E \left[n^{1/p-1/q} \, \|Z^{(n)}\|_q\right] := m_{n,p,q} \qquad \text{ and } \qquad \E \left[n^{1/p-1/q} \, \|\mathscr{Z}^{(n)}\|_q\right] := \mathscr{m}_{n,p,q} .
$$
Furthermore, LDPs for $\|Z\|$ and $\|\mathscr{Z}\|$ have been given in previous works, which we want to include here explicitly. But first, let us look at the following probabilistic representations of $\|Z\|$ and $\|\mathscr{Z}\|$, since the LDPs are given with respect to the c.g.f. \hspace{-0.15cm}of these representations: Let $(Y^{(n)})_{n \in\N}$ be a sequence of i.i.d. \hspace{-0.1cm}random vectors $Y^{(n)} := (Y^{(n)}_1, \ldots, Y^{(n)}_n)$ with $Y^{(n)}_i \sim \bN_p$, and $U$ a random variable, independent of the $Y^{(n)}_i$, and uniformly distributed on $[0,1]$. Then, we can see via Lemma \ref{lem:ProbRep} that 
%
\begin{equation}\label{eq:ProbRepQNormCnp}
n^{1/p-1/q} \, \|Z^{(n)}\|_q  \overset{d}{=} n^{1/p-1/q} \, \frac{\|Y^{(n)}\|_q}{\|Y^{(n)} \|_p} =  \frac{{\left( \frac{1}{n} \sum_{i=1}^n {|Y^{(n)}_i|}^{q}\right)}^{1/q}}{{\left( \frac{1}{n} \sum_{i=1}^n {|Y^{(n)}_i|}^{p}\right)}^{1/p}}, 
\end{equation}
%
and 
%
\begin{equation}\label{eq:ProbRepQNormUnp}
 n^{1/p-1/q} \, \|\mathscr{Z}^{(n)}\|_q  \overset{d}{=} n^{1/p-1/q} \, U^{1/n} \, \frac{\|Y^{(n)}\|_q}{\|Y^{(n)} \|_p} = U^{1/n} \,  \frac{{\left( \frac{1}{n} \sum_{i=1}^n {|Y^{(n)}_i|}^{q}\right)}^{1/q}}{{\left( \frac{1}{n} \sum_{i=1}^n {|Y^{(n)}_i|}^{p}\right)}^{1/p}}. 
\end{equation}
%
Define 
\begin{equation} \label{eq:Vn}
\hspace{-0.1cm} V^{(n)} :=  \left(V^{(n)}_1, \ldots, V^{(n)}_n \right) \in \R^{2n} \hspace{0.1cm} \qquad \text{ with } \qquad V^{(n)}_i := \left({|Y_i^{(n)}|}^q, {|Y_i^{(n)}|}^p\right), \, \, \quad
\end{equation}
and
\begin{equation}\label{eq:VnScr}
\quad \mathscr{V}^{(n)} := \left(\mathscr{V}^{(n)}_1, \ldots, \mathscr{V}^{(n)}_n\right) \in \R^{3n} \qquad \text{ with } \qquad \mathscr{V}^{(n)}_i := \left({|Y_i^{(n)}|}^q, {|Y_i^{(n)}|}^p, U^{1/n}\right).
\end{equation}
We denote the m.g.f. \hspace{-0.05cm}and c.g.f. \hspace{-0.05cm}of the $V^{(n)}_i$ as 
\begin{equation} \label{eq:MgfCgf}
 \displaystyle \varphi_p(\tau) := \int_\R e^{tau_1 |y|^q + \tau_2 |y|^p} f_p(y) \, \dint y \qquad \text{ and } \qquad  \Lambda_p(\tau) := \log \int_\R e^{\tau_1 |y|^q + \tau_2 |y|^p} f_p(y) \, \dint y,
\end{equation}
for $\tau=(\tau_1, \tau_2)\in \R^2$ and the Legendre-Fenchel transform of $\Lambda_p$ as 
$$\displaystyle \Lambda_p^*(x) := \sup_{\tau \in \R^2} \big[\langle x, \tau \rangle - \Lambda_p(\tau)\big], \qquad x\in \R^2.$$
Let $\mathcal{D}_p$ be the effective domain of $\Lambda_p$. Since $q<p$, for the integral in both $\varphi_p$ and $\Lambda_p$ to be finite, the sign of the dominant term in the exponent must be negative. Remembering the definition of $f_p$, one can see that this is given for $tau_2 < {1}/{p}$, thus $\mathcal{D}_p = \R \times \left(-\infty, 1/p\right)$. Now, we want to characterize the points $x \in \R^2$ for which there exists a $\tau(x) \in \mathcal{D}_p$, such that %
\begin{equation} \label{eq:ArgSupLegendre}
\Lambda_p^*(x) =  \langle x, \tau(x) \rangle - \Lambda_p(\tau(x)),
\end{equation}
i.e. for which the function $g_x(\tau):=  \langle x, \tau \rangle - \Lambda_p(\tau)$ does not attain its supremum at infinity. We will do this along the lines of \cite[Section 2]{AdrianiBaldi}. It holds that $\Lambda_p$ is convex in $\tau$ (see standard properties of the c.g.f. in e.g. \cite[Lemma 2.2.31]{DZ}), hence $g_x(\tau) := \langle x, \tau \rangle - \Lambda_p(\tau)$ is concave as a sum of concave functions. Then, for a given $x \in \R^2$, there either exists a $\tau(x) \in \R^2$ that satisfies $x = \nabla_\tau \,  \Lambda_p(\tau)$, i.e. that is a root of $\nabla_\tau \,  g_x(\tau)$, or the supremum of $g_x$ will be attained at infinity. If such a $\tau(x)$ exists and lies in $\mathcal{D}_p$, it holds that $\Lambda_p^*(x) =  \langle x, \tau(x) \rangle - \Lambda_p(\tau(x)) <\infty$ (see \cite[Lemma 2.2.31]{DZ}). Since the $V_i^{(n)}$ are not concentrated on a hyperplane (as $1 \le q < p < \infty$), the covariance matrix of their distribution given by $\mathcal{H}_{\tau} \, \Lambda_p(0,0)$ is positive definite and thereby invertible. $\mathcal{H}_{\tau} \, \Lambda_p(\tau)$ for $\tau \in \mathcal{D}_p$ can also be interpreted as the covariance matrix of an exponentially shifted distribution of $V_i^{(n)}$ (see \cite[p. 374]{AdrianiBaldi}), which, by the same argument, is also not concentrated on any hyperplane, hence the $\mathcal{H}_{\tau}\, \Lambda_p(\tau)$ are positive definite as well. This implies that $\Lambda_p(\tau)$ is strictly convex on $\mathcal{D}_p$, thereby also making $g_x$ strictly concave on $\mathcal{D}_p$. Hence, the strict concavity of $g_x$ then ensures that $\tau(x)$ is unique in $\mathcal{D}_p$ in the above described property. We denote the set of $x \in \R^2$ for which such a $\tau(x) \in \mathcal{D}_p$ exists as $\mathcal{J}_p$ and call it the admissible domain of $\Lambda_p^*$.
\begin{rmk} \label{rmk:LegendreBijection}
	Note that the admissible domain $\mathcal{J}_p$ is the image of $\mathcal{D}_p$ under the derivative of the c.g.f. $\Lambda_p$. It actually holds that $\nabla_\tau \Lambda_p(\tau)$ is a bijection from the interior of the effective domain of $\Lambda_p$ into the interior of the effective domain of $\Lambda_p^*$, by the properties of the Legendre transform (see \cite[Theorem 26.5]{Rockafellar}). Since $\mathcal{D}_p$ is open and $\nabla_\tau \Lambda_p$ continuous on $\mathcal{D}_p$, we thereby get that the effective domain of $\Lambda_p^*$ is also open and thus, $\mathcal{J}_p$ is simply the effective domain of $\Lambda_p^*$.
	\end{rmk}
For the sequence $\|Z\|$ the following LDP has already been shown by Kabluchko, Prochno and Thäle \cite[Section 5.1]{KPTLimitThm}: 
\begin{proposition} \label{prop:KPT-LDP-Cnp}
Let $1 \le q < p < \infty$ and $Z^{(n)} \sim \bC_{n,p}$ be a random vector in $\SSS^{n-1}_p$. Then  the sequence $\left( n^{1/p-1/q} \, \|Z^{(n)}\|_q\right)_{n\in \N}$ satisfies an LDP with speed $n$ and good rate function 
$$
\mathcal{I}_{\|Z\|}(z)  := \begin{cases}
\displaystyle \inf_{{\text{$t_1,t_2 >0$}}\atop{\text{$t_1^{1/q}t_2^{-1/p} = z$}}} \Lambda_p^*(t_1,t_2) &:  z > 0\\
+\infty&: z \le 0.
\end{cases}
$$
\end{proposition}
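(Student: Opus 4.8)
The plan is to build everything on the distributional identity \eqref{eq:ProbRepQNormCnp} coming from Lemma~\ref{lem:ProbRep}. Since an LDP is a statement about laws only, it suffices to establish it for the right-hand side of \eqref{eq:ProbRepQNormCnp}, which I would write as $F(W^{(n)})$, where
$$W^{(n)} := \frac{1}{n}\sum_{i=1}^n V_i^{(n)} = \Big(\tfrac1n\sum_{i=1}^n |Y_i^{(n)}|^{q},\ \tfrac1n\sum_{i=1}^n |Y_i^{(n)}|^{p}\Big)\in\R^2$$
with $V_i^{(n)} = (|Y_i^{(n)}|^q,|Y_i^{(n)}|^p)$ as in \eqref{eq:Vn}, and $F\colon(0,\infty)^2\to(0,\infty)$, $F(t_1,t_2):=t_1^{1/q}t_2^{-1/p}$. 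Thus the proof splits into (i) an LDP for the empirical mean $W^{(n)}$, and (ii) transporting it through the map $F$.

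For step (i): the $V_i^{(n)}$ are i.i.d.\ in $\R^2$ with c.g.f.\ $\Lambda_p$, and the origin lies in the interior of $\mathcal{D}_p=\R\times(-\infty,1/p)$, so Cram\'er's theorem in $\R^2$ (e.g.\ \cite[Corollary 6.1.6]{DZ}) gives that $(W^{(n)})_{n\in\N}$ satisfies an LDP with speed $n$ and good rate function $\Lambda_p^*$. For step (ii) I would invoke the contraction principle (e.g.\ \cite[Theorem 4.2.1]{DZ}). The one point requiring care is that $F$ is continuous only on the open quadrant $(0,\infty)^2$, not on all of $\R^2$, so the contraction principle does not apply verbatim. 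This is where I expect the (minor) main obstacle to be, and it is resolved as follows: since $W^{(n)}\in[0,\infty)^2$ almost surely, $\Lambda_p^*\equiv+\infty$ on $\R^2\setminus[0,\infty)^2$, and combined with the fact that $\mathcal{J}_p$ is exactly the (open) effective domain of $\Lambda_p^*$ (Remark~\ref{rmk:LegendreBijection}) this forces $\mathcal{J}_p\subseteq(0,\infty)^2$; alternatively, every point of $\mathcal{J}_p$ is the vector of means $(\E_\nu|Y|^q,\E_\nu|Y|^p)$ under the exponentially tilted probability measure $\nu$ with density proportional to $e^{\tau_1|y|^q+\tau_2|y|^p}f_p(y)$, whose full support makes both means strictly positive. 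Consequently the compact level sets $\{\Lambda_p^*\le\alpha\}$ are compact subsets of the open set $(0,\infty)^2$ on which $F$ is continuous, so one may replace $F$ by any globally continuous $\tilde F\colon\R^2\to\R$ that agrees with it on a neighbourhood of $\bigcup_{\alpha}\{\Lambda_p^*\le\alpha\}$ without changing anything. The contraction principle then yields an LDP for $(F(W^{(n)}))_{n\in\N}$ with speed $n$ and good rate function $z\mapsto\inf\{\Lambda_p^*(t_1,t_2):F(t_1,t_2)=z\}$, goodness being inherited automatically.

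It remains to identify this rate function with $\mathcal{I}_{\|Z\|}$. For $z\le0$ there is no $(t_1,t_2)$ in $\mathcal{J}_p\subseteq(0,\infty)^2$ with $t_1^{1/q}t_2^{-1/p}=z$, so the infimum is over the empty effective domain and equals $+\infty$. For $z>0$, since $\Lambda_p^*$ is $+\infty$ off $\mathcal{J}_p\subseteq(0,\infty)^2$, one may restrict the infimum to $t_1,t_2>0$ without changing its value, which gives precisely $\inf_{t_1,t_2>0,\ t_1^{1/q}t_2^{-1/p}=z}\Lambda_p^*(t_1,t_2)$. This matches the stated $\mathcal{I}_{\|Z\|}$, completing the proof; the whole argument runs in parallel to \cite[Section 5.1]{KPTLimitThm}, and apart from the quadrant/continuity bookkeeping above it is a routine chaining of Cram\'er's theorem with the contraction principle.
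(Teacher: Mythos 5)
The paper itself does not prove Proposition~\ref{prop:KPT-LDP-Cnp}; it simply cites \cite[Section 5.1]{KPTLimitThm}, where the result is obtained precisely by the route you describe, namely Cram\'er's theorem in $\R^2$ for the empirical mean $W^{(n)}$ of the i.i.d.\ vectors $V_i^{(n)}$, followed by the contraction principle through $F(t_1,t_2)=t_1^{1/q}t_2^{-1/p}$. Your reconstruction matches that approach, and you correctly isolate the one genuine subtlety, which is that $F$ is continuous only on the open quadrant rather than on all of $\R^2$.

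The only place I would push back is on the way you discharge that subtlety. You propose to replace $F$ by a globally continuous $\tilde F$ agreeing with $F$ on a neighbourhood of $\bigcup_\alpha\{\Lambda_p^*\le\alpha\}=\mathcal{J}_p$. But $\mathcal{J}_p$ is not compactly contained in $(0,\infty)^2$: as $\tau_2\to-\infty$ the tilted means $\nabla_\tau\Lambda_p(\tau)$ tend to the origin, so $\overline{\mathcal{J}_p}$ touches $\partial(0,\infty)^2$ and no $\R^2$-neighbourhood of $\mathcal{J}_p$ stays inside the open quadrant; the existence of the required continuous extension $\tilde F$ is therefore not automatic and would need a separate argument. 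The fix is cheap and avoids $\tilde F$ entirely: since $W^{(n)}\in(0,\infty)^2$ a.s.\ and $\{\Lambda_p^*<\infty\}=\mathcal{J}_p\subseteq(0,\infty)^2$ (your tilted-measure observation, or directly that $\Lambda_p^*=+\infty$ on $\partial[0,\infty)^2$ by sending $\tau_1\to-\infty$ or $\tau_2\to-\infty$), the Cram\'er LDP transfers verbatim to the open subspace $(0,\infty)^2$ -- for open sets the lower bound is immediate because $(0,\infty)^2$ is open in $\R^2$, and for $C$ closed in $(0,\infty)^2$ one has $\overline{C}^{\R^2}\setminus C\subseteq\partial(0,\infty)^2$ where $\Lambda_p^*\equiv+\infty$, so $\inf_{\overline{C}^{\R^2}}\Lambda_p^*=\inf_C\Lambda_p^*$. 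One then applies the contraction principle on the Polish space $(0,\infty)^2$ with the continuous map $F$ as is. Your identification of the resulting rate function with $\mathcal{I}_{\|Z\|}$ and the goodness of the rate function are then correct as written.
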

 In \cite[Lemma 2.1, Appendix A]{LiaoRamanan} Liao and Ramanan established a simplification of a similar rate function in a different setting. Their arguments can be analogously applied in our setting to derive the following result:
\begin{lemma}\label{lem:UniqueMinRF}
	Let $z > m_{p,q}$ such that $z^* := (z^q, 1) \in \mathcal{J}_p$. Then
	$$\displaystyle \mathcal{I}_{\|Z\|}(z)  = \inf_{{\text{$t_1,t_2 >0$}}\atop{\text{$t_1^{1/q}t_2^{-1/p} = z$}}}  \Lambda_p^*(t_1, t_2) = \Lambda_p^*(z^*),$$
	with $z^{*}$ being the unique point at which $\Lambda _{p}^{*}$ attains its infimum under the above conditions.
\end{lemma}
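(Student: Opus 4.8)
The plan is to follow the argument of Liao and Ramanan \cite[Lemma 2.1, Appendix A]{LiaoRamanan}, adapted to the present two-dimensional setting. Write $\Gamma := \{(t_1,t_2)\in\R^2 : t_1,t_2>0,\ t_1^{1/q}t_2^{-1/p}=z\}$ for the constraint curve; it suffices to show that $\Lambda_p^*|_\Gamma$ attains its infimum, and does so only at $z^*=(z^q,1)$. First I would secure existence of a minimizer. Since $(0,0)\in\mathcal{D}_p^\circ$, the map $\Lambda_p^*$ is a good rate function and hence has compact sublevel sets; moreover $\overline{\Gamma}=\Gamma\cup\{(0,0)\}$ and $\Lambda_p^*(0,0)=-\inf_\tau\Lambda_p(\tau)=+\infty$ (let $\tau_2\to-\infty$ in $\Lambda_p(0,\tau_2)$). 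Thus, setting $c:=\Lambda_p^*(z^*)<\infty$ (finite by the hypothesis $z^*\in\mathcal{J}_p$), the set $\{\Lambda_p^*\le c\}\cap\Gamma$ is compact and nonempty and lies in the open set $\mathcal{J}_p$, on which $\Lambda_p^*$ is finite and (being convex) continuous; hence $\inf_\Gamma\Lambda_p^*$ is attained at some $x^\circ=(t_1^\circ,t_2^\circ)\in\mathcal{J}_p$.

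Next I would extract the first-order condition. On $\mathcal{J}_p$ the map $x\mapsto\Lambda_p^*(x)=\langle x,\tau(x)\rangle-\Lambda_p(\tau(x))$ is smooth with $\nabla_x\Lambda_p^*(x)=\tau(x)$, where $\tau(x)\in\mathcal{D}_p$ is the unique root of $\nabla_\tau\Lambda_p(\tau)=x$ (cf.\ Remark \ref{rmk:LegendreBijection}). Taking logarithms, the constraint reads $G(t_1,t_2):=\tfrac1q\log t_1-\tfrac1p\log t_2-\log z=0$, with $\nabla G=(1/(qt_1),\,-1/(pt_2))\neq 0$ on $\{t_1,t_2>0\}$, so the Lagrange condition $\nabla\Lambda_p^*(x^\circ)=\mu\,\nabla G(x^\circ)$ holds for some $\mu\in\R$; writing $\tau^\circ:=\tau(x^\circ)$ this gives
\[
q\,t_1^\circ\,\tau_1^\circ + p\,t_2^\circ\,\tau_2^\circ = \mu-\mu = 0 .
\]

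The decisive ingredient — and the step I expect to demand the most care — is an exact identity for $\Lambda_p$ exploiting the precise form $f_p(y)\propto e^{-|y|^p/p}$. Differentiating $\int_\R y\,\exp(\tau_1|y|^q+\tau_2|y|^p-|y|^p/p)\dint y$ under the integral sign (legitimate for $\tau\in\mathcal{D}_p$, where all boundary terms vanish by the super-exponential decay) yields, for every $\tau\in\mathcal{D}_p$,
\[
q\tau_1\,\partial_{\tau_1}\Lambda_p(\tau) + p\tau_2\,\partial_{\tau_2}\Lambda_p(\tau) = \partial_{\tau_2}\Lambda_p(\tau) - 1 ,
\]
equivalently $q\,t_1\tau_1 + p\,t_2\tau_2 = t_2-1$ for $(t_1,t_2)=\nabla_\tau\Lambda_p(\tau)$; probabilistically this reads $\E_\tau[q\tau_1|Y|^q+p\tau_2|Y|^p]=\E_\tau|Y|^p-1$, which at a critical point says the exponentially tilted law still has $p$-th moment $1$. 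Evaluating this identity at $x^\circ$ and subtracting the Lagrange relation gives $t_2^\circ-1=0$, whence the constraint forces $t_1^\circ=z^q$, so $x^\circ=z^*$. Since every minimizer must satisfy the same system (Lagrange relation, identity, constraint), whose only solution in $\{t_1,t_2>0\}$ is $z^*$, the minimizer is unique; thus $\mathcal{I}_{\|Z\|}(z)=\inf_\Gamma\Lambda_p^*=\Lambda_p^*(z^*)$, attained only at $z^*$. As an aside, the same identity furnishes, along the parametrization $t_1=s$, $t_2=\psi(s):=s^{p/q}z^{-p}$ of $\Gamma$, the explicit derivative $\frac{\dint}{\dint s}\Lambda_p^*(s,\psi(s))=(\psi(s)-1)/(qs)$, negative for $s<z^q$ and positive for $s>z^q$ — a direct route to strict unimodality, which would however require separately arguing connectedness of $\Gamma\cap\mathcal{J}_p$, something the compactness argument above sidesteps.
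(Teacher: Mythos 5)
Your proof is correct and reaches the same decisive identity as the paper, but you arrive at it and deploy it by a genuinely different route. The paper reduces the constrained problem to a one-variable problem by parametrizing the curve as $t_z=(z^q\tilde t_2^q,\tilde t_2^p)$, derives the identity
$\partial_{s_2}\Lambda_p(s)=(1-ps_2)^{-1}+qs_1(1-ps_2)^{-1}\partial_{s_1}\Lambda_p(s)$
via the change of variables $x=(1-ps_2)^{1/p}y$, and then sets $\frac{\dint}{\dint t}\Lambda_p^*(t_z)=0$. You instead work with a Lagrange multiplier for the logarithmic constraint $G$, and you obtain the (equivalent) identity $q\tau_1 t_1+p\tau_2 t_2=t_2-1$ by integrating $\frac{\dint}{\dint y}\big[y\,e^{\tau_1|y|^q+\tau_2|y|^p-|y|^p/p}\big]$ over $\R$ — a cleaner, more probabilistic derivation that makes the ``tilted $p$-th moment equals one'' interpretation transparent. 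The two conditions then coincide (divide the paper's $z^qqt^{q-1}\tau_1+pt^{p-1}\tau_2=0$ by $t$ to recover your $qt_1\tau_1+pt_2\tau_2=0$), and both routes force $t_2=1$. What your version adds is an explicit existence argument: you use goodness of the rate function, $\Lambda_p^*(0,0)=+\infty$, and $\overline{\Gamma}=\Gamma\cup\{(0,0)\}$ to show a minimizer exists in $\mathcal{J}_p$ before extracting first-order conditions. The paper's closing appeal to ``strict convexity of $\Lambda_p^*$'' does not literally apply on the non-convex curve $\partial D_z$; your argument — every minimizer is an interior regular critical point, and the critical system has a unique solution — is the tighter way to settle both existence and uniqueness. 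Your aside on strict unimodality via $\frac{\dint}{\dint s}\Lambda_p^*(s,\psi(s))=(\psi(s)-1)/(qs)$ is also correct and is, in essence, what underlies the paper's one-dimensional computation, modulo the connectedness caveat you correctly flag.
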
 
To keep this paper self-contained, we will present the analogous proof of this in the Appendix. %
For the sequence $\|\mathscr{Z}\|$, the following LDP was also provided by Kabluchko, Prochno and Thäle in \cite[Theorem 1.2]{KPTLimitThm}:%
\begin{proposition} \label{prop:KPT-LDP-Unp}
Let $1 \le q < p < \infty$ and $\mathscr{Z}^{(n)} \sim \bU_{n,p}$ be a random vector in $\B^n_p$. Then  the sequence $\left( n^{1/p-1/q} \, \|\mathscr{Z}^{(n)}\|_q\right)_{n \in \N}$ satisfies an LDP with speed $n$ and good rate function 
$$
\mathcal{I}_{\|\mathscr{Z}\|}(z) := \begin{cases}
\displaystyle \inf_{{\text{$z = z_1 z_2$}}\atop{\text{$z_1, z_2 >0$}}} \big[ \mathcal{I}_{\|Z\|}(z_1) + \mathcal{I}_U(z_2)\big] &:  z > 0\\
+\infty&: z\le0,
\end{cases}
$$
with $\mathcal{I}_{\|Z\|}$ as in Proposition \ref{prop:KPT-LDP-Cnp} and 
$$\mathcal{I}_{U}(z_2) := \begin{cases}
\displaystyle - \log(z_2) &:  z_2 \in (0,1]\\
+\infty&: otherwise.
\end{cases}
$$
\end{proposition}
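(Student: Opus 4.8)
The plan is to read off the LDP directly from the probabilistic representation \eqref{eq:ProbRepQNormUnp}. Writing $A_n := U^{1/n}$ and $B_n := n^{1/p-1/q}\,\|Z^{(n)}\|_q$ for the rescaled cone-measure $q$-norm appearing in \eqref{eq:ProbRepQNormCnp}, that representation says precisely that $n^{1/p-1/q}\,\|\mathscr{Z}^{(n)}\|_q \overset{d}{=} A_n B_n$, where $A_n$ and $B_n$ are \emph{independent}. So it suffices to establish the LDP for the product sequence $(A_n B_n)_{n\in\N}$ and then transfer it via equality in distribution. I would first record the (elementary) LDP for $(A_n)_{n\in\N}$: since $U\sim\Uni[0,1]$ we have $A_n\in(0,1]$ almost surely and $\Pro(A_n\le z_2)=\Pro(U\le z_2^n)=z_2^n$ for $z_2\in(0,1]$, so $\tfrac1n\log\Pro(A_n\le z_2)\to\log z_2$; a routine check on open and closed sets upgrades this to a full LDP at speed $n$ with rate function $\mathcal{I}_U$ as in the statement. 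Note $\mathcal{I}_U$ is a \emph{good} rate function, its sublevel sets being $\{\mathcal{I}_U\le\alpha\}=[e^{-\alpha},1]$; and $\mathcal{I}_{\|Z\|}$ from Proposition \ref{prop:KPT-LDP-Cnp} is good by hypothesis.

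Next I would combine the two one-dimensional LDPs. Since $(A_n)$ and $(B_n)$ are independent and each satisfies an LDP at speed $n$ with a good rate function, the pair $(B_n,A_n)_{n\in\N}$ satisfies an LDP in $\R^2$ at speed $n$ with the good rate function $(z_1,z_2)\mapsto\mathcal{I}_{\|Z\|}(z_1)+\mathcal{I}_U(z_2)$; this is the standard LDP for product measures, see e.g. \cite[Exercise 4.2.7]{DZ}. Applying the contraction principle to the continuous multiplication map $m:\R^2\to\R$, $m(z_1,z_2):=z_1 z_2$, we conclude that $(A_n B_n)_{n\in\N}=(m(B_n,A_n))_{n\in\N}$ satisfies an LDP at speed $n$ with good rate function
\[
z\;\longmapsto\;\inf_{z_1 z_2 = z}\bigl[\mathcal{I}_{\|Z\|}(z_1)+\mathcal{I}_U(z_2)\bigr].
\]

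It remains to check that this rate function equals $\mathcal{I}_{\|\mathscr{Z}\|}$ as stated. For $z>0$, both summands are $+\infty$ outside $\{z_1>0\}$ and $\{z_2\in(0,1]\}$ respectively, so the infimum is effectively over $z_1,z_2>0$ with $z_1 z_2=z$ (the constraint $z_2\le1$ being automatically enforced by $\mathcal{I}_U$), giving exactly the displayed formula. For $z\le0$ there is no factorization $z=z_1 z_2$ with both factors positive, so the effective feasible set is empty and the infimum is $+\infty$. Finally, \eqref{eq:ProbRepQNormUnp} identifies the law of $A_n B_n$ with that of $n^{1/p-1/q}\,\|\mathscr{Z}^{(n)}\|_q$, so the LDP transfers. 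I do not expect a genuine obstacle: the argument is a bookkeeping exercise built on Proposition \ref{prop:KPT-LDP-Cnp}, the trivial LDP for $U^{1/n}$, independence, and the contraction principle; the only points needing a little care are verifying goodness of all the rate functions involved (needed both for the product-space LDP and to keep the contracted rate function good) and tracking how the infinite values of $\mathcal{I}_U$ and $\mathcal{I}_{\|Z\|}$ collapse the infimum onto $\{z_1,z_2>0\}$.
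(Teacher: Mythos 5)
Your proof is correct; note that the paper does not actually prove Proposition~\ref{prop:KPT-LDP-Unp} but simply quotes \cite[Theorem~1.2]{KPTLimitThm}, and the route you outline (independent factorization $n^{1/p-1/q}\|\mathscr{Z}^{(n)}\|_q \overset{d}{=} U^{1/n}\cdot n^{1/p-1/q}\|Y^{(n)}\|_q/\|Y^{(n)}\|_p$ from Lemma~\ref{lem:ProbRep}, an elementary LDP for $U^{1/n}$, the product LDP for independent sequences with good rate functions, then contraction along $(z_1,z_2)\mapsto z_1z_2$) is precisely the argument of that reference. The only step worth spelling out a bit more is the upgrade of $\tfrac1n\log\Pro(U^{1/n}\le z_2)\to\log z_2$ to a genuine two-sided LDP; the cleanest way is to observe that $-\tfrac1n\log U$ is exponential with mean $1/n$, hence satisfies an LDP on $[0,\infty)$ with good rate function $x\mapsto x$, and contracting along the continuous map $x\mapsto e^{-x}$ yields exactly $\mathcal{I}_U$ together with its goodness.
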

We again show that the above infimum is attained at a unique point satisfying the infimum condition. 
 \begin{lemma}\label{lem:UniqueMinRFUnp}
 Assume the same setting as in Proposition \ref{prop:KPT-LDP-Unp}. For $z> m_{p,q}$, we can simplify the rate function by combining the two infimum operations to get 
 $$ \mathcal{I}_{\|\mathscr{Z}\|}(z) =  \displaystyle \inf_{{\text{$z = t_1^{1/q} t_2^{-1/p}t_3$}}\atop{\text{$t_1,t_2 >0, t_3 \in (0,1]$}}} \big[ \Lambda_p^*(t_1, t_2) -  \log(t_3)\big].$$
We define 
$$\mathcal{I}_{\mathscr{S}}(t) := \big[ \Lambda_p^*(t_1, t_2) -  \log(t_3)\big], \qquad t_1, t_2 \in \R, \quad t_3 \in (0,1],$$
and set $z^* := (z^q,1) \in \R^2$,  $z^{**} := (z^q, 1, 1) \in \R^3$. It then holds for $ z > m_{p,q}$ with $z^{*} \in  \mathcal{J}_p$ that 
$$\displaystyle \mathcal{I}_{\|\mathscr{Z}\|}(z)  =  \mathcal{I}_{\mathscr{S}}(z^{**}) = \Lambda_p^*(z^*),$$
with $z^{**}$ being the unique point at which $\mathcal{I}_{\mathscr{S}}$ attains its infimum under the above conditions.
\end{lemma}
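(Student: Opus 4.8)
The plan is to reduce the two‑stage infimum in Proposition~\ref{prop:KPT-LDP-Unp} to the one already understood in Lemma~\ref{lem:UniqueMinRF}, exploiting that $\mathcal{I}_U$ is attained at $z_2 = 1$ in the relevant regime. First I would rewrite $\mathcal{I}_{\|\mathscr{Z}\|}(z)$ by inserting the variational formula for $\mathcal{I}_{\|Z\|}(z_1)$ from Proposition~\ref{prop:KPT-LDP-Cnp}: for $z_1 > 0$ we have $\mathcal{I}_{\|Z\|}(z_1) = \inf\{\Lambda_p^*(t_1,t_2) : t_1,t_2 > 0,\ t_1^{1/q} t_2^{-1/p} = z_1\}$, and since the outer infimum runs over $z = z_1 z_2$ with $z_2 \in (0,1]$, substituting $z_1 = t_1^{1/q}t_2^{-1/p}$ and $t_3 = z_2$ gives exactly the single combined infimum $\mathcal{I}_{\|\mathscr{Z}\|}(z) = \inf\{\Lambda_p^*(t_1,t_2) - \log t_3 : t_1,t_2 > 0,\ t_3 \in (0,1],\ z = t_1^{1/q}t_2^{-1/p} t_3\}$, which is the first displayed claim. (One should check the infima interchange is legitimate, i.e.\ that both are genuine infima over the stated constraint sets; this is immediate since all functions involved are finite and the constraint sets are nonempty.)

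Next I would show the infimum is attained at $t_3 = 1$. Fix any admissible $(t_1,t_2,t_3)$ with $t_3 < 1$; since $-\log t_3 > 0$, replacing $t_3$ by $1$ and rescaling, say, $t_1 \mapsto t_1 t_3^{-q}$ (so that $t_1^{1/q}t_2^{-1/p}t_3$ is preserved) changes the objective by $-\log t_3$ plus the change in $\Lambda_p^*$. The cleaner route is to argue monotonicity: for the sphere problem, Lemma~\ref{lem:UniqueMinRF} tells us that along the curve $\{t_1^{1/q}t_2^{-1/p} = z_1\}$ the infimum of $\Lambda_p^*$ equals $\mathcal{I}_{\|Z\|}(z_1)$, so the combined infimum collapses to $\inf\{\mathcal{I}_{\|Z\|}(z/t_3) - \log t_3 : t_3 \in (0,1]\}$. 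I would then invoke that $\mathcal{I}_{\|Z\|}$ is a good rate function with $\mathcal{I}_{\|Z\|}(m_{p,q}) = 0$ (the LDP minimum) and that $\mathcal{I}_{\|Z\|}$ is nondecreasing on $[m_{p,q},\infty)$ — which follows from convexity of $\mathcal{I}_{\|Z\|}$ (it is an infimal projection of the convex function $\Lambda_p^*$) together with its vanishing at $m_{p,q}$. Since $z > m_{p,q}$, for $t_3 \in (0,1]$ we have $z/t_3 \ge z > m_{p,q}$, so $\mathcal{I}_{\|Z\|}(z/t_3)$ is nondecreasing as $t_3$ decreases, while $-\log t_3 \ge 0$ is also nondecreasing as $t_3$ decreases; hence the sum is minimized at $t_3 = 1$, giving $\mathcal{I}_{\|\mathscr{Z}\|}(z) = \mathcal{I}_{\|Z\|}(z) = \Lambda_p^*(z^*)$, where the last equality is Lemma~\ref{lem:UniqueMinRF} under the hypothesis $z^* \in \mathcal{J}_p$.

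Finally, for uniqueness of the minimizer $z^{**} = (z^q, 1, 1)$: suppose $(t_1,t_2,t_3)$ also attains the infimum. The argument above forces $t_3 = 1$ (strict monotonicity of $-\log t_3$ at any $t_3 < 1$ combined with $\mathcal{I}_{\|Z\|}(z/t_3) \ge \mathcal{I}_{\|Z\|}(z)$ gives strict increase unless $t_3 = 1$), and then $(t_1,t_2)$ must attain $\inf\{\Lambda_p^*(t_1,t_2): t_1^{1/q}t_2^{-1/p} = z\}$, which by Lemma~\ref{lem:UniqueMinRF} has the unique minimizer $z^* = (z^q, 1)$ — here is where $z^* \in \mathcal{J}_p$ and the strict convexity of $\Lambda_p^*$ on its effective domain (established in the discussion preceding Remark~\ref{rmk:LegendreBijection}) are used. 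Hence $(t_1,t_2,t_3) = (z^q, 1, 1) = z^{**}$, and $\mathcal{I}_{\mathscr{S}}(z^{**}) = \Lambda_p^*(z^q,1) - \log 1 = \Lambda_p^*(z^*)$.

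The main obstacle I anticipate is establishing the monotonicity/convexity properties of $\mathcal{I}_{\|Z\|}$ cleanly enough to justify that the minimum over $t_3$ sits at the boundary $t_3 = 1$ — in particular verifying that $\mathcal{I}_{\|Z\|}$ is nondecreasing to the right of $m_{p,q}$ and that this monotonicity is strict where needed for uniqueness. This can be handled either by direct appeal to convexity of $\Lambda_p^*$ propagated through the infimal projection, or, following Liao and Ramanan's approach referenced for Lemma~\ref{lem:UniqueMinRF}, by analyzing the Lagrange/KKT conditions for the combined optimization problem directly; I would adopt whichever is used there and present it in the Appendix alongside the proof of Lemma~\ref{lem:UniqueMinRF}.
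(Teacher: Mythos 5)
Your proposal is correct and follows essentially the same route as the paper: expand $\mathcal{I}_{\|Z\|}(z_1)$ via its variational formula and Lemma~\ref{lem:UniqueMinRF} to collapse the double infimum into the single one over $(t_1,t_2,t_3)$, reduce to the one-dimensional optimization $\inf_{t_3\in(0,1]}[\mathcal{I}_{\|Z\|}(z/t_3)-\log t_3]$, and conclude the minimizer sits at $t_3=1$ by monotonicity (both $\mathcal{I}_{\|Z\|}(z/t_3)$ and $-\log t_3$ increase as $t_3$ decreases), with uniqueness from the strict increase of $-\log t_3$ together with the uniqueness of $z^*$ in Lemma~\ref{lem:UniqueMinRF}. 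One small caution: your parenthetical justification of monotonicity via ``$\mathcal{I}_{\|Z\|}$ is an infimal projection of the convex $\Lambda_p^*$'' is not quite right, since the constraint $t_1^{1/q}t_2^{-1/p}=z_1$ is nonlinear; the paper instead obtains strict monotonicity on $(m_{p,q},\infty)$ from the positive-definite Hessian of $\Lambda_p^*$ (via Lemma~\ref{lem:Ableitungen}) applied along the curve $z\mapsto(z^q,1)$, together with the rate-function root at $m_{p,q}$ — an argument you should substitute for the infimal-projection claim.
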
 
Thus, for $z > m_{p,q}$ with $z^* \in \mathcal{J}_p$ both $\|Z\|$ and $\|\mathscr{Z}\|$ satisfy LDPs with the same speed and rate function. Again, the proof of this is relegated to the Appendix.
\begin{rmk}
Note that in the results within this paper, deviations from the "limit expectation" $m_{p,q}$ are considered, even though the sequences $\|Z\|$ and $\|\mathscr{Z}\|$ have respective expectations $m_{n,p,q}$ and $\mathscr{m}_{n,p,q}$, that only converge to $m_{p,q}$ in $n\in \N$. This, however, is not an issue for our results. As seen in \eqref{eq:ProbRepQNormCnp} and \eqref{eq:ProbRepQNormUnp}, the sequences are represented via the empirical averages of probabilistic representations seen in \eqref{eq:Vn} and \eqref{eq:VnScr}. The expectations of these representations only ever play a role in our proofs regarding the behaviour of the corresponding c.g.f.s, specifically only in the case of $\|Z\|$ (e.g. in the proofs of Lemma \ref{lem:UniqueMinRFUnp} and Lemma \ref{lem:UniqueMinCl} or implicitly in the proof of the density approximations in Section \ref{sec:JointDensityEstimate}). As the $V_i^{(n)}$ in \eqref{eq:Vn} are i.i.d., they all share the same c.g.f. as given in \eqref{eq:MgfCgf} and the same expectation $\E[V_i^{(n)}] = (M_p(q), M_p(p)) = (m_{p,q}^q, 1)$. Hence, the fact that the expectation $m_{n,p,q}$ only converges to $m_{p,q}$ does not affect our proofs.  This is in keeping with classical results from large deviations theory like the Theorem of Gärtner-Ellis (see \cite[Theorem V.6]{dH}), where an arbitrary (i.e. not necessarily i.i.d.) sequence of random variables is not required to have a shared expectation, but rather that the sequence of the (appropriately rescaled) c.g.f.s of the individual random variables in the sequence converge to a fixed function with the origin in the interior of its effective domain. The resulting LDP then considers deviation probabilities from the limiting expectation as well. In the case of $\|\mathscr{Z}\|$ the c.g.f.s of the $\mathscr{V}_i^{(n)}$ are not employed at all (neither themselves nor their limit in $n$). Instead, we simply use the density approximation in Proposition \ref{prop:DensityEstimate} for the sum of the $V_i^{(n)}$ and make use of the independence of $U^{1/n}$ from the coordinates of $V_i^{(n)}$. Since our main results assume $n \in \N$ to be sufficiently large (that is, large enough for the local density approximations in Section \ref{sec:JointDensityEstimate} to hold), this effectively means that for $n\in\N$ sufficiently large, the difference of $m_{p,q}$ and $m_{n,p,q}, \mathscr{m}_{n,p,q}$ is of order at most $o(1)$ and therefore does not affect our SLD estimates. %
\end{rmk}
%
%
%
%
\subsection{A few remarks on Weingarten maps and curvature}\label{subsec:Weingarten}
As outlined in the introduction, we will finish the proof of our first main result in Theorem \ref{thm:MainResult1} by integrating over a previously established density estimate via a result of Adriani and Baldi \cite{AdrianiBaldi} for Laplacian integral expansions. This result has a heavily geometric flavour and relies on the Weingarten maps of certain hypersurfaces, which in our case are simply curves in $\R^2$. We will therefore just give a brief reminder of the Weingarten map in this setting, recall some of its properties, and refer to the relevant literature (e.g. \cite{Hicks, Klingenberg}) or Adriani and Baldi \cite {AdrianiBaldi} for a more in-depth discussion of the topic.\\ 
\\
In general, the Weingarten map of a smooth hypersurface $M \subset \R^d$ at a point $p \in M$ is an endomorphism of the tangent space $T_pM$ at $p$, mapping any $y \in T_pM$ to the directional derivative of a normal field of $M$ in $p$ in the direction of $y$.  However, as remarked in \cite[Example 4.3]{AdrianiBaldi}, for $d=2$, hypersurfaces simplify to planar curves and the Weingarten map at a point $p$ simplifies to the absolute value of the curvature $K(p)$ of the curve at $p$. For implicit curves, i.e. curves given as the zero set of a function, we have the following formula for its curvature from \cite[Proposition 3.1]{Goldman2005}: 
 \begin{lemma} \label{lem:ImplCurves}
Let $F: \R^2 \to \R$ be a smooth function. For a curve $\mathscr{C}:= \{x \in \R^2: F(x) =0\}$ given as the zero set of $F$, and a point $p \in \mathscr{C}$, where $\nabla_{x} F(p) \ne 0$, it then holds that
 $$K(p) = \displaystyle \frac{ \left(-F_{[0,1]}, F_{[1,0]} \right) \left(\begin{array}{rrr} 
F_{[2,0]} & F_{[1,1]} \\ 
F_{[1,1]} & F_{[0,2]}  \\ 
\end{array}\right) 
\left(-F_{[0,1]}, F_{[1,0]}\right)
}{{\left({F_{[1,0]}}^2 + {F_{[0,1]}}^2\right)}^{3/2}},$$
with derivatives $F_{[i,j]} = F_{[i,j]}(p)$ as in \eqref{eq:AblNotation}.
 \end{lemma}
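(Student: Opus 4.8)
The plan is to reduce the statement to the classical curvature formula for a planar curve, evaluated through a local regular parametrization of $\mathscr{C}$ that the hypothesis $\nabla_x F(p)\ne 0$ supplies via the implicit function theorem. By continuity $\nabla_x F$ is nonzero on a relative neighbourhood of $p$ in $\mathscr{C}$, so there is a smooth regular parametrization $\gamma=(x,y)\colon(-\eps,\eps)\to\mathscr{C}$ with $\gamma(0)=p$ (for instance one obtained by writing $\mathscr{C}$ locally as a coordinate graph). Since, as recalled above, the Weingarten map of a curve equals the absolute curvature, it suffices to show that the quantity
\begin{equation*}
\frac{\big|x'y''-y'x''\big|}{\big((x')^2+(y')^2\big)^{3/2}}\bigg|_{t=0}
\end{equation*}
coincides with the modulus of the right-hand side of the asserted identity.

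First I would differentiate the identity $F(\gamma(t))\equiv 0$ once, obtaining $F_{[1,0]}\,x'+F_{[0,1]}\,y'=0$ along $\gamma$; hence $\gamma'$ is everywhere proportional to $\big(-F_{[0,1]}(\gamma),F_{[1,0]}(\gamma)\big)$, say $\gamma'=\lambda\cdot\big(-F_{[0,1]}(\gamma),F_{[1,0]}(\gamma)\big)$ with $\lambda$ a smooth nowhere-vanishing scalar function. Differentiating the identity a second time gives
\begin{equation*}
F_{[2,0]}\,(x')^2+2F_{[1,1]}\,x'y'+F_{[0,2]}\,(y')^2+F_{[1,0]}\,x''+F_{[0,1]}\,y''=0,
\end{equation*}
that is, $F_{[1,0]}\,x''+F_{[0,1]}\,y''=-\,\mathbf{q}(\gamma')$, where $\mathbf{q}(v):=v^{T}\mathcal{H}_x F(p)\,v$ is the Hessian quadratic form appearing in the numerator of the claimed formula.

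Then I would substitute $x'=-\lambda F_{[0,1]}$, $y'=\lambda F_{[1,0]}$ into $x'y''-y'x''$ and use the second-order identity: this yields $x'y''-y'x''=-\lambda\big(F_{[1,0]}\,x''+F_{[0,1]}\,y''\big)=\lambda\,\mathbf{q}(\gamma')=\lambda^{3}\,\mathbf{q}\big((-F_{[0,1]},F_{[1,0]})\big)$, while $(x')^2+(y')^2=\lambda^{2}\big(F_{[1,0]}^2+F_{[0,1]}^2\big)$. Inserting both expressions into the curvature formula, the factors of $\lambda$ cancel up to $\sgn(\lambda)^3$, and evaluating at $t=0$ (where $\gamma=p$) produces precisely the stated formula, the sign being fixed by the orientation convention of \cite{Goldman2005} and immaterial for us since only $|K(p)|$ is used later. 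An equivalent and even more elementary route avoids the scalar $\lambda$: when $F_{[0,1]}(p)\ne 0$ write $y=\phi(x)$ as a graph (the roles of the two coordinates being exchanged otherwise, under which exchange both the numerator and the denominator of the asserted identity are invariant), compute $\phi'=-F_{[1,0]}/F_{[0,1]}$ and, differentiating once more, $\phi''=-\,\mathbf{q}\big((-F_{[0,1]},F_{[1,0]})\big)/F_{[0,1]}^{3}$, then substitute into $|K|=|\phi''|/(1+(\phi')^2)^{3/2}$, whereupon the powers of $F_{[0,1]}$ cancel.

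I do not expect a genuine obstacle here: this is an elementary computation, identical in content to \cite[Proposition 3.1]{Goldman2005}. The only points deserving care are invoking the implicit function theorem to guarantee a regular local parametrization — which is exactly the purpose of the hypothesis $\nabla_x F(p)\ne 0$ — and keeping track of the orientation-dependent sign, which is irrelevant since only the modulus, i.e.\ the Weingarten map, enters Section~\ref{sec:ProofMainResult1}.
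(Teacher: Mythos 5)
Your derivation is correct, and it is also more than the paper itself offers: the paper does not prove this lemma but simply cites \cite[Proposition 3.1]{Goldman2005}. Your argument — differentiate $F\circ\gamma\equiv 0$ twice along an arbitrary regular local parametrization, observe that $\gamma'$ is proportional to $(-F_{[0,1]},F_{[1,0]})$, use the second-order identity $F_{[1,0]}x''+F_{[0,1]}y''=-\gamma'^{T}\mathcal{H}_xF\,\gamma'$ to eliminate the second derivatives of the parametrization from $x'y''-y'x''$, and watch the powers of the proportionality factor $\lambda$ cancel in $|x'y''-y'x''|/((x')^2+(y')^2)^{3/2}$ — is exactly the standard and clean way to recover the implicit curvature formula, and the alternate graph-based route you sketch (compute $\phi'=-F_{[1,0]}/F_{[0,1]}$, $\phi''=-\mathbf{q}/F_{[0,1]}^3$, then cancel $F_{[0,1]}^3$) is an equivalent specialization obtained by fixing $\lambda=1/F_{[0,1]}$. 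Both checks I ran through confirm the algebra. You are also right to flag that the displayed formula in the lemma is the \emph{signed} curvature while the Weingarten map used downstream in Section~6 is its absolute value, so the residual sign ambiguity from $\sgn(\lambda)^3$ is harmless; this matches the way the paper later inserts explicit absolute values into $L_D$ and $L_\Lambda$. The only substantive difference from the paper is that you supply a self-contained proof where the paper defers to a reference.
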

\begin{rmk} \label{rmk:Curvature}\text{}
\begin{itemize}%
\item[i)]{Given the set-up of the previous Lemma, straightforward calculation of the above fraction gives that
$$K(p) = \displaystyle \frac{{F_{[0,1]}}^2{F_{[2,0]}} - 2{F_{[0,1]}}{F_{[1,0]}}{F_{[1,1]}} + {F_{[1,0]}}^2{F_{[0,2]}}}{{\left({F_{[1,0]}}^2 + {F_{[0,1]}}^2\right)}^{3/2}}.$$ }
\item[ii)]{In case that $\mathscr{C}$ is the graph of a smooth function $f:\R \to \R$, i.e. $\mathscr{C} = \{(x_1, x_2) \in \R^2: x_2 = f(x_1)\}$, and $p=(x, f(x))$, the above reduces to 
$$K(p) = \displaystyle \frac{|f^{\prime\prime}(x)|}{{\big(1 + f^\prime(x)^2\big)}^{3/2}}.$$}
\end{itemize}
\end{rmk}
\text{}
%
%
%
%
%
\section{Main Results} \label{sec:MainResults}
Using the concepts and notation established in the previous section, we now proceed to present our main results and their applications:
\subsection{Sharp asymptotics for $q$-norms of random vectors in $\SSS_p^{n-1}$ and $\B_p^n$} \label{subsec:MainResults}
For $Z^{(n)} \sim \bC_{n,p}$, we want to give sharp asymptotics for the probability $\Pro\left(n^{1/p-1/q} \, \|Z^{(n)}\|_q> z\right)$ for $z > m_{p,q}$ such that $z^* \in \mathcal{J}_p$, with $z^*$ as  defined in Lemma \ref{lem:UniqueMinRF}. Before presenting our results, let us define the deviation-dependent functions $\xi(z)$ and $\kappa(z)$, as mentioned also in the sharp large deviation results of Bahadur and Ranga Rao \cite{Bahadur}. %
  For $x \in \R^2$, we set 
 \begin{equation} \label{eq:HessX}
\displaystyle \mathfrak{H}_{x} :=  \mathcal{H}_{\tau}\Lambda_p(\tau(x))
\end{equation}
to be the Hessian of the c.g.f. \hspace{-0.05cm}$\Lambda_p(\tau)$ in $\tau \in \R^2$, evaluated at $\tau(x)$. %
For $z > m_{p,q}$ such that $z^* \in \mathcal{J}_p$,  we then define the deviation-dependent functions as 
\begin{equation}\label{eq:NormTermXi}
\displaystyle \xi(z)^2 := \langle \mathfrak{H}_{z^*} \, \tau(z^*), \tau(z^*) \rangle\, \det \mathfrak{H}_{z^*},
\end{equation}
\begin{equation}\label{eq:NormTermKappa}
\displaystyle \kappa(z)^2 := 1 - \frac{\left(\tau(z^*)_1^2 + \tau(z^*)_2^2\right)^{3/2} \, |pq(p-q)z^q|}{\big{|}\tau(z^*)_2^2 \left(\mathfrak{H}_{z^*}^{-1}\right)_{11} - 2 \tau(z^*)_1  \tau(z^*)_2 \left(\mathfrak{H}_{z^*}^{-1}\right)_{12} + \tau(z^*)_1^2 \left(\mathfrak{H}_{z^*}^{-1}\right)_{22} \big{|} \, (z^{2q} + p^2q^{-2})^{3/2}}.
\end{equation}
%
%
%
\vspace{-0.025cm}
\begin{thm}\label{thm:MainResult1}
Let $1 \le q < p < \infty$, $n \in \N$, and $Z^{(n)}$ be a random vector in $\B^n_p$ with $Z^{(n)} \sim \bC_{n,p}$. Then, for $n$ sufficiently large and any $z > m_{p,q}$ such that $z^* \in \mathcal{J}_p$, it holds that %
$$
	\Pro\left(n^{1/p-1/q} \|Z^{(n)}\|_q > z\right) = \frac{1}{\sqrt{2\pi n} \, \kappa(z) \xi(z)} \, e^{-n \, \Lambda_p^*(z^*)} \, (1 + o(1)).
$$
\end{thm}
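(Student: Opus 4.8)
The plan is to reduce the tail probability to a two-dimensional large deviation statement for the empirical average $\frac1n\sum_i V_i^{(n)}$, and then apply a sharp Laplace-type integral expansion. Concretely, write $S^{(n)} := \frac1n\sum_{i=1}^n V_i^{(n)} = (A_n, B_n)$ where $A_n = \frac1n\sum |Y_i^{(n)}|^q$ and $B_n = \frac1n\sum|Y_i^{(n)}|^p$. By the representation \eqref{eq:ProbRepQNormCnp}, the event $\{n^{1/p-1/q}\|Z^{(n)}\|_q > z\}$ equals $\{A_n^{1/q}B_n^{-1/p} > z\}$, i.e. $S^{(n)} \in E_z := \{(t_1,t_2): t_1,t_2 > 0,\ t_1^{1/q}t_2^{-1/p} > z\}$. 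Since $z > m_{p,q}$ with $z^* = (z^q,1)\in\mathcal J_p$, Lemma \ref{lem:UniqueMinRF} identifies $z^*$ as the unique minimizer of $\Lambda_p^*$ over $\partial E_z$, and by Remark \ref{rmk:LegendreBijection} there is a unique tilting parameter $\tau(z^*)\in\mathcal D_p$ with $\nabla_\tau\Lambda_p(\tau(z^*)) = z^*$. So the problem becomes: estimate $\Pro(S^{(n)}\in E_z)$ sharply, where the dominating point $z^*$ sits on the smooth boundary curve $\partial E_z$.

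The main engine is the density estimate from Section \ref{sec:JointDensityEstimate} (Proposition \ref{prop:DensityEstimate}): a local limit / saddle-point approximation giving, uniformly on a neighborhood of $z^*$, a Gaussian-type approximation for the density $f_{S^{(n)}}$ of $S^{(n)}$. The next step is then to integrate this density over $E_z$. First I would perform the exponential tilt by $\tau(z^*)$: writing $f_{S^{(n)}}(x) \approx \frac{n}{2\pi\sqrt{\det\mathfrak H_{z^*}}}\,e^{-n\Lambda_p^*(z^*)}\,e^{-n\langle\tau(z^*),\,x-z^*\rangle}\,e^{-\frac n2\langle\mathfrak H_{z^*}^{-1}(x-z^*),\,x-z^*\rangle}(1+o(1))$ locally, and controlling the contribution away from $z^*$ by a crude large-deviation bound (the rate function is strictly larger there, so that piece is exponentially negligible relative to $e^{-n\Lambda_p^*(z^*)}$, using continuity/strict convexity and that $z^*$ is the unique minimizer). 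The remaining local integral $\int_{E_z\cap B(z^*,\delta)} e^{-n[\langle\tau(z^*),x-z^*\rangle + \frac12\langle\mathfrak H_{z^*}^{-1}(x-z^*),x-z^*\rangle]}\,dx$ is exactly the type of Laplace integral over a domain with curved boundary handled by Adriani and Baldi \cite{AdrianiBaldi} and Breitung–Hohenbichler \cite{Breitung1989}. Their expansion produces a factor $\frac{1}{n}\cdot\frac{1}{\sqrt{2\pi n}}$ times an explicit geometric constant built from (i) $\langle\mathfrak H_{z^*}\tau(z^*),\tau(z^*)\rangle$ and $\det\mathfrak H_{z^*}$ — the "flat" Gaussian/normal-direction contribution, which is $\xi(z)$ — and (ii) a correction involving the Weingarten curvature of $\partial E_z$ at $z^*$ measured in the tilted metric, which is $\kappa(z)$. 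Here I would apply Lemma \ref{lem:ImplCurves} with $F(t_1,t_2) = t_1^{1/q}t_2^{-1/p} - z$ (or a convenient reparametrization so that $F_{[1,0]},F_{[0,1]}$ at $z^*$ produce the $\tau(z^*)$-components after accounting for the tilt), compute $F_{[2,0]},F_{[1,1]},F_{[0,2]}$ at $z^* = (z^q,1)$ explicitly, and match the resulting curvature expression against the numerator $|pq(p-q)z^q|$ and the $(\tau_1^2+\tau_2^2)^{3/2}$, $(z^{2q}+p^2q^{-2})^{3/2}$ factors in \eqref{eq:NormTermKappa}. Collecting the $e^{-n\Lambda_p^*(z^*)}$ prefactor, the Jacobian factors from the tilt, the $\frac1{\sqrt{2\pi n}}$ from the one-dimensional boundary integral, and the $\xi(z),\kappa(z)$ geometric constants yields the claimed formula.

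There are a few technical points to check along the way: that the hypotheses of the Adriani–Baldi expansion are met — in particular that $\tau(z^*)$ is (up to sign/normalization) an inward/outward normal to $E_z$ at $z^*$ with the correct orientation so the integral converges, that $\partial E_z$ is $C^2$ near $z^*$ with $\nabla F(z^*)\neq 0$ (immediate since $F_{[1,0]}(z^*), F_{[0,1]}(z^*) \neq 0$), and that $z^*$ is a nondegenerate dominating point, i.e. the relevant quadratic form restricted to the tangent direction is positive — this is where one needs $\kappa(z)^2 > 0$, which should follow from strict convexity of $\Lambda_p$ (positive definiteness of $\mathfrak H_{z^*}$, established in the text) together with convexity of the region $E_z$; I would include a short argument that $\kappa(z)$ is well-defined and positive. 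One also needs that the density approximation in Proposition \ref{prop:DensityEstimate} holds uniformly on the shrinking-but-not-too-fast neighborhood actually used by the Laplace expansion, and that the tail outside it is $o$ of the main term — both should be inherited directly from the statement of that proposition and a standard Cramér/Chernoff bound via $\Lambda_p^*$.

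The main obstacle I expect is the bookkeeping in the last step: carefully matching the output of the geometric Laplace expansion of \cite{AdrianiBaldi}/\cite{Breitung1989} — which is naturally phrased in terms of a normalized normal direction, principal curvatures, and the Hessian of the phase — to the specific closed forms \eqref{eq:NormTermXi} and \eqref{eq:NormTermKappa}. This requires (a) choosing the right parametrization of the boundary curve and the right normalization of $\tau(z^*)$, (b) correctly transporting the curvature of $\partial E_z$ through the exponential tilt (the expansion "sees" curvature in the Euclidean metric but the phase is a quadratic form $\mathfrak H_{z^*}^{-1}$, so there is a change-of-metric factor, which is precisely the source of the $\mathfrak H_{z^*}^{-1}$ entries appearing in the denominator of $\kappa(z)^2$), and (c) tracking all the Jacobian and $2\pi$ factors from the two-dimensional density down to the one-dimensional surface integral. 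None of these steps is deep, but getting every constant right — and in particular confirming that the curvature correction assembles into exactly $\kappa(z)$ as written — is the delicate part of the proof.
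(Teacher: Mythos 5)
Your proposal follows essentially the same route as the paper: rewrite the event as $\{S^{(n)}\in D_z\}$, split into a neighborhood of $z^*$ and its complement, kill the complement with the LDP, use the saddle-point density approximation of Proposition~\ref{prop:DensityEstimate} on the neighborhood, and invoke a geometric Laplace expansion of Adriani--Baldi type to finish. That is precisely what the paper does, so the overall strategy is correct.

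One conceptual point is worth straightening out, because it would affect how you actually carry out the last step. You describe $\kappa(z)$ as ``the Weingarten curvature of $\partial E_z$ at $z^*$ measured in the tilted metric,'' but that is not how the Adriani--Baldi formula is organized. Proposition~\ref{prop:AdrianiBaldiWeingarten} takes as input \emph{two} Weingarten maps at $z^*$, both in the ordinary Euclidean metric: $L_D$ for the boundary $\partial D_z$, and $L_\Lambda$ for the level curve $\{x:\Lambda_p^*(x)=\Lambda_p^*(z^*)\}$ of the phase; in dimension $d=2$ the expansion's correction factor is $\det\bigl(L_\Lambda^{-1}(L_\Lambda-L_D)\bigr)=1-L_D/L_\Lambda$, and \emph{this} ratio equals $\kappa(z)^2$. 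In particular the $\mathfrak H_{z^*}^{-1}$ entries in~\eqref{eq:NormTermKappa} do not come from a change of metric applied to $\partial D_z$; they are simply the entries of $\mathcal H_x\Lambda_p^*(z^*)=\mathfrak H_{z^*}^{-1}$, which appear because $L_\Lambda$ is the curvature of the implicit curve $F(x)=\Lambda_p^*(x)-\Lambda_p^*(z^*)$ computed via Lemma~\ref{lem:ImplCurves}, using the identities $\nabla_x\Lambda_p^*(x)=\tau(x)$ and $\mathcal H_x\Lambda_p^*(x)=\mathfrak H_x^{-1}$ (Lemma~\ref{lem:Ableitungen}). So you need two separate curvature computations, not one: $L_D$ from the graph $t_2=z^{-p}t_1^{p/q}$ via Remark~\ref{rmk:Curvature}\,ii), giving $|pq(p-q)z^q|/(z^{2q}+p^2q^{-2})^{3/2}$, and $L_\Lambda$ from the implicit-curve formula, giving the expression with the $\tau(z^*)_i$ and $(\mathfrak H_{z^*}^{-1})_{ij}$; then $\kappa(z)^2=1-L_D/L_\Lambda$. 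Two smaller notes: the cited Breitung--Hohenbichler result is not needed for this theorem (it is used for the $\ell_p^n$-ball case, where $\partial\mathscr D_z$ fails to be smooth at $z^{**}$), and when verifying the hypotheses of the Laplace expansion you should use Lemma~\ref{lem:UniqueMinCl}\,i), not just Lemma~\ref{lem:UniqueMinRF}, since what is needed is uniqueness of the minimizer over the closure $\overline{D_z}$, not merely over $\partial D_z$.
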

We want to do the same for $\Pro\left(n^{1/p-1/q} \, \|\mathscr{Z}^{(n)}\|_q> z\right)$ with $\mathscr{Z}^{(n)} \sim \bU_{n,p}$ and $z > m_{p,q}$. Again, we start by defining our deviation-dependent function for $z >m_{p,q}$ \vspace{0.125cm}
\begin{eqnarray}\label{eq:NormTermGamma}
\nonumber\gamma(z)^2 &:=& \det \mathfrak{H}_{z^*} \,  \tau(z^*)_1^{2} \, (qz^q \tau(z^*)_1+1)^{2} \vphantom{\sum}\\ 
&& \times \, \Bigg[  \frac{z^{2q} q^2}{p^2}\left(\mathfrak{H}_{z^*}^{-1}\right)_{11} + \frac{2z^q q}{p} \left(\mathfrak{H}_{z^*}^{-1}\right)_{12} +  \left(\mathfrak{H}_{z^*}^{-1}\right)_{22} + \, \tau(z^*)_1 \frac{z^q q(q-p)}{p^2} \Bigg]. 
\end{eqnarray}%
\begin{thm}\label{thm:MainResult2}
Let $1 \le q < p < \infty$, $n \in \N$, and $\mathscr{Z}^{(n)}$ be a random vector in $\B^n_p$ with $\mathscr{Z}^{(n)} \sim \bU_{n,p}$. Then, for $n$ sufficiently large and any $z > m_{p,q}$ such that $z^* \in \mathcal{J}_p$, it holds that %
$$
\Pro\left(n^{1/p-1/q} \|\mathscr{Z}^{(n)}\|_q > z\right) = \frac{1}{\sqrt{2\pi n} \, \gamma(z)} \, e^{-n \, \Lambda_p^*(z^*)} \, (1 + o(1)).
$$
\end{thm}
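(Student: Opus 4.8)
The plan is to reduce the statement about $\|\mathscr Z\|$ to the already-established Theorem \ref{thm:MainResult1} for $\|Z\|$ by exploiting the probabilistic representation \eqref{eq:ProbRepQNormUnp}, namely $n^{1/p-1/q}\|\mathscr Z^{(n)}\|_q \overset{d}{=} U^{1/n}\cdot R_n$, where $R_n := n^{1/p-1/q}\|Z^{(n)}\|_q \overset{d}{=} \left(\tfrac1n\sum|Y_i^{(n)}|^q\right)^{1/q}\big/\left(\tfrac1n\sum|Y_i^{(n)}|^p\right)^{1/p}$ and $U$ is independent of the $Y_i^{(n)}$. First I would write, by conditioning on $U$,
\[
\Pro\left(U^{1/n} R_n > z\right) = \int_0^1 \Pro\left(R_n > z\,u^{-1/n}\right)\dint u = n\int_z^\infty \Pro(R_n > y)\,\frac{z^n}{y^{n+1}}\dint y,
\]
after the substitution $y = z u^{-1/n}$. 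The point is that this last integral is itself a Laplace-type integral in $n$: the factor $y^{-n-1}=e^{-(n+1)\log y}$ combines with the (known, from the proof of Theorem \ref{thm:MainResult1}) exponential decay $\Pro(R_n>y)\approx (\text{prefactor})\,e^{-n\Lambda_p^*(y^*)}$, where $y^*:=(y^q,1)$, to give an integrand of the form $e^{-n\psi(y)}(1+o(1))$ with $\psi(y) = \Lambda_p^*(y^q,1) + \log(y/z)$ (the $\log z$ absorbing into the constant via the $z^n$ out front).

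Next I would carry out a Laplace-method analysis of $n\int_z^\infty (\cdots) e^{-n\psi(y)}\dint y$. Since $z > m_{p,q}$, the function $y\mapsto \Lambda_p^*(y^q,1)$ is strictly increasing on $[z,\infty)$ (it vanishes with its derivative at $y=m_{p,q}$ and is strictly convex-ish by the strict convexity properties of $\Lambda_p$ established before Remark \ref{rmk:LegendreBijection}), and $\log(y/z)$ is strictly increasing from $0$; hence $\psi$ attains its minimum on $[z,\infty)$ at the left endpoint $y=z$, with $\psi(z) = \Lambda_p^*(z^*)$. This is the boundary-maximum case of Laplace's method, so the integral contributes an extra factor of order $1/(n\,\psi'(z))$ rather than $1/\sqrt n$, and one power of $n$ is eaten by the prefactor $n$ out front, leaving the correct overall $1/\sqrt n$ scaling inherited from the prefactor of $\Pro(R_n>y)$ at $y=z$. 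Concretely I expect
\[
\Pro\left(n^{1/p-1/q}\|\mathscr Z^{(n)}\|_q > z\right) = \frac{1}{\sqrt{2\pi n}\,\kappa(z)\xi(z)}\,e^{-n\Lambda_p^*(z^*)}\cdot\frac{1}{\psi'(z)}(1+o(1)),
\]
and the remaining task is to identify $\psi'(z) = \frac{\dint}{\dint y}\Lambda_p^*(y^q,1)\big|_{y=z} + \frac1z$ explicitly. Using $\nabla_\tau\Lambda_p(\tau(y^*)) = y^*$ and the chain rule (so that $\frac{\dint}{\dint y}\Lambda_p^*(y^q,1) = \langle \tau(y^*), \frac{\dint}{\dint y}(y^q,1)\rangle = q y^{q-1}\tau(y^*)_1$), one gets $\psi'(z) = q z^{q-1}\tau(z^*)_1 + 1/z = z^{-1}(q z^q\tau(z^*)_1 + 1)$, matching the factor $\tau(z^*)_1(qz^q\tau(z^*)_1+1)$ appearing in \eqref{eq:NormTermGamma}; a short algebraic reconciliation of $\kappa(z)^2\xi(z)^2\cdot\psi'(z)^2$ against the bracketed expression in $\gamma(z)^2$ should close the identification. (Alternatively, and perhaps more cleanly, one re-runs the density-integration argument of Section \ref{sec:ProofMainResult2} directly on the representation $\mathscr V^{(n)}$ of \eqref{eq:VnScr}, applying the Adriani--Baldi Laplace expansion to the three-dimensional region $\{t_1^{1/q}t_2^{-1/p}t_3 > z\}$ with the factor $t_3^{1/n-1}$ handled via Lemma \ref{lem:UniqueMinRFUnp}, which already pins the minimizer at $z^{**}=(z^q,1,1)$; I would present whichever of the two routes makes the constant bookkeeping shortest.)

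The main obstacle I anticipate is precisely this constant bookkeeping: verifying that the product of the prefactor $1/(\kappa(z)\xi(z))$ from Theorem \ref{thm:MainResult1} with the Laplace factor $1/\psi'(z)$ equals $1/\gamma(z)$ as defined in \eqref{eq:NormTermGamma}. This requires expanding $\det\mathfrak H_{z^*}$, the entries of $\mathfrak H_{z^*}^{-1}$, and the curvature-type term in $\kappa(z)^2$, and checking that the geometric/curvature contribution in $\kappa$ combines with $\psi'(z)^2 = z^{-2}(qz^q\tau(z^*)_1+1)^2$ and $\xi(z)^2 = \langle\mathfrak H_{z^*}\tau(z^*),\tau(z^*)\rangle\det\mathfrak H_{z^*}$ to produce exactly the bracket $\big[\tfrac{z^{2q}q^2}{p^2}(\mathfrak H_{z^*}^{-1})_{11} + \tfrac{2z^q q}{p}(\mathfrak H_{z^*}^{-1})_{12} + (\mathfrak H_{z^*}^{-1})_{22} + \tau(z^*)_1\tfrac{z^q q(q-p)}{p^2}\big]$ times $\det\mathfrak H_{z^*}\,\tau(z^*)_1^2$. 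A secondary technical point is justifying the uniformity of the $(1+o(1))$ in the estimate $\Pro(R_n>y) = \frac{1}{\sqrt{2\pi n}\kappa(y)\xi(y)}e^{-n\Lambda_p^*(y^*)}(1+o(1))$ over $y$ in a neighbourhood of $z$ (and controlling the tail $y\to\infty$ crudely), so that it may be inserted inside the integral; this should follow from the fact that the density estimates of Section \ref{sec:JointDensityEstimate} and the Adriani--Baldi expansion are locally uniform in the deviation parameter, plus a rough exponential upper bound (e.g. from the LDP upper bound, Proposition \ref{prop:KPT-LDP-Cnp}) to handle the far tail.
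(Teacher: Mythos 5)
Your proposal takes a genuinely different route from the paper. The paper works directly with the three-dimensional density of $\mathscr{S}^{(n)}=(S^{(n)},U^{1/n})$ from Proposition~\ref{prop:DensityEstimateUnp}, maps the deviation region $\mathscr{D}_z$ by $\mathfrak{I}$ so that the (non-smooth) corner $z^{**}$ goes to the origin, and then invokes the Breitung--Hohenbichler Laplace expansion (Proposition~\ref{prop:Breitung}), which handles a domain bounded by two hyperplanes without requiring the smooth-boundary hypothesis of Adriani--Baldi. You instead condition on $U$, convert $\Pro(U^{1/n}R_n>z)$ into a one-dimensional integral $n\int_z^\infty\Pro(R_n>y)\,z^n y^{-n-1}\,\dif y$, and insert the already-established sharp estimate for $\Pro(R_n>y)$ from Theorem~\ref{thm:MainResult1}. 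This neatly decouples the ``boundary in $t_3$'' from the smooth curve in the $(t_1,t_2)$-plane, so you never need to confront the corner at $z^{**}$ at all; the extra constraint reappears as a boundary-minimum Laplace integral in one dimension. That is an attractive simplification, and I believe it works, with two caveats.

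First, what you call ``a short algebraic reconciliation'' is the crux and is not automatic. The identification $\kappa(z)\xi(z)\big(qz^q\tau(z^*)_1+1\big)=\gamma(z)$ hinges on the Lagrange-type relation $\tau(z^*)_2=-\tfrac{qz^q}{p}\tau(z^*)_1$, which expresses the fact that $\nabla_x\Lambda_p^*(z^*)$ is normal to $\partial D_z$ at $z^*$; in the paper it appears only implicitly in the Appendix (equation \eqref{eq:DerivativeRewrite3} with $\tilde t_2=1$). Using it, one finds
\[
\tau(z^*)_1^2+\tau(z^*)_2^2=\tfrac{q^2}{p^2}\tau(z^*)_1^2\big(z^{2q}+p^2q^{-2}\big),
\]
which makes the curvature ratio $L_D/L_\Lambda$ in $\kappa^2$ rational in $\tau(z^*)_1$ and lets the bracket in $\gamma^2$ emerge; the cancellation is clean but by no means a one-liner. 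Moreover, if you take the printed formulas~\eqref{eq:NormTermXi}, \eqref{eq:NormTermKappa}, \eqref{eq:NormTermGamma} literally, your identity comes out off by a factor of $\sqrt{\det\mathfrak{H}_{z^*}}$; this appears to trace to the paper's statement of the Adriani--Baldi expansion in Proposition~\ref{prop:AdrianiBaldiWeingarten} (checked against, e.g., $\phi(t)=at_1+\tfrac{c}{2}t_1^2+\tfrac{b}{2}t_2^2$ on the half-plane $\{t_1>0\}$, the stated formula gives $\sqrt{c}/a$ where a direct computation gives $1/(a\sqrt b)$, missing a $\sqrt{\det\mathcal{H}_x\phi(x^*)}$ in the denominator), which propagates into the $\det\mathfrak{H}_{z^*}$ factor in $\xi^2$. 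You should flag this when carrying out the reconciliation rather than assume it trivially closes.

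Second, the uniformity issue you raise at the end is not merely cosmetic. Theorem~\ref{thm:MainResult1} gives a pointwise $(1+o(1))$ in $n$ for each fixed $z$; to slide it under the integral over $y\in(z,\infty)$ you need local uniformity in $y$ near $z$, plus a crude bound away from $z$ (the trivial $\Pro(R_n>y)\le 1$ together with $\int_{z+\eps}^\infty n\,z^n y^{-n-1}\,\dif y=(z/(z+\eps))^n$ suffices for the tail). The local uniformity is plausible since the density estimate of Proposition~\ref{prop:DensityEstimate} and the Laplace expansion are locally uniform in the deviation parameter, but this is precisely the kind of ``inherited $o(1)$'' subtlety that the paper's approach avoids by working with a single density estimate and a single Laplace expansion. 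In exchange for your cleaner geometry you take on this bookkeeping; it is a fair trade, but worth stating explicitly as a lemma rather than a remark.
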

We have seen in Section \ref{subsec:LpLDPs} that $\|Z\|$ and $\|\mathscr{Z}\|$ both satisfy LDPs with the same speed and rate function for $z > m_{p,q}$ such that $z^* \in \mathcal{J}_p$, despite the underlying distributions being different. Comparing Theorem \ref{thm:MainResult1} and Theorem \ref{thm:MainResult2} now paints a different picture, with the sharp asymptotics for $\|Z\|$ and $\|\mathscr{Z}\|$ being noticeably different. As mentioned in our introductory statements, idiosyncratic phenomena of underlying distributions, which can be drowned out on the LDP scale, are often still visible on the scale of sharp large deviations. This is in keeping with what was shown in \cite[Theorem 2.4, Theorem 2.6]{LiaoRamanan} for one-dimensional projections of $\ell_p^n$-spheres and $\ell_p^n$-balls. 
\begin{rmk} 
Let us draw a brief comparison between our results and the concentration inequality that follows by the Gromov-Milman Theorem as discussed in \cite[Remark, p. 1062]{NaorTAMS}. Therein, it is shown that the Gromov-Milman theorem from \cite{Gromov} implies that for $1 < q \le p < \infty$ and a random vector $Z^{(n)} \sim \bC_{n,p}$, it holds that 
$$\Pro\Big( \big| n^{1/p - 1/q}\|Z^{(n)}\|_q -  m_{n,p,q} \big| \ge z \Big) \le C \, \exp \left(-c \, n \, z^{\max\{2,p\}} \right),$$
where $C>0$ and $c>0$ are constants. If we consider the set-up of Theorem \ref{thm:MainResult1}, i.e. $1 \le q < p < \infty$ and $z > m_{n,p,q}$, and only consider deviations without the absolute value, we can derive from the above that
$$\Pro\Big( n^{1/p - 1/q}\|Z^{(n)}\|_q  > z \Big) \le C \, \exp \left(-c \, n \, z^{\max\{2,p\}} \right). \vspace{0.2cm}$$
Comparing this with our sharp large deviation results from Theorem \ref{thm:MainResult1} for $z > m_{p,q}$ such that $z^* \in \mathcal{J}_p$,
$$
	\Pro\left(n^{1/p-1/q} \|Z^{(n)}\|_q > z\right) = \frac{1}{\sqrt{2\pi n} \, \kappa(z) \xi(z)} \, e^{-n \, \Lambda_p^*(z^*)} \, (1 + o(1)),
$$
we can see that our results improve on the estimate in terms of $n\in\N$ by a factor of $n^{-1/2}$ and give explicit and deviation-dependent terms $\kappa(z)$ and $\xi(z)$ instead of fixed constants for all deviations $z$.
\end{rmk}
\begin{rmk} \label{rmk:CompLR} When comparing the SLD results in Theorem~\ref{thm:MainResult1} and Theorem~\ref{thm:MainResult2} to those of Liao and Ramanan \cite[Theorem 2.4, Theorem 2.6]{LiaoRamanan}, one directly notices the core difference in the settings. Liao and Ramanan examine projections of random vectors on ${\mathbb{S}}^{n-1}_{p}$ and ${\mathbb{B}}^{n}_{p}$ with respective distributions $\mathbf{C}_{n,p}$ and $\mathbf{U}_{n,p}$ onto fixed one-dimensional subspaces, and therefore have to consider weighted sums of \textit{dependent} random vectors as probabilistic representations. Thus, all their results have to be conditioned on the projection space and include additional terms accounting for the specifics of the subspace. In our case however, the probabilistic representations are given as sums of \textit{i.i.d.} random variables (see Section~\ref{sec:ProbRep}), which does not necessitate these additional factors. Therefore, when using results from Liao and Ramanan \cite{LiaoRamanan}, we adapt their usage accordingly to the given probabilistic representations in our setting. Beyond that however, the SLD results share several similarities, especially when comparing the deviation-dependent terms $\kappa $, $\xi $ and $\gamma $, which for $q=1$ are almost equal.
\end{rmk}
 Both proofs of Theorem~\ref{thm:MainResult1} and Theorem~\ref{thm:MainResult2} contain three essential steps, as already briefly mentioned in the introduction. The first will be rewriting the probabilities in both theorems with respect to convenient probabilistic representations, specifically $S^{(n)}$ and $\mathscr{S}^{(n)}$ given in~\eqref{eq:PropRepVector} of Section~\ref{sec:ProbRep} as the respective empirical averages of the $V^{(n)}_{i}$ and $\mathscr{V}^{(n)}_{i}$ in~\eqref{eq:Vn} and~\eqref{eq:VnScr}. The idea is to write the deviation probabilities as an integral of their distribution over a given ``deviation area''. The second step is giving local density approximations for these representations. Since the entries of both the $V^{(n)}$ and the $\mathscr{V}^{(n)}$ are highly dependent, no canonical joint densities are available to us to easily do so. However, their Fourier transforms can be given explicitly, thus, for $n \in \mathbb{N}$ large enough one can use the Fourier inversion theorem to write the densities of $S^{(n)}$ and $\mathscr{S}^{(n)}$ as integrals of their Fourier transforms. Heuristically speaking, this means that while the individual $V^{(n)}_{i}$ and $\mathscr{V}^{(n)}_{i}$ do not possess densities in $\mathbb{R}^{2}$, but for $n \in \mathbb{N}$ sufficiently large their empirical averages $S^{(n)}$ and $\mathscr{S}^{(n)}$ asymptotically do. The resulting integrals can then be approximated using the saddle point method. Since our representations are given as sums of i.i.d. random vectors, for whom this has been done in previous results (see, e.g., \cite{BorovkovRogozin,Daniels1954,Richter1957,Richter1958}), we will not prove the density approximations here explicitly. The third and final step then is to calculate the integrals of these densities over their respective deviation area. For $\|Z\|$, this is done by a result of Adriani and Baldi \cite{AdrianiBaldi}, which construes the boundary of the deviation area and the level sets of the rate function in the corresponding LDP as hypersurfaces, which are just planar curves in our setting, and uses their Weingarten maps to approximate the integral. For $\|\mathscr{Z}\|$, this is not applicable, as certain differentiability conditions are no longer met. Thus, a result by Breitung and Hohenbichler \cite{Breitung1989} is used, which allows for multi-dimensional Laplace integral approximations under less restrictive differentiability conditions.
\subsection{Intersection volumes of $\ell_p^n$-balls}\label{subsec:Application}
We want to use our sharp large deviation results to further the findings of Schechtman and Schmuckenschläger \cite{SchechtmanSchmuckenschlaeger} and Schmuckenschläger \cite{SchmuckCLT} for intersection volumes of $t$-multiples of different $\ell_p^n$-balls. We will first give a brief overview of the original results. For $p\in [1, \infty)$, we define $\D_p^n:= \vol_n(\B_p^n)^{-1/n} \, \B_p^n$ to be the volume normalized $\ell_p^n$-ball and recall that 
$$\vol_n(\B_p^n) = \frac{{\left(2 \Gamma \left(1 + \frac{1}{p}\right)\right)}^n}{\Gamma\left(1+\frac{n}{p}\right)}.$$
We furthermore set 
$$c_{n,p} := n^{1/p} \, \vol_n\left(\B_p^n\right)^{1/n} \qquad \text{ and } \qquad c_p:= 2 \, e^{1/p} \, p^{1/p} \, \Gamma\left(1 + \frac{1}{p}\right),$$
 and recall that it was shown in \cite{SchechtmanSchmuckenschlaeger} that $\lim \limits_{n\to \infty} c_{n,p} = c_p$. Moreover, for $p,q \in[1, \infty), p\ne q$, we set 
 $$c_{n, p,q} := \frac{c_{n,p}}{c_{n,q}}, \qquad \qquad A_{n,p,q}:= \frac{c_{n,p}}{m_{p,q} \, c_{n,q}}, \qquad \text{ and } \qquad A_{p,q}:= \lim_{n \to \infty} A_{n,p,q}. $$
 Hence, it follows that
 $$A_{p,q}= \frac{c_{p}}{m_{p,q} \,  c_{q}} = \frac{ \Gamma \left(1 + \frac{1}{p}\right)^{1 + (1/q)}}{ \Gamma \left(1 + \frac{1}{q}\right)  \Gamma\left(\frac{q+1}{p} \right)^{1/q}} \, \, e^{1/p - 1/q}.$$
Lastly, for $t \ge 0$ and $n\in\N$, we define $t_n \ge0$ such that 
$$t_n \frac{A_{p,q}}{A_{n,p,q}} = t.$$
Having established the necessary notation, we shall now recall the result of Schmuckenschläger \cite[Theorem 3.3]{SchmuckCLT}. Therein, it was shown that for $p,q \in [1, \infty), p\ne q$, and $t\ge0$ it holds that 
\begin{equation}\label{eq:VolIntersecConv}
\vol_n \left(\D_p^n \cap t\D_q^n\right) \underset{n\to \infty}{\longrightarrow} \begin{cases}
1 &: A_{p,q}\, t > 1\\
\frac{1}{2} &: A_{p,q}\, t = 1\\
0 &: A_{p,q}\, t < 1.
\end{cases}
\end{equation}
%
To prove this, a central limit theorem for $n^{1/p - 1/q} \|\mathscr{Z}^{(n)}\|_q$ with $\mathscr{Z}^{(n)} \sim \bU_{n,p}$ and $p,q \in [1, \infty)$, $p \ne q$, is shown in \cite[Proposition 2.4, Proof of Theorem 3.2]{SchmuckCLT}, since $\vol_n(\D_p^n \cap t\D_q^n)$ can be written as
\begin{eqnarray} \label{eq:VolIntersec}
\nonumber \vol_n(\D_p^n \cap t\D_q^n)&=& \vol_n\left( \left\{ z \in \D_p^n: z \in  t_n \, \frac{A_{p,q}}{A_{n,p,q}} \, \D_q^n \right \} \right) \vphantom{\int_0}\\
\nonumber&=& \vol_n\left( \left\{ z \in \D_p^n: z \in  t_n \, A_{p,q} \,m_{p,q} \, \frac{c_{n,q}}{c_{n,p}} \, \D_q^n \right \} \right)\vphantom{\int_0}\\
\nonumber &=&  \vol_n\left( \left\{ z \in \vol_n(\B_p^n)^{-1/n} \, \B_p^n: z \in  t_n \,  A_{p,q} \,m_{p,q} \, n^{1/q - 1/p} \, \vol_n(\B_p^n)^{-1/n} \, \B_q^n \right \} \right)\vphantom{\int_0}\\
\nonumber &=& \vol_n(\B_p^n)^{-1} \,  \vol_n\left( \left\{ z \in \B_p^n: z \in  t_n \,  A_{p,q} \,m_{p,q} \, n^{1/q - 1/p} \, \B_q^n \right \} \right)\vphantom{\int_0}\\
 &=& \Pro \left( n^{1/p - 1/q} \|\mathscr{Z}^{(n)}\|_q \le t_n \, A_{p,q} \,m_{p,q} \right).
\end{eqnarray}
However, we know from the Berry-Esseen Theorem (see \cite[Theorem 2.1.3]{Vershynin2018}) that the error of the Gaussian approximation given by a central limit theorem decreases with rate $n^{-1/2}$. Thus, using \eqref{eq:VolIntersec} and the central limit theorem from \cite{SchmuckCLT}, we can only infer a rate of convergence of $n^{-1/2}$ in \eqref{eq:VolIntersecConv}. Using Theorem \ref{thm:MainResult2}, we can considerably refine that rate of convergence in the first of the three cases in \eqref{eq:VolIntersecConv} from a sublinear rate to an exponential rate for $1 \le q<p<\infty$.
\begin{proposition}
Let $1 \le q < p < \infty$ and $n\in\N$. Using the notation established above, it then holds for $t > m_{p,q} \, {c_{n,p,q}}^{-1}$ such that $(t \,  c_{n,p,q})^* \in \mathcal{J}_p$, and sufficiently large $n \in \N$ that 
%
 $$\vol_n\left(\D_p^n \cap t\D_q^n\right) = 1 - \frac{1}{\sqrt{2\pi n} \, \gamma( t \, c_{n,p,q})} \, e^{-n \, \Lambda_p^*(( t\,  c_{n,p,q})^*)} \, (1 + o(1)).$$
%
\end{proposition}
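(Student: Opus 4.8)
The plan is to deduce the statement as an essentially immediate corollary of Theorem~\ref{thm:MainResult2}, using the volume identity~\eqref{eq:VolIntersec}. That identity already records
$$\vol_n\!\left(\D_p^n \cap t\D_q^n\right) = \Pro\!\left(n^{1/p-1/q}\|\mathscr{Z}^{(n)}\|_q \le t_n\, A_{p,q}\, m_{p,q}\right),$$
so the first task is to rewrite the threshold on the right-hand side purely in terms of $t$ and $c_{n,p,q}$. Using the defining relation $t_n A_{p,q}/A_{n,p,q} = t$ together with $A_{n,p,q} = c_{n,p}/(m_{p,q}\, c_{n,q}) = c_{n,p,q}/m_{p,q}$, one obtains $t_n A_{p,q} m_{p,q} = t\, A_{n,p,q}\, m_{p,q} = t\, c_{n,p,q}$. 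Since $\{X\le z\}^c=\{X>z\}$ for any random variable $X$, this turns the identity into
$$\vol_n\!\left(\D_p^n \cap t\D_q^n\right) = 1 - \Pro\!\left(n^{1/p-1/q}\|\mathscr{Z}^{(n)}\|_q > t\, c_{n,p,q}\right).$$

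Next I would check that the hypotheses of Theorem~\ref{thm:MainResult2} hold for the threshold $z := t\, c_{n,p,q}$: the assumption $t > m_{p,q}\, c_{n,p,q}^{-1}$ is precisely $z > m_{p,q}$, and $(t\, c_{n,p,q})^* \in \mathcal{J}_p$ is precisely $z^* \in \mathcal{J}_p$. Theorem~\ref{thm:MainResult2} then yields
$$\Pro\!\left(n^{1/p-1/q}\|\mathscr{Z}^{(n)}\|_q > t\, c_{n,p,q}\right) = \frac{1}{\sqrt{2\pi n}\,\gamma(t\, c_{n,p,q})}\, e^{-n\,\Lambda_p^*\!\big((t\, c_{n,p,q})^*\big)}\,(1+o(1)),$$
and substituting this into the previous display gives exactly the claimed expansion.

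The only point that requires genuine care — and hence the main (if modest) obstacle — is that the deviation value $z_n := t\, c_{n,p,q}$ is not constant in $n$: since $c_{n,p,q}=c_{n,p}/c_{n,q}$ converges to $c_p/c_q$, one is invoking Theorem~\ref{thm:MainResult2} along a convergent sequence of thresholds rather than at a fixed value. I would deal with this in the same spirit as the remark following Lemma~\ref{lem:UniqueMinRFUnp}: for $n$ large the $z_n$ lie in a fixed compact neighbourhood of $t\,c_p/c_q$ on which the maps $z\mapsto\tau(z^*)$, $z\mapsto\mathfrak{H}_{z^*}$, $\gamma$ and $z\mapsto\Lambda_p^*(z^*)$ are continuous and $z^*$ remains in the open admissible domain $\mathcal{J}_p$ (cf.\ Remark~\ref{rmk:LegendreBijection}), and the local density approximation underlying Theorem~\ref{thm:MainResult2} (Proposition~\ref{prop:DensityEstimate}) holds with local uniformity in the deviation parameter. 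Consequently the $o(1)$ in Theorem~\ref{thm:MainResult2} can be taken uniform over such a neighbourhood, so the asymptotic expansion persists verbatim with $z$ replaced by the sequence $z_n$. Everything else is the bookkeeping of the constants $A_{p,q}$, $A_{n,p,q}$, $c_{n,p,q}$ and $m_{p,q}$ carried out above, so no further argument is needed.
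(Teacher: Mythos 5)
Your proof is correct and follows essentially the same route as the paper's: rewrite $\vol_n(\D_p^n\cap t\D_q^n)$ via~\eqref{eq:VolIntersec}, simplify the threshold $t_n A_{p,q} m_{p,q}$ to $t\,c_{n,p,q}$, pass to the complement, verify that $t>m_{p,q}c_{n,p,q}^{-1}$ and $(t\,c_{n,p,q})^*\in\mathcal{J}_p$ put you in the hypotheses of Theorem~\ref{thm:MainResult2}, and apply that theorem. You go one step further than the paper's proof by explicitly flagging, and justifying via local uniformity of the expansion on a compact neighbourhood of $t\,c_p/c_q$, that the threshold $z_n=t\,c_{n,p,q}$ is a moving (though convergent) sequence rather than a fixed $z$ — a genuine subtlety that the paper's argument passes over in silence.
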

\begin{proof}
Let $1 \le q < p < \infty$, $t > m_{p,q} \, {c_{n,p,q}}^{-1}$ such that $(t \,  c_{n,p,q})^* \in \mathcal{J}_p$, and assume $\mathscr{Z}^{(n)}$ is a random vector in $\B_p^n$ with $\mathscr{Z}^{(n)} \sim \bU_{n,p}$. Using \eqref{eq:VolIntersec}, we get that 
\begin{equation}
\nonumber \vol_n\left(\D_p^n \cap t\D_q^n\right)= \Pro \left( n^{1/p - 1/q} \|\mathscr{Z}^{(n)}\|_q \le t_n A_{p,q} \,m_{p,q} \right) = 1 - \Pro \left( n^{1/p - 1/q} \|\mathscr{Z}^{(n)}\|_q > t_n A_{p,q} \,m_{p,q}  \right).
\end{equation}
It now holds that, by $t> m_{p,q} \, {c_{n,p,q}}^{-1}$, we have that  $t \, m_{p,q}^{-1} \, c_{n,p,q} = t \, A_{n,p,q} = t_n \, A_{p,q} > 1$, and hence $t \, c_{n,p,q} = t_n \, A_{p,q} \,m_{p,q} > m_{p,q}$ with $(t \, c_{n,p,q})^* \in \mathcal{J}_p$. Thus, by Theorem \ref{thm:MainResult2}, it follows that 
 $$\vol_n\left(\D_p^n \cap t\D_q^n\right) = 1 - \frac{1}{\sqrt{2\pi n} \, \gamma( t \, c_{n,p,q})} \, e^{-n \, \Lambda_p^*( (t \,  c_{n,p,q} )^*)} \, (1 + o(1)),$$
 which finishes our proof.
\end{proof}
\subsection{One-dimensional projections of $\ell_q^n$-balls}\label{subsec:Application2}
In Remark \ref{rmk:CompLR} we have already discussed the differences between the setting of the results of Liao and Ramanan \cite{LiaoRamanan} and the setting of this paper. However, a geometrically similar result to those in \cite{LiaoRamanan} follows from Theorem \ref{thm:MainResult1}. In \cite[Section 2.4 ]{KPTLimitThm} Kabluchko, Prochno and Thäle derived a central limit theorem for the the length of the projection of an $\ell_p^n$-ball onto the line spanned by a random vector $\theta^{(n)} \in \SSS^{n-1}$ with $\theta^{(n)} \sim \bC_{n,2}$ as a corollary of their main results. We will proceed similarly and derive sharp large deviation results in the same setting. To be specific, in \cite{LiaoRamanan} sharp asymptotics where provided for the \textit{scalar product} of a random vector $Z^{(n)} \sim \bC_{n,p}$ on $\SSS^{n-1}_p$ with a random vector $\theta^{(n)} \sim \bC_{n,2}$ on $\SSS^{n-1}$, which can be negative. We, on the other hand, consider the \textit{absolute value} of the scalar product of such random vectors, thereby only considering non-negative values. \\
\\
In the following, for $q \in [1, \infty]$, define its conjugate $q^*$ via $ {1}/{q} + {1}/{q^*} = 1$, setting ${1}/{\infty} =0$ by convention. Furthermore, for a vector $\theta^{(n)} \in \SSS^{n-1}$, we write $P_{\theta^{(n)}} \B_q^n$ for the projection of $\B_q^n$ onto the line spanned by $\theta^{(n)}$. Then, our quantity of interest is the projection length $\vol_1 \hspace{-0.05cm}\left(P_{\theta^{(n)}} \B_q^n\right)$.

\begin{cor}
Let $2 < q \le \infty$ and $\theta^{(n)} \in \SSS^{n-1}$ be a random vector with $\theta^{(n)} \sim \bC_{n,2}$. Then, for any $z > 2 \, m_{2,q^*}$ such that $(\frac{z}{2})^* \in \mathcal{J}_p$, and sufficiently large $n \in \N$, it holds that
$$
	\displaystyle \Pro\left(n^{1/2-1/q} \, \vol_1 \hspace{-0.05cm}\left(P_{\theta^{(n)}} \B_q^n\right) > z\right) = \frac{1}{\sqrt{2\pi n} \, \kappa\left(\frac{z}{2}\right) \xi\left(\frac{z}{2}\right)} \, e^{-n \, \Lambda^*_2({(\frac{z}{2})}^*)} \, (1 + o(1)),
$$
with $\Lambda_2$ as in \eqref{eq:MgfCgf} and $\xi, \kappa$ as in \eqref{eq:NormTermXi}, \eqref{eq:NormTermKappa}, respectively, for $q^*$ and  $p = 2$.
\end{cor}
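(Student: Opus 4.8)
The plan is to reduce the projection length to a $q^*$-norm on an $\ell_2^n$-sphere and then invoke Theorem~\ref{thm:MainResult1} with $p=2$. First I would recall the elementary fact that for a convex body $K\subset\R^n$ and a unit vector $\theta\in\SSS^{n-1}$, the length of the orthogonal projection of $K$ onto $\R\theta$ equals the width of $K$ in direction $\theta$, i.e. $\vol_1(P_\theta K) = h_K(\theta) + h_K(-\theta)$, where $h_K$ is the support function. For $K=\B_q^n$ this support function is the dual norm, $h_{\B_q^n}(\theta)=\|\theta\|_{q^*}$, and since $\B_q^n$ is symmetric, $h_K(-\theta)=h_K(\theta)$, so $\vol_1(P_{\theta^{(n)}}\B_q^n) = 2\,\|\theta^{(n)}\|_{q^*}$. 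Next I would observe that the cone measure $\bC_{n,2}$ on $\SSS^{n-1}_2$ coincides with the uniform (normalized surface) measure on the Euclidean sphere, so $\theta^{(n)}\sim\bC_{n,2}$ is exactly the setting of Theorem~\ref{thm:MainResult1} with $p=2$: here $2<q\le\infty$ forces $1\le q^*<2$, so the hypothesis $1\le q^* < p=2 < \infty$ of that theorem is satisfied.

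With these identifications, the target probability rewrites as
$$
\Pro\!\left(n^{1/2-1/q}\,\vol_1(P_{\theta^{(n)}}\B_q^n) > z\right)
= \Pro\!\left(n^{1/2-1/q}\cdot 2\,\|\theta^{(n)}\|_{q^*} > z\right)
= \Pro\!\left(n^{1/2-1/q}\,\|\theta^{(n)}\|_{q^*} > \tfrac{z}{2}\right).
$$
Now I would check that the exponent on $n$ matches: with $p=2$ and the role of "$q$" played by $q^*$, Theorem~\ref{thm:MainResult1} rescales by $n^{1/p-1/q^*}=n^{1/2-1/q^*}$, and since $1/q^* = 1 - 1/q$ this equals $n^{1/2-(1-1/q)} = n^{1/q-1/2}$ — which has the wrong sign. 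So the correct bookkeeping is that $n^{1/2-1/q} = n^{1/q^* - 1/2} = n^{1/q^*-1/p}$ with $p=2$; I would double-check that the prefactor in the statement indeed uses $n^{1/2-1/q}$ and reconcile it with the $n^{1/p-1/q^*}$ appearing in Theorem~\ref{thm:MainResult1}. Modulo getting this exponent convention straight, applying Theorem~\ref{thm:MainResult1} with parameters $p=2$, $q \rightsquigarrow q^*$, and deviation level $z/2$ gives directly
$$
\Pro\!\left(n^{1/q^*-1/2}\,\|\theta^{(n)}\|_{q^*} > \tfrac{z}{2}\right)
= \frac{1}{\sqrt{2\pi n}\,\kappa(z/2)\,\xi(z/2)}\,e^{-n\,\Lambda_2^*((z/2)^*)}\,(1+o(1)),
$$
valid whenever $z/2 > m_{2,q^*}$ (equivalently $z > 2\,m_{2,q^*}$, matching the hypothesis) and $(z/2)^*\in\mathcal J_p$, with $\xi$, $\kappa$, $\Lambda_2^*$, and $(z/2)^*$ all the objects of Section~\ref{subsec:MainResults} instantiated at $p=2$ and $q^*$ in place of $q$.

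The main obstacle is purely bookkeeping rather than conceptual: one must verify carefully that the support-function identity $\vol_1(P_\theta \B_q^n)=2\|\theta\|_{q^*}$ holds (this uses $\big(\B_q^n\big)^\circ = \B_{q^*}^n$ and the duality of norms), that $\bC_{n,2}$ on $\SSS_2^{n-1}$ really is the rotation-invariant measure so that Lemma~\ref{lem:ProbRep} and hence Theorem~\ref{thm:MainResult1} apply verbatim with $p=2$, and that the rescaling exponent $n^{1/2-1/q}$ in the corollary is consistent with the $n^{1/p-1/q}$-convention of the theorem after the substitution $q\rightsquigarrow q^*$, $p\rightsquigarrow 2$. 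There is also a minor degenerate case $q=\infty$ (so $q^*=1$): here $\|\theta\|_1$ is still an honest norm with $1=q^* < 2 = p$, so Theorem~\ref{thm:MainResult1} still applies; one should just note that $m_{2,1}$ and the associated $\xi,\kappa$ are well-defined in this boundary case. Once these identifications are in place, the statement follows by direct substitution with no further analysis required.
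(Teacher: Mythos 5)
Your proposal takes exactly the same route as the paper's own proof: reduce the projection length to a dual norm via $\vol_1(P_{\theta^{(n)}}\B_q^n)=2\sup_{x\in\B_q^n}|\langle x,\theta^{(n)}\rangle|=2\|\theta^{(n)}\|_{q^*}$, observe that $\theta^{(n)}\sim\bC_{n,2}$ with $1\le q^*<2=p$ puts you squarely in the hypotheses of Theorem~\ref{thm:MainResult1}, and substitute $p\rightsquigarrow 2$, $q\rightsquigarrow q^*$, deviation level $z/2$. Your auxiliary remarks ($\bC_{n,2}$ coincides with the rotation-invariant measure on $\SSS^{n-1}$, $(\B_q^n)^\circ=\B_{q^*}^n$, the boundary case $q=\infty$ giving $q^*=1$ is still admissible) are all correct and implicit in the paper's two-line argument.

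The sign discrepancy you flagged is not a bookkeeping slip on your part but a genuine typo in the corollary as printed (and carried through its proof). Theorem~\ref{thm:MainResult1} with $p=2$ and $q\rightsquigarrow q^*$ normalizes by $n^{1/p-1/q^*}=n^{1/2-1/q^*}$, and since $1/q^*=1-1/q$ this equals $n^{1/q-1/2}$, whereas the corollary writes $n^{1/2-1/q}=n^{1/q^*-1/2}$, its reciprocal. One can see directly that the printed exponent cannot be right: by the law of large numbers $\|\theta^{(n)}\|_{q^*}\sim n^{1/q^*-1/2}\,m_{2,q^*}$, so
$$
n^{1/2-1/q}\,\vol_1\!\left(P_{\theta^{(n)}}\B_q^n\right)\sim 2\,n^{(1/2-1/q)+(1/q^*-1/2)}\,m_{2,q^*}=2\,n^{1-2/q}\,m_{2,q^*}\longrightarrow\infty
$$
for $q>2$, while $n^{1/q-1/2}\,\vol_1(P_{\theta^{(n)}}\B_q^n)\to 2\,m_{2,q^*}$. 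So the corollary and the first two displayed probabilities in its proof should read $n^{1/q-1/2}$ (equivalently $n^{1/2-1/q^*}$); once that is corrected, your reduction and the application of Theorem~\ref{thm:MainResult1} go through exactly as in the paper, and no further argument is needed.
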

\begin{proof}
It holds that 
$$
 \Pro\left(n^{1/2-1/q} \, \vol_1 \hspace{-0.05cm}\left(P_{\theta^{(n)}} \B_q^n\right) >   z \right) =  \Pro\left(n^{1/2-1/q} \, 2 \sup_{x \in \B_{q}^n} \hspace{-0.1cm}|\langle x, \theta^{(n)} \rangle|  > z\right) = \Pro\left(n^{1/2-1/q} \, \|\theta^{(n)}\|_{q^*}  > \frac{z}{2}\right).
$$
Since $2 < q \le \infty$, we have $ 1\le q^* <2 =p$, whereby we can apply Theorem \ref{thm:MainResult1} to the above to get that
$$\Pro\left(n^{1/2-1/q} \, \vol_1 \hspace{-0.05cm}\left(P_{\theta^{(n)}} \B_q^n\right) > z \right) = \frac{1}{\sqrt{2\pi n} \, \kappa\left(\frac{z}{2}\right) \xi\left(\frac{z}{2}\right)} \, e^{-n \, \Lambda^*_2\left((\frac{z}{2})^*\right)} \, (1 + o(1)),$$
with $\Lambda_2, \xi, \kappa$ as described above, which concludes our proof.
\end{proof}

\text{}
\section{Probabilistic Representation} \label{sec:ProbRep}
Recalling the definitions of the random vectors $V^{(n)}$ and $\mathscr{V}^{(n)}$ from \eqref{eq:Vn} and \eqref{eq:VnScr}, we define 
\begin{equation} \label{eq:PropRepVector}
S^{(n)} := \frac{1}{n} \sum_{i=1}^n  V^{(n)}_i  \qquad \text{ and } \qquad \mathscr{S}^{(n)} := \frac{1}{n} \sum_{i=1}^n  \mathscr{V}^{(n)}_i
\end{equation}
as the empirical averages of their respective coordinates. Furthermore, we define the sets 
$$
D_{z}:= \{(t_1, t_2) \in \R^2: t_1,t_2 > 0, t_1^{1/q} \, t_2^{-1/p} > z\},
$$
and 
$$
\mathscr{D}_{z}:= \{(t_1, t_2, t_3) \in \R^3: t_1,t_2 > 0, \,  t_3 \in (0,1], \, t_3 \, t_1^{1/q} \, t_2^{-1/p} > z \}.
$$
It then follows from the reformulations of $\|Z^{(n)}\|_q$ and $\|\mathscr{Z}^{(n)}\|_q$ in  \eqref{eq:ProbRepQNormCnp} and \eqref{eq:ProbRepQNormUnp} that we can write the probabilities within Theorem \ref{thm:MainResult1} and Theorem \ref{thm:MainResult2} with respect to $S^{(n)}$ and $\mathscr{S}^{(n)}$, respectively, as
\begin{equation}\label{eq:ProbRepDev}
\Pro \left(n^{1/p-1/q} \, \|Z^{(n)}\|_q > z\right) = \Pro\left( \frac{1}{n} \sum_{i=1}^n |Y^{(n)}_i|^{q} > z^q \, {\left( \frac{1}{n} \sum_{i=1}^n |Y^{(n)}_i|^{p}\right)}^{\frac{q}{p}} \right) = \Pro\left(S^{(n)} \in D_{z}\right),
\end{equation}
and 
\begin{equation}\label{eq:ProbRepDevUnp}
\Pro\left(n^{1/p-1/q} \, \|\mathscr{Z}^{(n)}\|_q > z\right) = \Pro\left(  U^{\frac{q}{n}} \, \frac{1}{n} \sum_{i=1}^n |Y^{(n)}_i|^{q} > z^q  \, {\left( \frac{1}{n} \sum_{i=1}^n |Y^{(n)}_i|^{p}\right)}^{\frac{q}{p}} \right) = \Pro\left(\mathscr{S}^{(n)} \in \mathscr{D}_{z}\right).
\end{equation}
We refer to these sets as ``deviation areas'', as $S^{(n)}$ or  $\mathscr{S}^{(n)}$ lying in $D_z$ or $\mathscr{D}_z$ represents a deviation of  $\|Z^{(n)}\|_q$ and $\|\mathscr{Z}^{(n)}\|_q$. Note that the boundaries of the deviation areas
$$\partial D_{z} = \{(t_1, t_2) \in \R^2: t_1,t_2 > 0,  t_1^{1/q} \, t_2^{-1/p} = z\}$$
and
$$\partial \mathscr{D}_{z}  = \{(t_1, t_2, t_3) \in \R^3: t_1,t_2 >0, \,  t_3 \in (0,1], \, t_3 \,  t_1^{1/q} \, t_2^{-1/p} = z \}$$
\text{}\vspace{-0.3cm}\\
are the same sets given by the infimum conditions in the respective LDPs for $\|Z\|$ and $\|\mathscr{Z}\|$ in Proposition \ref{prop:KPT-LDP-Cnp} and Proposition \ref{prop:KPT-LDP-Unp}. The fact that for $z > m_{p,q}$, the rate functions of these LDPs both assume a unique minimum on $\partial D_{z}$ and $\partial \mathscr{D}_{z}$, respectively, as was shown in Lemma \ref{lem:UniqueMinRF} and Lemma \ref{lem:UniqueMinRFUnp}, will be essential to the proof of our main results in Sections \ref{sec:ProofMainResult1} and \ref{sec:ProofMainResult2}. We can expand this unique infimum property onto the entirety of $\overline{D_{z}}$ and $\overline{\mathscr{D}_{z}}$, as the following lemma will show:
\begin{lemma}\label{lem:UniqueMinCl}
Assume the same set-up as in Lemma \ref{lem:UniqueMinRF} and Lemma \ref{lem:UniqueMinRFUnp}. Let $z > m_{p,q}$ such that $z^* \in \mathcal{J}_p$. Then 
\begin{enumerate}
\item[i)]{$z^* = (z^q,1)$ is the unique point at which $\Lambda_p^*$ attains its infimum on $\overline{D_{z}}$,}
\vspace{0.25cm}
\item[ii)]{$z^{**} = (z^q,1,1)$ is the unique point at which $\mathcal{I}_{\mathscr{S}}$ attains its infimum on $\overline{\mathscr{D}_{z}}$.}
\end{enumerate}
\end{lemma}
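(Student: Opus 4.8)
The plan is to reduce the statements on the closures $\overline{D_z}$ and $\overline{\mathscr{D}_z}$ to the already-established uniqueness results on the boundaries $\partial D_z$ and $\partial \mathscr{D}_z$ from Lemma~\ref{lem:UniqueMinRF} and Lemma~\ref{lem:UniqueMinRFUnp}. The key observation for part i) is that $\Lambda_p^*$ is a (strictly) convex function whose unique global minimum is attained at $\nabla_\tau \Lambda_p(0,0) = (M_p(q), M_p(p)) = (m_{p,q}^q, 1)$, which, since $z > m_{p,q}$, lies \emph{strictly outside} $\overline{D_z}$ (indeed $(m_{p,q}^q)^{1/q} \cdot 1^{-1/p} = m_{p,q} < z$). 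Hence on the convex set $\overline{D_z}$ the infimum of $\Lambda_p^*$ must be attained on the boundary $\partial D_z$: if it were attained at an interior point $x_0 \in D_z$, then moving along the segment from $x_0$ toward the global minimizer would strictly decrease $\Lambda_p^*$ while staying in $D_z$ for a short time (since $D_z$ is open), contradicting minimality. Once we know the infimum over $\overline{D_z}$ equals the infimum over $\partial D_z$, Lemma~\ref{lem:UniqueMinRF} identifies it as $\Lambda_p^*(z^*)$, attained uniquely at $z^* = (z^q,1) \in \partial D_z$; strict convexity of $\Lambda_p^*$ on $\mathcal J_p$ then rules out any second minimizer in $\overline{D_z}$.

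For part ii) I would argue analogously for $\mathcal{I}_{\mathscr S}(t) = \Lambda_p^*(t_1,t_2) - \log t_3$ on $\overline{\mathscr D_z}$. First note $\mathcal{I}_{\mathscr S}$ is convex on $\R^2 \times (0,1]$, being the sum of the convex $\Lambda_p^*$ in $(t_1,t_2)$ and the convex $-\log t_3$ in $t_3$, and it is strictly convex where $\Lambda_p^*$ is, i.e. on $\mathcal J_p \times (0,1]$. Because $-\log t_3 \ge 0$ on $(0,1]$ with equality only at $t_3 = 1$, and because of the global minimizer structure of $\Lambda_p^*$, one checks that the global minimizer of $\mathcal{I}_{\mathscr S}$ over $\R^2 \times (0,1]$ is $(m_{p,q}^q, 1, 1)$, which again lies strictly outside $\overline{\mathscr D_z}$ for $z > m_{p,q}$. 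So the infimum over $\overline{\mathscr D_z}$ is attained on $\partial \mathscr D_z$, where Lemma~\ref{lem:UniqueMinRFUnp} already gives the unique minimizer $z^{**} = (z^q,1,1)$ with value $\Lambda_p^*(z^*)$; strict convexity again precludes interior minimizers. A small technical point I should handle: $\overline{\mathscr D_z}$ is not closed in the naive sense because of the constraint $t_3 \in (0,1]$ together with $t_1, t_2 > 0$ — one must check the infimum is genuinely attained and not approached only as $t_1 \to \infty$ or $t_3 \to 0$; this follows from the coercivity of $\Lambda_p^*$ (its superlinear growth, since $\mathcal D_p$ is open and $\Lambda_p^*$ is a good rate function by Proposition~\ref{prop:KPT-LDP-Cnp}) together with $-\log t_3 \to +\infty$ as $t_3 \to 0^+$, so that sublevel sets of $\mathcal{I}_{\mathscr S}$ intersected with $\overline{\mathscr D_z}$ are compact.

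The main obstacle I anticipate is making the "infimum on a convex set with strictly-exterior global minimizer is attained on the boundary" argument fully rigorous in the presence of the partly-open constraint regions ($t_1,t_2 > 0$, $t_3 \in (0,1]$) and the possibility that $\overline{D_z}$ (respectively $\overline{\mathscr D_z}$) is unbounded. One must be slightly careful that the relevant closures do not include degenerate boundary pieces (e.g. $t_1 = 0$ or $t_2 = 0$), where $t_1^{1/q} t_2^{-1/p}$ is not well-behaved, and that $z^*$ genuinely lies in the part of the boundary where $\Lambda_p^*$ is finite and smooth — but this is exactly the hypothesis $z^* \in \mathcal J_p$. Everything else is a routine combination of convexity, the explicit location of the unconstrained minimizer via $\nabla_\tau \Lambda_p(0,0) = (m_{p,q}^q,1)$, and the uniqueness already proved on the boundaries.
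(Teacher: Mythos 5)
Your route is genuinely different from the paper's, and the underlying idea is sound: the unique global minimizer $\nabla_\tau\Lambda_p(0,0)=(m_{p,q}^q,1)$ of $\Lambda_p^*$ lies strictly outside $\overline{D_z}$, so from any interior point of $D_z$ there is a descent direction toward it, ruling out interior minimizers. Two points need repair, though. First, $\overline{D_z}$ is \emph{not} convex: the condition $t_1^{1/q}t_2^{-1/p}>z$ reads $t_1 > z^q t_2^{q/p}$, which is the region above the graph of a strictly concave function of $t_2$ (since $q/p<1$) and fails the midpoint test. Your sliding argument only uses that $D_z$ is open, so the error is harmless, but the claim should be dropped. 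Second, and more substantively, showing that no interior point can be a minimizer does not by itself give $\inf_{\overline{D_z}}\Lambda_p^*=\inf_{\partial D_z}\Lambda_p^*$, since a priori the infimum over $\overline{D_z}$ could be strictly smaller and approached only at infinity; you flag this coercivity issue for part ii) but it is equally needed for part i). One can either invoke that $\Lambda_p^*$ is a good rate function on $\R^2$ (Cram\'er's theorem, cf.\ Section~\ref{subsec:PrelimLDP}), so sublevel sets are compact and the infimum over the closed set $\overline{D_z}$ is attained, or — cleaner and avoiding compactness — observe that if some $x_0\in D_z$ had $\Lambda_p^*(x_0)<\Lambda_p^*(z^*)$, the segment from $x_0$ to the exterior global minimizer would cross $\partial D_z$ at a point $y$ with $\Lambda_p^*(y)\le\Lambda_p^*(x_0)<\Lambda_p^*(z^*)$ (by convexity, $\Lambda_p^*$ is non-increasing along that segment), contradicting Lemma~\ref{lem:UniqueMinRF} directly. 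By contrast, the paper's proof foliates the interior: any $t\in D_z$ with finite $\Lambda_p^*$-value lies on $\partial D_{\tilde z}$ for $\tilde z:=t_1^{1/q}t_2^{-1/p}>z$, so Lemma~\ref{lem:UniqueMinRF} on that level curve, together with the strict monotonicity of $z\mapsto\mathcal{I}_{\|Z\|}(z)=\Lambda_p^*(z^q,1)$ for $z>m_{p,q}$ (from strict convexity and the root at $m_{p,q}$), yields $\Lambda_p^*(t)\ge\mathcal{I}_{\|Z\|}(\tilde z)>\mathcal{I}_{\|Z\|}(z)=\Lambda_p^*(z^*)$ pointwise. That approach sidesteps both the location of the unconstrained minimizer and any compactness or crossing argument, at the cost of reusing Lemma~\ref{lem:UniqueMinRF} on all the interior level curves rather than only on $\partial D_z$.
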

\begin{proof}
We start off by showing \textit{i)}. Let $t\in\R^2$ such that $t \in D_z^\circ$, meaning $t_1^{1/q}t_2^{-1/p}>z$. Then, for $\tilde z:= t_1^{1/q}t_2^{-1/p}$ we assume that $\tilde z \in \mathcal{J}_p$, as otherwise our claim trivially holds by Remark \ref{rmk:LegendreBijection}. We then have that $t \in \partial D_{\tilde z}$, thus, by Lemma \ref{lem:UniqueMinRF}, $\Lambda_p^*(t_1, t_2) > \Lambda_p^*(\tilde z ^q,1) = \mathcal{I}_{\|Z\|}(\tilde z)$. We know that $\Lambda_p^*$ is a convex function with a root in the expectation $(m_{p,q}^q,1)$ of the $V_i^{(n)}$ from \eqref{eq:Vn}, since it is a rate function (apply arguments from \cite[Lemma 2.2.5]{DZ} in $\R^2$). We also show in Lemma \ref{lem:Ableitungen} $ii)$ that $\mathcal{H}_x \Lambda_p^*(x) = \mathfrak{H}_{x}^{-1}$, and have argued in Section \ref{sec:Preliminaries} for why $\mathfrak{H}_{x}$ is positive definite on $\mathcal{D}_p$, hence the Hessian of $\Lambda_p^*$ is also positive definite on $\mathcal{J}_p$, giving us the strict convexity of $\Lambda_p^*$ and, thereby strict convexity of $\mathcal{I}_{\|Z\|}(z)  = \Lambda_p^*(z^*)$ on $\mathcal{J}_p$. Hence we know that $\mathcal{I}_{\|Z\|}(z)$ is strictly increasing in $z$ for $z >m_{p,q}$. Thus, as $\tilde z > z > m_{p,q}$, it follows that
\text{}\\
$$\Lambda_p^*(t_1, t_2) > \Lambda_p(\tilde z ^q,1) = \mathcal{I}_{\|Z\|}(\tilde z) > \mathcal{I}_{\|Z\|}(z) = \Lambda_p^*(z ^q,1)= \Lambda_p^*(z ^*),$$
\text{}\\
showing that $z^* = (z^q,1)$ minimizes $\Lambda_p^*$ over $\overline{D_z}$. The proof of \textit{ii)} is analogous, also using the strict monotonicity of the rate function.
\end{proof}
 Suppose that the distributions of  $S^{(n)}$ and $\mathscr{S}^{(n)}$ have respective densities $h^{(n)}$ and $\mathscr{h}^{(n)}$. Then we can formulate our probabilities of interest as %
\begin{equation} \label{eq:ProbRepDevInt}
\Pro\left(n^{1/p-1/q} \, \|Z^{(n)}\|_q > z \right) = \Pro\left(S^{(n)} \in D_{z}\right) =  \int_{D_{z}} h^{(n)}(x) \,  \dint x,
\end{equation}
and
\begin{equation} \label{eq:ProbRepDevIntUnp}
\Pro\left(n^{1/p-1/q} \, \|\mathscr{Z}^{(n)}\|_q > z\right) = \Pro\left(\mathscr{S}^{(n)} \in \mathscr{D}_{z}\right) =  \int_{\mathscr{D}_{z}} \mathscr{h}^{(n)}(x) \, \dint x.
\end{equation}
The following section will be devoted to showing the existence of these densities $h^{(n)}$ and $\mathscr{h}^{(n)}$ and presenting them explicitly, while Sections~\ref{sec:ProofMainResult1} and~\ref{sec:ProofMainResult2} will then approximate their integrals over their respective deviation areas $D_{z}$ and $\mathscr{D}_{z}$.%

\section{Joint Density Estimate}\label{sec:JointDensityEstimate}

Recalling the notation and definitions established in Section \ref{sec:Preliminaries}, we assume the same set-up as in Section \ref{sec:ProbRep} and can formulate the following local limit theorems for the densities $h^{(n)}$  and $\mathscr{h}^{(n)}$ of our probabilistic representations $S^{(n)}$ and $\mathscr{S}^{(n)}$. 
\begin{proposition}\label{prop:DensityEstimate}
 For $S^{(n)} = \frac{1}{n} \sum_{i=1}^n  V^{(n)}_i$ with $V^{(n)}_i = ({|Y^{(n)}_i|}^q, {|Y^{(n)}_i|}^p), \, Y_i^{(n)} \sim \bN_p$ i.i.d., and $x \in \mathcal{J}_p$, it holds that for sufficiently large $n \in \N$  the distribution of $S^{(n)} $ has Lebesgue density
	$$ h^{(n)}(x) = \frac{n}{2\pi} \, {(\det \mathfrak{H}_{x})}^{-1/2} \,  e^{- n \, \Lambda_p^*(x)} \, (1 + o(1)),$$
where $\mathfrak{H}_{x}:=  \mathcal{H}_{\tau}\Lambda_p(\tau(x))$ as in \eqref{eq:HessX}.
\end{proposition}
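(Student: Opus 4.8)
The plan is to realise $h^{(n)}$ as a density obtained from a local central limit theorem (local limit theorem with large-deviation scaling) for sums of i.i.d. random vectors, applied to the i.i.d. vectors $V_i^{(n)} = (|Y_i^{(n)}|^q, |Y_i^{(n)}|^p)$ with $Y_i^{(n)} \sim \bN_p$, and then to invoke the saddle-point / Esscher-tilting machinery that produces precisely the claimed Gaussian-type prefactor $\frac{n}{2\pi}(\det\mathfrak H_x)^{-1/2}e^{-n\Lambda_p^*(x)}$. First I would record the basic analytic properties of $\Lambda_p$ established in Section~\ref{sec:Preliminaries}: it is finite and smooth on $\mathcal D_p = \R\times(-\infty,1/p)$, strictly convex there, $\nabla_\tau\Lambda_p$ is a bijection from $\mathcal D_p$ onto $\mathcal J_p$, and for $x\in\mathcal J_p$ the point $\tau(x)$ with $\nabla_\tau\Lambda_p(\tau(x))=x$ is unique and lies in the interior of $\mathcal D_p$, so that $\mathfrak H_x = \mathcal H_\tau\Lambda_p(\tau(x))$ is positive definite. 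These facts guarantee that the exponential tilt of $V_i^{(n)}$ by $\tau(x)$ is well-defined, has mean exactly $x$, covariance $\mathfrak H_x$, and satisfies $\Lambda_p^*(x) = \langle x,\tau(x)\rangle - \Lambda_p(\tau(x))$.

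The core step is then the change of measure: for a test set one writes
\[
\Pro\bigl(S^{(n)}\in A\bigr) = e^{-n(\langle x,\tau(x)\rangle - \Lambda_p(\tau(x)))}\,\E_{\tau(x)}\!\left[e^{-n\langle \tau(x),\, S^{(n)}-x\rangle}\mathbbm 1_{S^{(n)}\in A}\right],
\]
where $\E_{\tau(x)}$ is expectation under the tilted law; localising $A$ to an infinitesimal neighbourhood of $x$ and differentiating, the density $h^{(n)}(x)$ equals $e^{-n\Lambda_p^*(x)}$ times the density at $x$ of the tilted empirical average $S^{(n)}$. Under the tilted law the summands are i.i.d. with mean $x$ and covariance $\mathfrak H_x$, and a local CLT gives that this density is $\frac{n}{2\pi}(\det\mathfrak H_x)^{-1/2}(1+o(1))$ (the $n/(2\pi)$ being the two-dimensional normalisation $n^{d/2}/(2\pi)^{d/2}$ with $d=2$). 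Rather than reprove this, I would cite the classical results referenced in the paper — Borovkov--Rogozin, Daniels \cite{Daniels1954}, Richter \cite{Richter1957,Richter1958} — and simply verify that their hypotheses hold here: (i) $\Lambda_p$ is finite in a neighbourhood of $\tau(x)$ for every $x\in\mathcal J_p$, (ii) the $V_i^{(n)}$ are non-degenerate, i.e. not supported on an affine hyperplane of $\R^2$ (true since $1\le q<p<\infty$, as already noted in Section~\ref{sec:Preliminaries}, so the covariance matrix at $0$ and hence every tilted covariance $\mathfrak H_x$ is positive definite), and (iii) a Cramér-type smoothness condition ensuring the tilted distribution has a bounded density for large $n$. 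For (iii) I would observe that $Y_i^{(n)}$ has a bounded continuous density on $\R$, hence $(|Y|^q,|Y|^p)$ has an absolutely continuous law on the curve-parametrised region $\{(u,v): v = u^{p/q}, u\ge 0\}$... — wait, that is a one-dimensional image, so a single $V_i^{(n)}$ has \emph{no} density on $\R^2$; this is exactly the point flagged in the paper's outline.

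Therefore the main obstacle — and the step deserving the most care — is justifying that the \emph{sum} of $n$ such singular vectors acquires an absolutely continuous law on $\R^2$ for $n$ large, and that this law admits the stated saddle-point expansion. The resolution is that each $V_i^{(n)}$ is supported on the smooth one-dimensional curve $\{(u,u^{p/q}):u\ge0\}$, which is not contained in any line, so the convolution of two (or, to be safe, finitely many) copies already has a density on an open subset of $\R^2$; once the summed law has a bounded density, the Edgeworth/saddle-point expansion for densities of i.i.d. sums applies verbatim. Concretely I would: (a) write the characteristic function of $S^{(n)}$ under the tilted law as $\widehat{f}_{\tau(x)}(t/\sqrt n\cdot)^n$-type product, split the Fourier-inversion integral into a central region $|t|\le\delta\sqrt n$ where a quadratic Taylor expansion of $\Lambda_p$ about $\tau(x)$ yields the Gaussian $(\det\mathfrak H_x)^{-1/2}$ term, and a tail region controlled by the fact that $|\widehat f_{\tau(x)}(t)|<1$ uniformly off neighbourhoods of $0$ (non-lattice/non-degenerate since $Y$ has a density); (b) conclude $h^{(n)}(x) = \frac{n}{2\pi}(\det\mathfrak H_x)^{-1/2}e^{-n\Lambda_p^*(x)}(1+o(1))$ with the $o(1)$ locally uniform in $x\in\mathcal J_p$. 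I would then remark that the density $\mathscr h^{(n)}$ of $\mathscr S^{(n)}$ factorises as $h^{(n)}$ times the (explicit, bounded) density of $U^{1/n}$ on $(0,1]$ by independence of $U$ from the $Y_i^{(n)}$, which is why the statement only needs to be proved for $h^{(n)}$.
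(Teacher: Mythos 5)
Your overall strategy coincides with the paper's: reduce to a local density estimate for the empirical average of i.i.d.\ vectors via exponential tilting, and defer the core saddle-point expansion to the classical references (Borovkov--Rogozin, Daniels, Richter), which is exactly what the paper does by citing \cite{BorovkovRogozin} and its reformulation \cite[Theorem~3.1]{AdrianiBaldi}. You also correctly identify the crux — that a single $V^{(n)}_i = (|Y^{(n)}_i|^q,|Y^{(n)}_i|^p)$ is supported on a curve and has no Lebesgue density on $\R^2$ — and verify the remaining hypotheses (non-degeneracy of $\mathfrak H_x$, $\tau(x)$ interior to $\mathcal D_p$) in the same way the paper does.

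Where you diverge is in how the singularity is resolved, and this is where your argument has a gap. The paper, following \cite[Remark~3.2]{AdrianiBaldi}, needs the Fourier \emph{inversion} theorem to apply to $S^{(n)}$ for large $n$, i.e.\ $\varphi_V^{\,n_0}$ must be \emph{integrable} for some $n_0$; the paper gets this by writing $\varphi_V$ as an integral against $f_p$ and invoking the Hausdorff--Young inequality, following Liao and Ramanan \cite[Lemma~6.1]{LiaoRamanan}. You instead argue that, since the support curve $\{(u,u^{p/q}):u\ge 0\}$ is not contained in a line, the convolution of finitely many copies of the law of $V^{(n)}_i$ ``already has a density on an open subset of $\R^2$'', and then claim the expansion ``applies verbatim''. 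That is not quite sufficient: existence (or even boundedness) of the $n_0$-fold convolution's density does not imply integrability of $\varphi_V^{\,n_0}$ (a bounded density can have a characteristic function decaying as slowly as $|t|^{-1}$), and your subsequent tail control via $|\varphi_V|<1$ off $0$ only supplements, not replaces, that integrability. Your geometric intuition can be turned into a proof via a stationary-phase / van der Corput decay estimate $|\varphi_V(t)|=O(|t|^{-1/2})$ coming from the curvature of the support curve, which would give integrability of $\varphi_V^{\,n_0}$ for $n_0\ge 5$; but the curvature of $u\mapsto u^{p/q}$ degenerates both at $u=0$ and as $u\to\infty$, and the pushed-forward measure is not compactly supported, so this route needs a nontrivial localisation argument. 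The Hausdorff--Young route the paper follows is cleaner precisely because it works directly with $f_p$ rather than with the geometry of the image curve. Your final remark about $\mathscr h^{(n)}$ factorising matches Proposition~\ref{prop:DensityEstimateUnp} in the paper.
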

For the proof of this, we refer to the results of Borovkov and Rogozin \cite{BorovkovRogozin} or their convenient  reformulation in \cite[Theorem 3.1]{AdrianiBaldi}. Therein, a local density estimate is derived  for a sum of i.i.d. random vectors in $\R^d$ via the saddle point method. As discussed in Section \ref{subsec:PrelimLDP}, this means, one writes the density via the Fourier inversion theorem as a complex integral over its Fourier transform and then uses Cauchy's theorem to deform the path of integration, such that it passes through a complex saddle point. For sufficiently large $n \in \N$, the mass of the integral then heavily concentrates around that saddle point and standard integral expansion methods can be used to great effect. Naturally, this requires the conditions of the Fourier inversion theorem to be met, that is, the Fourier transform of the density has to be integrable. In \cite[Theorem 3.1]{AdrianiBaldi} this follows from the assumption that all the i.i.d. \hspace{-0.1cm}random vectors have a common bounded density, though it is noted in \cite[Remark 3.2]{AdrianiBaldi}, that this can be replaced by any argument ensuring that the Fourier inversion theorem can be applied. In our setting, the i.i.d. \hspace{-0.1cm}vectors are given by $V^{(n)}_i :=({|Y_i^{(n)}|}^q, {|Y_i^{(n)}|}^p)$, whose coordinates are highly dependent, thus such a density of the $V^{(n)}_i$ is not available. However, one can write the Fourier transform of $V^{(n)}_i$ with respect to the underlying distribution $\bN_p$ of the $Y_i^{(n)}$, and then infer integrability via the properties of its density $f_p$ and the Hausdorff-Young inequality, as was done by Liao and Ramanan in \cite[Lemma 6.1]{LiaoRamanan}. As the considered settings are quite similar, virtually the same arguments can be applied in our case, thereby making sure our referral to \cite[Theorem 3.1]{AdrianiBaldi} is indeed justified. 

\begin{proposition}\label{prop:DensityEstimateUnp}
 For $\mathscr{S}^{(n)} = \frac{1}{n} \sum_{i=1}^n  \mathscr{V}^{(n)}_i$ with $\mathscr{V}^{(n)}_i = ({|Y^{(n)}_i|}^q, {|Y^{(n)}_i|}^p, U^{1/n}), \, Y^{(n)}_i \sim \bN_p$ i.i.d., $U$ uniformly distributed on $[0,1]$ independently of the $Y_i^{(n)}$, and $x = (x_1, x_2) \in \mathcal{J}_p$, $y \in (0,1]$, it holds that for sufficiently large $n \in \N$  the distribution of $\mathscr{S}^{(n)}$ has Lebesgue density
	$$ \mathscr{h}^{(n)}(x_1, x_2, y) = \frac{n^2}{2\pi} \,  y^{-1}{(\det \mathfrak{H}_{x})}^{-1/2} \, e^{- n \, \mathcal{I}_{\mathscr{S}}(x_1, x_2, y)} \, (1 + o(1)),$$
where $\mathcal{I}_{\mathscr{S}}(x_1, x_2, y) := [\Lambda_p^*(x) - \log(y)]$ and $\mathfrak{H}_{x} :=  \mathcal{H}_{\tau}\Lambda_p(\tau(x))$ as in \eqref{eq:HessX}.
\end{proposition}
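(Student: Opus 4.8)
The plan is to reduce the three-dimensional density $\mathscr{h}^{(n)}$ of $\mathscr{S}^{(n)}$ to the two-dimensional density $h^{(n)}$ of $S^{(n)}$ from Proposition~\ref{prop:DensityEstimate}, exploiting that the third coordinate $U^{1/n}$ is independent of the first two. Write $\mathscr{S}^{(n)} = (S^{(n)}, T^{(n)})$ where $S^{(n)} = \frac{1}{n}\sum_{i=1}^n V^{(n)}_i \in \R^2$ carries the $({|Y_i^{(n)}|}^q, {|Y_i^{(n)}|}^p)$ coordinates and $T^{(n)} := U^{1/n}$ is a single scalar, independent of $S^{(n)}$. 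By independence, the joint density factorizes as $\mathscr{h}^{(n)}(x_1, x_2, y) = h^{(n)}(x_1, x_2) \cdot g^{(n)}(y)$, where $g^{(n)}$ is the Lebesgue density of $U^{1/n}$ on $(0,1]$. An elementary change of variables gives $g^{(n)}(y) = n\,y^{n-1}$ for $y \in (0,1]$. First I would record this factorization carefully, noting that it holds exactly (not just asymptotically), since the only approximation enters through $h^{(n)}$.

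Next I would substitute the estimate from Proposition~\ref{prop:DensityEstimate}, which for $x = (x_1,x_2) \in \mathcal{J}_p$ and $n$ sufficiently large gives
$$
h^{(n)}(x_1, x_2) = \frac{n}{2\pi}\,{(\det \mathfrak{H}_{x})}^{-1/2}\, e^{-n\,\Lambda_p^*(x)}\,(1 + o(1)).
$$
Multiplying by $g^{(n)}(y) = n\,y^{n-1}$ yields
$$
\mathscr{h}^{(n)}(x_1, x_2, y) = \frac{n^2}{2\pi}\,{(\det \mathfrak{H}_{x})}^{-1/2}\, y^{n-1}\, e^{-n\,\Lambda_p^*(x)}\,(1 + o(1)).
$$
The final cosmetic step is to rewrite $y^{n-1} = y^{-1}\,y^{n} = y^{-1}\,e^{n\log y}$, so that $y^{n-1}\,e^{-n\Lambda_p^*(x)} = y^{-1}\,e^{-n(\Lambda_p^*(x) - \log y)} = y^{-1}\,e^{-n\,\mathcal{I}_{\mathscr{S}}(x_1,x_2,y)}$, which is exactly the claimed form. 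This matches the statement, with the prefactor $y^{-1}{(\det\mathfrak{H}_x)}^{-1/2}$ and the exponent $\mathcal{I}_{\mathscr{S}}$ appearing as asserted.

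The only genuine point requiring care — and the main (though modest) obstacle — is the uniformity of the $o(1)$ term: Proposition~\ref{prop:DensityEstimate} provides a pointwise statement in $x \in \mathcal{J}_p$, and one must ensure that multiplying by the deterministic factor $g^{(n)}(y)$ does not disturb it, and that the $(1+o(1))$ is understood in the same (pointwise in $(x_1,x_2,y)$) sense. Since $g^{(n)}(y)$ is exact, no extra error is introduced, and the factorization is legitimate because the product of a density on $\R^2$ and a density on $(0,1]$ is the joint density of the independent pair, with the $o(1)$ inherited verbatim from the two-dimensional estimate. I would also remark that the hypotheses ($Y_i^{(n)} \sim \bN_p$ i.i.d., $U$ uniform on $[0,1]$ independent of them) are precisely what make Lemma~\ref{lem:ProbRep} and hence the representation \eqref{eq:VnScr} valid, so the independence used here is built into the construction. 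This is the entire proof; it is short because the work has already been done in Proposition~\ref{prop:DensityEstimate}.
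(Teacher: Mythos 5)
Your proof is correct and follows exactly the same route as the paper: factorize the density of $\mathscr{S}^{(n)}$ using the independence of $U^{1/n}$ from $S^{(n)}$, compute the density $n\,y^{n-1}$ of $U^{1/n}$ directly, substitute the estimate from Proposition~\ref{prop:DensityEstimate}, and rewrite $y^{n-1}$ as $y^{-1}e^{n\log y}$ to absorb it into the exponent as $-\log y$ inside $\mathcal{I}_{\mathscr{S}}$. The additional remarks on the $o(1)$ term are sound but go slightly beyond what the paper spells out.
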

\begin{proof}
By direct calculation, we can see for $y\in[0,1]$ that $\Pro\left(U^{1/n} \le y\right) = \Pro\left(U \le y^n\right) = y^n$, giving that the density of $U^{1/n}$ is given by $f_{U^{1/n}}(y) = n \, y^{n-1}$. As $U^{1/n}$ is independent of the $Y_i^{(n)}$, and thereby also of $S^{(n)}=({|Y^{(n)}_i|}^q, {|Y^{(n)}_i|}^p)$, the density of $\mathscr{S}^{(n)} = \frac{1}{n} \sum_{i=1}^n  ({|Y^{(n)}_i|}^q, {|Y^{(n)}_i|}^p, U^{1/n})$ is given by the product of their densities, hence
$$ \mathscr{h}^{(n)}(x_1, x_2, y) = h^{(n)}(x_1, x_2) f_{U^{1/n}}(y) =\frac{n^2}{2\pi} \,  y^{-1}{(\det \mathfrak{H}_{x})}^{-1/2} \, e^{- n \, [\Lambda_p^*(x) - \log(y)]} \, (1 + o(1)).$$
This completes our proof.
\end{proof}
\vspace{0.25cm}
\section{Proof of the Main Result for $\ell_p^n$-spheres}\label{sec:ProofMainResult1}

In \eqref{eq:ProbRepDevInt} we have reformulated the deviation probability $\Pro\left(n^{1/p-1/q} \, \|Z^{(n)}\|_q > z\right) $ as an integral of the density estimate $h^{(n)}$ of the probabilistic representation $S^{(n)}$ over the deviation area  $D_{z}$. In Proposition \ref{prop:DensityEstimate} we have then given $h^{(n)}$ explicitly. For the proof of Theorem \ref{thm:MainResult1} it remains to calculate that integral. To do so, the integral will be split up into a neighbourhood $B_z$ of the point $z^*$, that has been shown in Lemma \ref{lem:UniqueMinCl} to be the infimum of $\Lambda_p^*$ over $\bar{D}_z$, and its complement $B_z^c$. The LDP from Proposition \ref{prop:KPT-LDP-Cnp} will be used to show the negligibility of the integral outside of the neighbourhood of $z^*$. Within the neighbourhood $B_z$, we use a result from Adriani and Baldi \cite{AdrianiBaldi}, which uses the Weingarten maps of the planar curves given by the boundary of $D_{z} \cap B_z$ and the level set of $\Lambda_p^*$ at $z^{*}$, to compute the integral. Following that, we will give these Weingarten maps explicitly, finishing our proof. %
\begin{proof}[Proof of Theorem \ref{thm:MainResult1}] 
We assume the set-up of Theorem \ref{thm:MainResult1} and use the reformulation \eqref{eq:ProbRepDevInt} to proceed by considering $ \Pro\left(S^{(n)} \in D_{z}\right)$. Let $B_z \subset \R^2$ be an open neighbourhood around $z^*$, small enough that $B_z \subset \mathcal{J}_p$. Then it holds that 
	\begin{equation} \label{eq:SplitIntegral}
	\Pro(S^{(n)} \in D_{z}) = \int_{D_{z}} h^{(n)}(x) \, \dint x =  \int_{D_{z} \cap \, B_z} h^{(n)}(x) \, \dint x +  \int_{D_{z} \cap \,  B_z^c} h^{(n)}(x) \, \dint x.
	\end{equation}
Since $z^* \notin B_z^c$, by Lemma \ref{lem:UniqueMinCl} , there exists an $\eta >0$, such that
$$ \inf_{y \in D_{z} \cap \, B_z^c} \Lambda_p^*(y) >\Lambda_p^*(z^*) + \eta,$$
and thus, by the LDP in Proposition \ref{prop:KPT-LDP-Cnp}, it holds that 
$$ \limsup_{n \to \infty} \frac{1}{n} \log \Pro(S^{(n)} \in D_{z} \cap \,B_z^c) \le - \inf_{y \, \in \,  D_{z} \cap \, B_z^c} \Lambda_p^*(y) \le - \Lambda_p^*(z^*) - \eta.$$
This gives us that 
\begin{equation} \label{eq:PropRepIntCompl}
\Pro \left(S^{(n)} \in D_{z} \cap \, B_z^c \right) \le e^{-n \, \Lambda_p^*(z^*) - n \, \eta} \,  (1+ o(1)) = \frac{1}{e^{n \,\eta}} \, e^{-n \Lambda_p^*(z^*)} (1+ o(1)) .
\end{equation} 
Furthermore, by our density estimate in Proposition \ref{prop:DensityEstimate}, it holds that 
	\begin{equation}\label{eq:AdrianiBaldiInt}
	\int_{D_{z} \cap \, B_z} h^{(n)}(x) \, \dint x = \frac{n}{2\pi} \, \int_{D_{z} \cap \, B_z}  {(\det \mathfrak{H}_{x})}^{-1/2} \, e^{- n \, \Lambda_p^*(x)}  \, \dint x  (1 + o(1)). 
	\end{equation}
	To calculate this explicitly, we will rely on a technique established in \cite[Proof of Theorem 4.4]{AdrianiBaldi}. Therein, an asymptotic integral expansion of Bleistein and Handelsmann \cite[Equation (8.3.63)]{Bleistein} for Laplace  integrals is reformulated via the Weingarten maps of the integration area and the level set of the exponential function at its minimum, both seen as hypersurfaces. We will present it as one concise result, similar to that formulated in \cite[Lemma 4.6]{LiaoRamanan}.
	\begin{proposition} \label{prop:AdrianiBaldiWeingarten} Let $D\subset \R^d$ be a bounded domain such that $\partial D$ is a differentiable hypersurface in $\R^d$. Furthermore, let $g:\R^d \to \R$ be a differentiable function and $\phi: D \to [0, \infty)$ a nonnegative function that is twice differentiable and attains a unique infimum over $\overline{D}$ at $x^* \in \partial D$. Define the hypersurfaces %
	$$\mathscr{C}_{D}=\partial D  \qquad \text{ and } \qquad \mathscr{C}_\phi=\{x \in \R^d: \phi(x) = \phi(x^*)\},$$
	and denote by $L_{D}$ and $L_\phi$ their respective Weingarten maps at $x^*$. Then, for sufficiently large $n \in \N$, it holds that
	$$\int_{D} g(x) \, e^{-n\, \phi(x)} \, \dint x = \frac{{(2\pi)}^{(d-1)/2} \, \det(L_\phi^{-1}(L_\phi - L_{D}))^{-1/2}}{n^{(d+1)/2} \, \langle {\mathcal{H}_x\,\phi(x^*)}^{-1} \, \nabla_x \phi (x^*), \nabla_x \phi (x^*) \rangle^{1/2}} \, g(x^*) \, e^{-n \, \phi(x^*)}(1 + o(1)).$$
	\end{proposition}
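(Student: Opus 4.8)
The statement is, up to notation, the boundary-point Laplace asymptotic formula of Bleistein and Handelsman \cite[(8.3.63)]{Bleistein} recast in the language of Weingarten maps, exactly as in \cite[Theorem 4.4]{AdrianiBaldi}; so the plan is to verify that our data $(D,g,\phi)$ fall under their hypotheses and to reproduce the three steps of that argument: (i) localize the integral to an arbitrarily small neighbourhood of $x^*$; (ii) after a change to coordinates adapted to $\partial D$, reduce the localized integral to a product of a one-dimensional Laplace integral in the inward normal direction and a $(d-1)$-dimensional Gaussian integral in the tangential directions; (iii) identify the resulting constant with the geometric quantities $\det(L_\phi^{-1}(L_\phi-L_{D}))$ and $\langle\mathcal{H}_x\phi(x^*)^{-1}\nabla_x\phi(x^*),\nabla_x\phi(x^*)\rangle$ in the statement.

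Step (i) is a standard Laplace tail estimate: since $\phi$ is continuous on the compact set $\overline D$ and attains its infimum there only at $x^*$, for every open neighbourhood $B\ni x^*$ there is $\eta>0$ with $\inf_{\overline D\setminus B}\phi\ge\phi(x^*)+\eta$, and since $g$ is bounded on $\overline D$ and $\vol_d(D)<\infty$, $\int_{D\setminus B}g(x)e^{-n\phi(x)}\dint x=O(e^{-n(\phi(x^*)+\eta)})$, which is negligible against the claimed right-hand side; so it suffices to analyse $\int_{D\cap B}g(x)e^{-n\phi(x)}\dint x$ with $B$ as small as convenient.

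For (ii)--(iii), shrink $B$ and parametrize $D$ near $x^*$ by coordinates $(u,s)\in\R^{d-1}\times[0,\infty)$, with $u$ running over an orthonormal frame of the tangent hyperplane $T:=T_{x^*}\partial D$, $s$ the depth into $D$ along the inward unit normal $\nu$, and the coordinate map of unit Jacobian at $x^*$. Since $x^*$ is a boundary minimum of $\phi$ over $\overline D$, the first-order (Lagrange) condition forces $\nabla_x\phi(x^*)=\lambda\nu$ with $\lambda=|\nabla_x\phi(x^*)|\ge0$; we may assume $\lambda>0$, as $\lambda=0$ would make the denominator in the statement vanish. In particular $\nabla_x\phi(x^*)\perp\partial D$, so the level hypersurface $\mathscr{C}_\phi$ is tangent to $\mathscr{C}_D=\partial D$ at $x^*$, sharing the hyperplane $T$. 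A second-order Taylor expansion in $(u,s)$ gives $\phi=\phi(x^*)+\lambda s+\tfrac12\langle Qu,u\rangle+(\text{remainder})$, where $Q$ is the Hessian of $\phi|_{\partial D}$ at $x^*$ and the remainder is negligible because the integrand concentrates at scale $s\sim n^{-1}$, $u\sim n^{-1/2}$. The restriction-Hessian identity together with the shape-operator description of the Weingarten map of a regular level set (cf.\ Lemma \ref{lem:ImplCurves} for $d=2$) express $Q$, up to the usual sign conventions, as $\lambda(L_\phi-L_{D})$, which is positive definite since $x^*$ is a strict constrained minimum; in particular $\det Q=\lambda^{d-1}\det L_\phi\cdot\det(L_\phi^{-1}(L_\phi-L_{D}))$. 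Evaluating $\int_0^\infty e^{-n\lambda s}\dint s=(n\lambda)^{-1}$, $\int_{\R^{d-1}}e^{-\frac n2\langle Qu,u\rangle}\dint u=(2\pi)^{(d-1)/2}n^{-(d-1)/2}(\det Q)^{-1/2}$, freezing $g$ at $x^*$, and re-expressing the powers of $\lambda$ and $\det L_\phi$ by means of the bordered-Hessian identity $\det\!\big(\begin{smallmatrix}\mathcal{H}_x\phi(x^*)&\nabla_x\phi(x^*)\\ \nabla_x\phi(x^*)^{\top}&0\end{smallmatrix}\big)=-\det\mathcal{H}_x\phi(x^*)\,\langle\mathcal{H}_x\phi(x^*)^{-1}\nabla_x\phi(x^*),\nabla_x\phi(x^*)\rangle$ (which links $\lambda$, $\mathcal{H}_x\phi$ and the level-set curvature $\det L_\phi$) yields the asserted formula.

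The main obstacle is the bookkeeping in (iii): fixing mutually compatible sign and normalization conventions for $L_\phi$, $L_{D}$ and the second fundamental form of $\partial D$, and converting $\lambda^{d-1}\det(\text{constrained Hessian})$ into exactly the combination $\det(L_\phi^{-1}(L_\phi-L_{D}))\big/\langle\mathcal{H}_x\phi(x^*)^{-1}\nabla_x\phi(x^*),\nabla_x\phi(x^*)\rangle$ appearing in the statement; one also has to justify uniformly that the cubic Taylor remainder of $\phi$ and the curvature of $\partial D$ contribute only a multiplicative $1+o(1)$. As all of this is carried out in \cite[Section 4]{AdrianiBaldi} under precisely the hypotheses assumed here, the cleanest exposition is to check that our triple $(D,g,\phi)$ satisfies those assumptions — in the application $D$ is the bounded set $D_z\cap B_z$ with smooth boundary, $g=(\det\mathfrak{H}_{x})^{-1/2}$ is smooth and positive on $\mathcal{J}_p$, and $\phi=\Lambda_p^*$ is smooth, nonnegative and strictly convex on $\mathcal{J}_p$ with the unique minimum over $\overline{D_z}$ established in Lemma \ref{lem:UniqueMinCl} — and to invoke \cite[Theorem 4.4]{AdrianiBaldi}, whose analytic core is the Bleistein--Handelsman expansion \cite[(8.3.63)]{Bleistein}.
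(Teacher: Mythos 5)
Your approach coincides with the paper's: the result is obtained by invoking the Bleistein--Handelsman boundary Laplace asymptotic \cite[Equation (8.3.63)]{Bleistein} and then rewriting the resulting constants in terms of Weingarten maps as in Adriani--Baldi \cite[Equation (4.6), Proof of Theorem 4.4]{AdrianiBaldi}. The paper's proof is exactly this two-reference citation with no independent derivation, so your sketch of the localization step, the $(u,s)$ boundary-adapted coordinates, the product reduction to a one-dimensional Laplace integral times a $(d-1)$-dimensional Gaussian, and the identity $\det Q=\lambda^{d-1}\det L_{\phi}\,\det\!\bigl(L_{\phi}^{-1}(L_{\phi}-L_{D})\bigr)$ supplies detail the paper leaves implicit, while remaining consistent with the cited argument.
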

	The proof of this is given by first applying the result from \cite[Equation (8.3.63)]{Bleistein} for Laplace-type integrals and then using the reformulation of the terms therein from \cite[Equation (4.6)]{AdrianiBaldi} with respect to the Weingarten map.\\
	\\
	Let us now check that the above conditions hold for the integral in \eqref{eq:AdrianiBaldiInt}. 	
	We have that $D_{z} \cap \, B_z$ is bounded, and for $z > m_{p,q}$, we can write $\partial D_{z}$ as the graph of the infinitely differentiable function $f:(0,\infty) \to (0,\infty)$ with $f(t_1)= z^{-p}t_1^{p/q}$, thus both $\partial D_{z}$ and $\partial (D_{z} \cap \, B_z)$ are differentiable planar curves. %
As discussed in Section \ref{sec:Preliminaries}, it holds for $x \in \mathcal{J}_p$ that $ g_x(\tau) := \langle x, \tau \rangle - \Lambda_p(\tau)$ has a unique supremum $\tau(x)$, i.e. $x - \nabla_\tau \Lambda_p(\tau)=0$ has a unique solution in $(x, \tau)$. It was also dicussed that  $\mathcal{H}_\tau \Lambda_p(\tau)$ is invertible for all $\tau \in \mathcal{D}_p$, thus, it follows from the implicit function theorem that $x \mapsto \tau(x)$ is as differentiable in $x$ as $(x, \tau) \mapsto (x - \nabla_\tau \Lambda_p(\tau))$ is in $\tau$. As $\Lambda_p$ is the logarithm of the joint m.g.f. of the $V_i^{(n)}$ from \eqref{eq:Vn}, the components of its derivatives are themselves infinitely differentiable within $\mathcal{D}_p$ by the standard properties of the m.g.f. (see e.g. \cite[Theorem 5.4]{Gupta2010}), yielding that $\tau(x)$ is infinitely differentiable on $\mathcal{J}_p$.  This overall gives us the infinite differentiability of $\Lambda_p(\tau(x))$ and thereby also of $\Lambda_p^*(x)$ on $\mathcal{J}_p$. For $B_z$ chosen small enough, it then follows for any $z > m_{p,q}$ with $z^* \in \mathcal{J}_p$ that $\Lambda_p^*$ is twice differentiable on $D_{z} \cap \, B_z$. %
 Nonnegativity of $\Lambda_p^*$ follows directly by the standard properties of rate functions (apply arguments from e.g. \cite[Lemma 2.2.5]{DZ} in $\R^2$). By the infinite differentiability of $\Lambda_p(\tau(x))$ in $x$, we get the differentiability of $g(x):={(\det \mathfrak{H}_{x})}^{-1/2} = {(\det\mathcal{H}_\tau\Lambda_p(\tau(x))}^{-1/2}$ in $x$. %
Lemma \ref{lem:UniqueMinCl} gives us the uniqueness of $z^*=(z^q,1) \,\in \partial (D_{z} \cap \, B_z)$ as an infimum on $\overline D_{z}$ and $\overline{D_{z} \cap \, B_z}$.
 
 Thus, in view of the above, we can use Proposition \ref{prop:AdrianiBaldiWeingarten} for $D = D_{z} \cap \, B_z \subset \R^2$ with $g(x)= {(\det \mathfrak{H}_{x})}^{-1/2}$, $\phi(x) = \Lambda_p^*(x)$, and $x^* = z^*$, and get that%
	\begin{eqnarray}\label{eq:IntPostWeingarten1}
	\displaystyle  \int_{D_{z} \cap \, B_z} h^{(n)}(x) \, \dint x &= \displaystyle \frac{n}{2\pi} \, \frac{{(2\pi)}^{1/2} \, \det(L_\Lambda^{-1}(L_\Lambda - L_{D}))^{-1/2} \, (\det \mathfrak{H}_{z^*})^{-1/2} \, e^{-n \, \Lambda_p^*(z^*)}}{n^{3/2} \, \langle {\mathcal{H}_x\,\Lambda_p^* (z^*)}^{-1} \, \nabla_x\,\Lambda_p^*(z^*), \nabla_x\,\Lambda_p^*(z^*) \rangle^{1/2}} \, (1 + o(1)), \quad
	\end{eqnarray}
	for the respective Weingarten maps at $z^*$ of the curves
	$$\mathscr{C}_{D}=\partial (D_{z} \cap \, B_z)  \qquad \text{ and } \qquad \mathscr{C}_{\Lambda}=\{x \in \R^2: \Lambda_p^*(x) = \Lambda_p^*(z^*)\}.$$
	Let us present the following identities for some of the terms in the fraction above, resulting from the definition of $\tau(x)$ and the properties of the Legendre-Fenchel transform:
	\begin{lemma} \label{lem:Ableitungen}
	It holds that
	
	\begin{enumerate}
	\item[i)]{$\nabla_{x} \Lambda_p^*(x) = \tau(x),$}\\
	\item[ii)]{$\mathcal{H}_{x}  \Lambda_p^*(x) = {\mathfrak{H}_{x}}^{-1}.$
	}
	\end{enumerate}
	\end{lemma}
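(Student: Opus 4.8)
The plan is to differentiate the closed-form expression for $\Lambda_p^*$ that is available on the admissible domain. Recall that for $x \in \mathcal{J}_p$ the point $\tau(x) \in \mathcal{D}_p$ is, by construction, the unique solution of $\nabla_\tau \Lambda_p(\tau) = x$, and that the supremum defining the Legendre--Fenchel transform is attained there, so that
\[
\Lambda_p^*(x) = \langle x, \tau(x) \rangle - \Lambda_p(\tau(x)), \qquad x \in \mathcal{J}_p.
\]
We have already argued above (via the implicit function theorem together with the infinite differentiability of $\Lambda_p$ on $\mathcal{D}_p$) that $x \mapsto \tau(x)$ is infinitely differentiable on $\mathcal{J}_p$, so all the differentiations performed below are legitimate.

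For \textit{i)} I would simply apply the product and chain rules to the displayed identity:
\[
\nabla_x \Lambda_p^*(x) = \tau(x) + (J_x \tau(x))^T x - (J_x \tau(x))^T \nabla_\tau \Lambda_p(\tau(x)) = \tau(x) + (J_x \tau(x))^T \big( x - \nabla_\tau \Lambda_p(\tau(x)) \big).
\]
Since $\nabla_\tau \Lambda_p(\tau(x)) = x$ by the defining property of $\tau(x)$, the second summand vanishes and we obtain $\nabla_x \Lambda_p^*(x) = \tau(x)$.

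For \textit{ii)} I would differentiate the identity from \textit{i)} once more, which gives $\mathcal{H}_x \Lambda_p^*(x) = J_x \tau(x)$. Differentiating instead the relation $\nabla_\tau \Lambda_p(\tau(x)) = x$ with respect to $x$ yields, by the chain rule, $\mathcal{H}_\tau \Lambda_p(\tau(x)) \, J_x \tau(x) = I$, i.e. $\mathfrak{H}_x \, J_x \tau(x) = I$. Since $\mathfrak{H}_x$ is positive definite, hence invertible, on $\mathcal{D}_p$ (as recalled in Section~\ref{sec:Preliminaries}), it follows that $J_x \tau(x) = \mathfrak{H}_x^{-1}$, and therefore $\mathcal{H}_x \Lambda_p^*(x) = \mathfrak{H}_x^{-1}$. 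I do not expect any genuine obstacle here: the only point requiring care is the justification for differentiating these identities, which is already in place from the smoothness of $\tau(\cdot)$ and $\Lambda_p$ and the invertibility of $\mathfrak{H}_x$ on $\mathcal{D}_p$; alternatively one could simply invoke the standard duality properties of the Legendre transform as in \cite[Theorem 26.5]{Rockafellar}, but the direct computation above keeps the argument self-contained.
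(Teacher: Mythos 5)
Your proof of part \emph{i)} is essentially the paper's own argument: write $\Lambda_p^*(x)=\langle x,\tau(x)\rangle-\Lambda_p(\tau(x))$, differentiate by product and chain rule, and use the stationarity condition $x-\nabla_\tau\Lambda_p(\tau(x))=0$ to kill the extra term. Your explicit transposes on $J_x\tau(x)$ are in fact cleaner than the paper's notation, which suppresses transposes by convention.

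For part \emph{ii)} you take a genuinely shorter route than the paper. The paper obtains one expression $\mathcal{H}_x\Lambda_p^*(x)=J_x\tau(x)$ from part \emph{i)}, then recomputes $\mathcal{H}_x\Lambda_p^*(x)$ a second time by differentiating the closed form $\langle x,\tau(x)\rangle-\Lambda_p(\tau(x))$ twice (producing and cancelling second-derivative terms like $\mathcal{H}_x\tau(x)\,[x-\nabla_\tau\Lambda_p(\tau(x))]$), arriving at $2J_x\tau(x)-J_x\tau(x)\,J_x\tau(x)\,\mathfrak{H}_x$, and equates the two expressions to solve for $J_x\tau(x)$. You instead differentiate the stationarity relation $\nabla_\tau\Lambda_p(\tau(x))=x$ directly in $x$, obtaining $\mathfrak{H}_x\,J_x\tau(x)=I$ in one line, and invoke invertibility of $\mathfrak{H}_x$ already established on $\mathcal{D}_p$. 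This is the same conclusion by a more economical computation; it avoids the extra bookkeeping of the double differentiation and makes the role of the implicit-function/inverse-function structure more transparent. Both routes rest on the same smoothness of $\tau(\cdot)$ and invertibility of $\mathfrak{H}_x$, and both are correct.
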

	\begin{proof}
 We start by showing that $\nabla_{x} \, \Lambda_p^*(x) = \tau(x)$. We have defined $\tau(x)$ as  the supremum of $[\langle x, \tau \rangle - \Lambda_p(\tau)]$ in $\tau \in \R^2$ (see \eqref{eq:ArgSupLegendre}), thus it follows that 
 \begin{equation} \label{eq:Grad=0}
 \nabla_\tau \big[\langle x, \tau \rangle - \Lambda_p(\tau)\big]\big{|}_{\tau = \tau(x)} = x - \nabla_\tau \Lambda_p(\tau(x)) =0. 
 \end{equation}
With this, it follows that
 \begin{eqnarray*}
	 \nabla_{x}  \Lambda_p^*(x) &=& \nabla_{x} \big[ \langle x, \tau(x) \rangle - \Lambda_p(\tau(x))  \big] \vphantom{\sum}\\
	&=&  \tau(x) + J_{x} \tau(x) \, x  - \nabla_{x} \Lambda_p(\tau(x))\vphantom{\sum}\\
	&=&  \tau(x) + J_{x} \tau(x) \, x  -  J_{x} \tau(x) \nabla_{\tau} \Lambda_p(\tau(x))\vphantom{\sum} \\
	&=&  \tau(x)+ J_{x} \tau(x) \,   \big[x - \nabla_{\tau} \Lambda_p(\tau(x))\big] \vphantom{\sum}\\
	&=& \tau(x).  \vphantom{\sum}
\end{eqnarray*} 
 Let us now prove that $\mathcal{H}_{x} \Lambda_p^*(x) = {\mathfrak{H}_{x}}^{-1}$.  On the one hand, it follows from the above that  
\begin{equation}\label{eq:HesseMatId1}
\mathcal{H}_{x} \Lambda_p^*(x) = J_{x}\tau(x),
\end{equation}
while on the other hand, it holds that
\begin{eqnarray} \label{eq:HesseMatId2}
	\nonumber \mathcal{H}_{x} \Lambda_p^*(x) &=&  \mathcal{H}_{x} \big[ \langle x, \tau(x) \rangle - \Lambda_p(\tau(x))  \big]\vphantom{\sum}\\
	\nonumber&=&  \mathcal{H}_{x} \big[ \langle x, \tau(x) \rangle \big] - \mathcal{H}_{x} \big[\Lambda_p(\tau(x))  \big]\vphantom{\sum}\\
	\nonumber&=&  J_{x} \big[ \nabla_x \langle x, \tau(x) \rangle \big] - J_{x} \big[ \nabla_x \Lambda_p(\tau(x))  \big]\vphantom{\sum}\\
	\nonumber&=& J_{x}\big[\tau(x) + J_{x} \tau(x) x  \big]  - J_{x}\big[J_{x} \tau(x) \,  \nabla_{\tau} \Lambda_p(\tau(x))\big] \vphantom{\sum}\\
	\nonumber&=& J_{x}\tau(x) + J_{x} \big[J_{x} \tau(x) x  \big] - \mathcal{H}_{x}\tau(x) \, \nabla_{\tau} \Lambda_p(\tau(x)) - J_{x}\tau(x) J_{x}\big[\nabla_{\tau} \Lambda_p(\tau(x))\big]\vphantom{\sum}\\
	\nonumber&=& 2J_{x}\tau(x)  + \mathcal{H}_{x}\tau(x) \big[x - \nabla_{\tau} \Lambda_p(\tau(x))\big] - J_{x}\tau(x) \,J_{x}\tau(x) \mathcal{H}_\tau\Lambda_p(\tau(x)) \vphantom{\sum}\\
	&=& 2J_{x}\tau(x)   - J_{x}\tau(x) \, J_{x}\tau(x) \,   \mathcal{H}_{\tau} \Lambda_p(\tau(x)). \vphantom{\sum}
\end{eqnarray} 
Equating the terms \eqref{eq:HesseMatId1} and \eqref{eq:HesseMatId2} yields
\begin{center}
$
\begin{array}{crcl}
&J_{x}\tau(x) &=& 2J_{x}\tau(x)   - J_{x}\tau(x) \, J_{x}\tau(x) \,   \mathcal{H}_{\tau} \Lambda_p(\tau(x)) \vphantom{\sum \limits_0}\\
\Leftrightarrow& 0 &=& J_{x}\tau(x)   - J_{x}\tau(x) \, J_{x}\tau(x) \,   \mathcal{H}_{\tau} \Lambda_p(\tau(x)) \vphantom{\sum \limits_0}\\
\Leftrightarrow& 0 &=&  I_2   - J_{x}\tau(x) \,   \mathcal{H}_{\tau} \Lambda_p(\tau(x)) \vphantom{\sum \limits_0}\\
\Leftrightarrow& J_{x}\tau(x) &=&   \mathcal{H}_{\tau} \Lambda_p(\tau(x)) ^{-1},\\
\end{array}
$
\end{center}
where $I_2$ denotes the identity matrix in $\R^2$. Again using \eqref{eq:HesseMatId1} on the above yields
$$\mathcal{H}_{x} \Lambda_p^*(x) = J_{x}\tau(x) =  \mathcal{H}_{\tau} \Lambda_p(\tau(x)) ^{-1} =\mathfrak{H}_{x}^{-1},$$
and thereby finishes the proof. 
 \end{proof}
	Via Lemma \ref{lem:Ableitungen}, we get 
	$$ \Big\langle {\mathcal{H}_x\,\Lambda_p^*(z^*)}^{-1} \, \nabla_x\,\Lambda_p^*(z^*), \nabla_x\,\Lambda_p^*(z^*) \Big\rangle = \Big\langle \mathfrak{H}_{z^*} \, \tau(z^*), \tau(z^*) \Big\rangle.$$
	With the definition of $\displaystyle \xi(z)^2$ in \eqref{eq:NormTermXi} the integral in \eqref{eq:IntPostWeingarten1} hence simplifies as follows:
		\begin{eqnarray}\label{eq:IntPostWeingarten2}
	\displaystyle \int_{D_{z} \cap \, B_z} h^{(n)}(x) \, \dint x &= \displaystyle \frac{1}{\sqrt{2\pi n} \,  \xi(z)}  \, {\big(\det(L_\Lambda^{-1}(L_\Lambda - L_{D})\big)}^{-1/2} \, e^{-n \,  \Lambda_p^*(z^*)} \, (1 + o(1)).
	\end{eqnarray}
	We see that it only remains to prove that $\det(L_\Lambda^{-1}(L_\Lambda - L_{D})) = \kappa(z)^2$. We proceed to calculate the Weingarten maps of the curves $\mathscr{C}_{D}$ and $\mathscr{C}_{\Lambda}$ explicitly. As discussed in Section \ref{subsec:Weingarten}, the Weingarten map of a planar curve at a point $x$ reduces to the absolute value of its curvature in $x$.  As previously mentioned, $\partial D_{z}$ is the graph of a function $f:(0,\infty) \to (0,\infty)$ with $f(t_1)= z^{-p}t_1^{p/q}$. Thus, the same holds locally for $\mathscr{C}_{D}=\partial (D_{z} \cap \, B_z)$ in a neighbourhood of $z^*$, so by the curvature formula for graphs of functions, as seen in Remark \ref{rmk:Curvature} ii), it holds that
	$$
	L_D = \frac{|f^{\prime \prime} (z^q)|}{(1 + f^{\prime}(z^q)^2)^{3/2} },
	$$
	where
$$f^{\prime}(t_1)^2 = {\left(pq^{-1} \, z^{-p} \,  {t_1}^{(p/q) -1} \right)}^2 \Rightarrow f^{\prime}(z^q)^2 = p^2q^{-2} z^{-2q}, $$
and 
$$f^{\prime \prime}(t_1) = pq^{-1} \left(pq^{-1}-1\right) z^{-p} \,  {t_1}^{(p/q) -2} \Rightarrow f^{\prime \prime}(z^q) = (p^2 - pq)q^{-2} \, z^{-2q}.$$
This yields
\begin{equation}\label{eq:WeingartenD2}
L_D = \frac{|(p^2 - pq)q^{-2} \, z^{-2q}|}{(1 + p^2q^{-2} z^{-2q})^{3/2}} =  \frac{|pq(p-q)z^q|}{(z^{2q} + p^2q^{-2})^{3/2}}.
\end{equation}
The curve $\mathscr{C}_{\Lambda}$ is the zero set of the function $F(x) := \Lambda_p^*(x) - \Lambda_p^*(z^*)$. From Lemma \ref{lem:Ableitungen} we know that 
$$\displaystyle(F_{[1,0]}, F_{[0,1]}) = \Big(\frac{\partial}{\partial x_1} \Lambda_p^*(z^*), \, \frac{\partial}{\partial x_2} \Lambda_p^*(z^*) \Big) =  \tau(z^*)$$
and
$$\left(\begin{array}{rrr} 
F_{[2,0]} & F_{[1,1]} \\ 
F_{[1,1]} & F_{[0,2]}  \\ 
\end{array}\right)  =  \displaystyle \left(\begin{array}{rrr} 
\displaystyle \frac{\partial^2}{\partial^2 x_1} \Lambda_p^*(z^*)& \displaystyle \frac{\partial^2}{\partial x_2 \partial x_1 } \Lambda_p^*(z^*) \\ 
\\
\displaystyle \frac{\partial^2}{\partial x_1 \partial x_2} \Lambda_p^*(z^*) & \displaystyle \frac{\partial^2}{\partial^2 x_2} \Lambda_p^*(z^*) \\ 
\end{array}\right)  = \mathfrak{H}_{z^*}^{-1},$$
\text{}\\
for derivatives $F_{[i,j]}=F_{[i,j]}(z^*)$ as in \eqref{eq:AblNotation}. Hence, by the curvature formula for implicit curves from Lemma \ref{lem:ImplCurves} and Remark \ref{rmk:Curvature} i), we get 
\begin{equation}\label{eq:WeingartenLambda}
L_\Lambda =  \displaystyle \frac{\left |\tau(z^*)_2^2 \left(\mathfrak{H}_{z^*}^{-1}\right)_{11} - 2\tau(z^*)_1 \tau(z^*)_2 \left(\mathfrak{H}_{z^*}^{-1}\right)_{12} + \tau(z^*)_1^2 \left(\mathfrak{H}_{z^*}^{-1}\right)_{22}\right| }{{\big(\tau(z^*)_1^2 + \tau(z^*)_2^2\big)}^{3/2}}.
\end{equation}
Since both $L_D $ and $L_\Lambda$ are one-dimensional, it follows from \eqref{eq:WeingartenD2} and \eqref{eq:WeingartenLambda} that 
$$
\det(L_\Lambda^{-1}(L_\Lambda - L_{D})) = L_\Lambda^{-1}(L_\Lambda - L_{D}) = 1- \frac{L_D}{L_\Lambda} = \kappa(z)^2.
$$
for $\kappa(z)^2$ as in \eqref{eq:NormTermKappa}. It now follows with \eqref{eq:IntPostWeingarten2} that
\begin{eqnarray}\label{eq:PropRepIntWithin}
	\displaystyle \int_{D_{z} \cap \, B_z} h^{(n)}(x) \, \dint x &= \displaystyle \frac{1}{\sqrt{2\pi n} \, \xi(z) \, \kappa(z)}   e^{-n \Lambda_p^*(z^*)} \, (1 + o(1)).
	\end{eqnarray}
Comparing  \eqref{eq:PropRepIntWithin} with the upper bound of the integral outside of $B_z$ in \eqref{eq:PropRepIntCompl}, we can see that the integral over $B_z^c$ is negligible for large $n \in \N$. Thus, combining \eqref{eq:SplitIntegral}, \eqref{eq:PropRepIntCompl} and \eqref{eq:PropRepIntWithin} finishes the proof of Theorem \ref{thm:MainResult1}.%
\end{proof}


\section{Proof of the Main Result for $\ell_p^n$-balls}\label{sec:ProofMainResult2}
We use the notation and definitions established in Sections~\ref{sec:Preliminaries} through~\ref{sec:ProbRep}. Let $1 \le q < p < \infty $ and $z > m_{p,q}$ be such that $z^{*} \in \mathcal{J}_{p}$. We proceed similarly to the previous proof, using the reformulation of $\mathbb{P}\left (n^{1/p-1/q} \, \|\mathscr{Z}^{(n)}\|_{q} > z\right )$ from~\eqref{eq:ProbRepDevIntUnp} in conjunction with the density approximation from Proposition~\ref{prop:DensityEstimateUnp}. The resulting integral over $\mathscr{D}_{z}$ is again split into a neighbourhood of the minimum of $\mathcal{I}_{\mathscr{S}}$ over $\overline{\mathscr{D}}_{z}$ and its complement, which, according to Lemma~\ref{lem:UniqueMinCl}, is attained at $z^{**}=(z^{q},1,1)$. For the integral within that neighbourhood, we apply a result of Breitung and Hohenbichler \cite{Breitung1989}, which yields an integral approximation under less restrictive differentiability conditions than those in Proposition~\ref{prop:AdrianiBaldiWeingarten}. This result is again geometric in nature, as the behaviour of the density on $\partial \mathscr{D}_{z} $ still heavily dictates the value of the overall approximation. However, since this result is formulated for a certain neighbourhood of the origin, we first need to construct a sufficient transformation, mapping our deviation area into such a neighbourhood. After that, we calculate the specific approximation in our setting.
\begin{proof}[Proof of Theorem \ref{thm:MainResult2}] 
We assume the set-up of Theorem \ref{thm:MainResult2} and use the reformulation \eqref{eq:ProbRepDevIntUnp} to proceed by considering $ \Pro\left(\mathscr{S}^{(n)} \in \mathscr{D}_{z}\right)$. Let $\mathscr{B}_z \subset \R^3$  be an open neighbourhood around $z^{**}=(z^q,1,1)$ small enough that the first two coordinates of points within $\mathscr{B}_z$ lie in $\mathcal{J}_p$ and the third is positive. Then it holds by Proposition \ref{prop:DensityEstimateUnp} that
\begin{eqnarray} \label{eq:SplitIntegralUnp}
	\Pro\left(\mathscr{S}^{(n)} \in \mathscr{D}_z\right) &=& \int_{\mathscr{D}_{z} \cap \, \mathscr{B}_z } \mathscr{h}^{(n)}(x_1, x_2, y) \, \dint x_1 \dint x_2 \, \dint y +  \int_{\mathscr{D}_{z} \cap \,  \mathscr{B}_z^c} \mathscr{h}^{(n)}(x_1, x_2, y) \, \dint x_1 \dint x_2 \, \dint y . \quad
	\end{eqnarray}
As in the proof of Theorem \ref{thm:MainResult1}, we can follow from Lemma \ref{lem:UniqueMinCl} ii) and the LDP in Proposition \ref{prop:KPT-LDP-Unp} that there is an $\eta >0$, such that 
\begin{equation} \label{eq:PropRepIntComplUnp}
\Pro\left(\mathscr{S}^{(n)} \in \mathscr{D}_{z} \cap \, \mathscr{B}_z^c\right) \le e^{- n \, \mathcal{I}_{\mathscr{S}}(z^{**}) - n \eta} (1+ o(1)) = \frac{1}{e^{n \,\eta}} \, e^{-n \,  \Lambda_p^*(z^*)} (1+ o(1)) ,
\end{equation} 
with $\mathcal{I}_{\mathscr{S}}(t) = \big[ \Lambda_p^*(t_1, t_2) -  \log(t_3)\big],$ as defined in Lemma \ref{lem:UniqueMinRFUnp}. Let us now consider the first integral in \eqref{eq:SplitIntegralUnp}. Since $z^* \in \mathcal{J}_p$, for sufficiently small $\mathscr{B}_z$, we have that $x =(x_1, x_2) \in \mathcal{J}_p$ and $y \in (0,1]$. By the density approximation from Proposition \ref{prop:DensityEstimateUnp}, it then holds that 
	$$ \int_{\mathscr{D}_{z} \cap \, B_z} \hspace{-0.1cm} \mathscr{h}^{(n)}(x_1, x_2, y) \, \dint x_1 \, \dint x_2 \, \dint y =  \frac{n^2}{2\pi} \, \int_{\mathscr{D}_{z} \cap \, B_z}  \hspace{-0.1cm} y^{-1}{(\det \mathfrak{H}_{x})}^{-1/2} \, e^{- n \,  \mathcal{I}_{\mathscr{S}}(x_1, x_2, y)} \, \dint x_1 \, \dint x_2 \, \dint y \, (1 + o(1)).$$
As we have seen in Lemma \ref{lem:UniqueMinCl}, $\mathcal{I}_{\mathscr{S}}$ attains its infimum on $\overline{\mathscr{D}}_{z}$ at $z^{**}$.  However, we cannot use the result of Adriani and Baldi from Proposition \ref{prop:AdrianiBaldiWeingarten} here, since at $z^{**}$ the boundary of $\mathscr{D}_{z} \cap \mathscr{B}_z$ is not differentiable, and thereby not smooth. Hence, we use the following asymptotic integral approximation results based on Breitung and Hohenbichler \cite{Breitung1989}, which gives a Laplace integral approximation very similar to that in Liao and Ramanan \cite[Lemma 5.1]{LiaoRamanan}, but under weaker conditions.\\
\begin{proposition} \label{prop:Breitung}
		Let $F \subset \R^3$ be a bounded closed set containing the origin in its interior. If 
		\begin{enumerate}[label=(\alph*)]
		\item $f:F\to \R$ and $g:F \to \R$ are continuous functions with $g(\mathbf{0}) \ne 0$, where $\mathbf{0} := (0,0,0),$
		\item $f(x) > f(\mathbf{0})$ for all $x \in F \cap (\R^2_+ \times \R)\setminus\{0\}$,
		\item there is a neighbourhood $V \subset F$ of $\mathbf{0}$ in which $f$ is twice continuously differentiable,
		\item $f_{[1,0,0]}>0$, $f_{[0,1,0]} >0$, and $f_{[0,0,2]} >0$, with derivatives $f_{[i,j,k]} = f_{[i,j,k]}(\mathbf{0}) $ as in \eqref{eq:AblNotation},
		\end{enumerate}
		then it holds that 
		$$\int_{F \cap (\R^2_+ \times \R)} g(x) e^{-nf(x)} \, \dint x = \frac{\sqrt{2\pi}}{n^{5/2}} \, \frac{g(x^*)}{f_{[1,0,0]} f_{[0,1,0]} \sqrt{f_{[0,0,2]}}} e^{-n f(x^*)} (1+ o(1)), $$
\end{proposition}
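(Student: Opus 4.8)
The plan is to run a Laplace-type argument tailored to the fact that the minimiser of $f$ over the admissible region $F\cap(\R^2_+\times\R)$ sits at the corner point $\mathbf 0$, where the first two coordinates are one-sided and the third is unconstrained; by (a)--(c) this minimiser is the unique one on the admissible region, so in the conclusion one has $x^*=\mathbf 0$. First I would fix a ball $B_\delta$ around $\mathbf 0$ with $\overline{B_\delta}\subset V\cap\mathrm{int}(F)$ (so that near $\mathbf 0$ the domain coincides with the cone $B_\delta\cap(\R^2_+\times\R)$ and $f$ is $C^2$ there) and split
$$\int_{F\cap(\R^2_+\times\R)} g(x)\, e^{-nf(x)}\,\dint x = \int_{B_\delta\cap(\R^2_+\times\R)} g(x)\, e^{-nf(x)}\,\dint x + \int_{(F\setminus B_\delta)\cap(\R^2_+\times\R)} g(x)\, e^{-nf(x)}\,\dint x.$$
Since $g$ is continuous on the compact set $F$ it is bounded there, and since $(F\setminus B_\delta)\cap(\R^2_+\times\R)$ is compact with $f>f(\mathbf 0)$ on it by (b), the continuous function $f$ attains a minimum $f(\mathbf 0)+c$ with $c>0$ on this set; hence the second integral is $O\!\left(e^{-n(f(\mathbf 0)+c)}\right)$, which is exponentially smaller than the target order $n^{-5/2}e^{-nf(\mathbf 0)}$ and is absorbed into the $(1+o(1))$.

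For the local integral I would first extract the first-order data at $\mathbf 0$. Applying (b) to the points $(0,0,t)\in F\cap(\R^2_+\times\R)$ for small $|t|$ shows that $t\mapsto f(0,0,t)$ is a $C^2$ function with a strict local minimum at $t=0$, whence $f_{[0,0,1]}(\mathbf 0)=0$; combined with (d) this gives $\nabla_x f(\mathbf 0)=(a_1,a_2,0)$ with $a_1:=f_{[1,0,0]}>0$, $a_2:=f_{[0,1,0]}>0$, and $b:=f_{[0,0,2]}>0$. Taylor's theorem on $V$ then yields
$$f(x)-f(\mathbf 0) = a_1 x_1 + a_2 x_2 + \tfrac{1}{2}\, b\, x_3^2 + \rho(x),$$
where $\rho$ collects the remaining quadratic contributions (in $x_1^2,x_2^2,x_1x_2,x_1x_3,x_2x_3$) together with a $o(|x|^2)$ error. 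On $B_\delta\cap(\R^2_+\times\R)$ with $\delta$ small enough, using $x_i^2\le\delta x_i$ and $|x_i x_3|\le \tfrac{\mu}{2}x_i^2 + \tfrac{1}{2\mu}x_3^2$ (choosing $\mu$ first to tame the $x_3^2$-coefficient, then $\delta$ small), one bounds $|\rho(x)|\le \tfrac{1}{2}a_1 x_1 + \tfrac{1}{2}a_2 x_2 + \tfrac{1}{4}b\, x_3^2$, so that
$$f(x)-f(\mathbf 0) \ \ge\ \tfrac{1}{2}a_1 x_1 + \tfrac{1}{2}a_2 x_2 + \tfrac{1}{4}b\, x_3^2 \qquad \text{on } B_\delta\cap(\R^2_+\times\R).$$
This lower bound is what will make the forthcoming passage to the limit legitimate.

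Finally I would substitute $u=(u_1,u_2,u_3)=(nx_1,nx_2,\sqrt{n}\,x_3)$, with Jacobian $n^{-5/2}$, turning the local integral into
$$n^{-5/2}\, e^{-nf(\mathbf 0)}\int_{\Omega_n} g\Big(\tfrac{u_1}{n},\tfrac{u_2}{n},\tfrac{u_3}{\sqrt{n}}\Big)\, \exp\!\Big(-a_1 u_1 - a_2 u_2 - \tfrac{1}{2}b\,u_3^2 - n\rho\big(\tfrac{u_1}{n},\tfrac{u_2}{n},\tfrac{u_3}{\sqrt{n}}\big)\Big)\,\dint u,$$
where the domain $\Omega_n$ increases to $\R^2_+\times\R$ as $n\to\infty$. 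For fixed $u$ one has $g(u_1/n,u_2/n,u_3/\sqrt{n})\to g(\mathbf 0)$ by continuity and $n\rho(u_1/n,u_2/n,u_3/\sqrt{n})\to 0$ (the pure terms $x_1^2,x_2^2,x_1x_2$ scale like $n^{-2}$, the cross terms $x_1x_3,x_2x_3$ like $n^{-3/2}$, and $o(|x|^2)$ like $o(n^{-1})$, each $o(n^{-1})$ before multiplication by $n$), while the quadratic lower bound from the previous step supplies the dominating function $\big(\sup_{x\in F}|g(x)|\big)\exp\!\big(-\tfrac{1}{2}a_1 u_1 - \tfrac{1}{2}a_2 u_2 - \tfrac{1}{4}b\,u_3^2\big)$, which is integrable over $\R^2_+\times\R$. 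Dominated convergence then gives that the local integral equals
$$n^{-5/2}\, e^{-nf(\mathbf 0)}\, g(\mathbf 0)\left(\int_0^\infty e^{-a_1 u_1}\,\dint u_1\right)\!\left(\int_0^\infty e^{-a_2 u_2}\,\dint u_2\right)\!\left(\int_{\R} e^{-\frac{1}{2}b u_3^2}\,\dint u_3\right)(1+o(1)),$$
and evaluating the three elementary integrals as $a_1^{-1}$, $a_2^{-1}$ and $\sqrt{2\pi/b}$ and recombining with the negligible complement yields the stated asymptotics with $x^*=\mathbf 0$. The main obstacle is the second step: one must control the cross terms $x_ix_3$ uniformly on the one-sided region — in the effective scaling $x_i\sim n^{-1}$, $x_3\sim n^{-1/2}$ they are of larger order than the pure $x_i^2$ terms, yet still vanish after multiplication by $n$ — and simultaneously extract a quadratic-type lower bound strong enough to dominate the rescaled integrand; everything else is routine bookkeeping. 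This argument mirrors the strategy of Breitung and Hohenbichler \cite{Breitung1989}, to which one could alternatively appeal directly after matching hypotheses (a)--(d) to their assumptions.
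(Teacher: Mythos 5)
Your proof is correct, but it takes a genuinely different route from the paper's. The paper does not prove this proposition at all: it is stated as a direct citation of Breitung and Hohenbichler \cite[Lemma 4]{Breitung1989} with $n=3$, $k=2$, functions $(g,-f)$ in place of their $(h,f)$, and a correction of a typo in their equation~(11). You, by contrast, supply a self-contained Laplace argument — locализing near the corner $\mathbf 0$, exhibiting an exponentially negligible tail, extracting $f_{[0,0,1]}(\mathbf 0)=0$ from the one-dimensional minimum along the $x_3$-axis, rescaling anisotropically $u=(nx_1,nx_2,\sqrt n\,x_3)$, and passing to the limit by dominated convergence using the quadratic-type lower bound on $f-f(\mathbf 0)$ on the one-sided region. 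This makes explicit what is tacit in the citation: that the error term is correctly dominated in the mixed scaling where the two one-sided coordinates live at scale $n^{-1}$ while the unconstrained one lives at scale $n^{-1/2}$, and that the product structure $a_1^{-1}a_2^{-1}\sqrt{2\pi/b}$ (hence the typo the paper corrects) emerges from a genuine factorization of the limiting integral. The trade-off is length: your version is longer but transparent and avoids reliance on an external lemma whose statement required correction; the paper's version is terse but assumes the reader checks the hypotheses of \cite{Breitung1989} themselves, exactly as you note in your closing sentence. One cosmetic point worth stating explicitly: the conclusion's $x^*$ is $\mathbf 0$; you identify this correctly, but the proposition as written in the paper never defines $x^*$, and it would be cleaner to say so.
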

\begin{rmk}
This is the result from \cite[Lemma 4]{Breitung1989} for $n=3, k=2$ and  functions $g$ and $(-f)$ instead of $h$ and $f$. The parameter $\lambda$ in our setting is replaced by the integer $n \in \N$. Furthermore, a typo within said result has been corrected, namely the sum in \cite[Equation (11)]{Breitung1989} is replaced by a product (compare proof therein). This proposition is quite close to \cite[Lemma 5.1]{LiaoRamanan}, but does not require the same level of smoothness of $f$ and $g$, and $g$ does not depend on $n\in \N$.
\end{rmk} 

To apply this, we use a transformation of $\mathscr{D}_{z} \cap \mathscr{B}_z$, mapping $z^{**}=(z^q, 1,1)$ to $\mathbf{0}$. Consider 
$$\mathfrak{I}: \R^3 \to \R^3 \qquad \text{ with } \qquad \mathfrak{I}(x_1, x_2, y) = (y^q x_1 - z^q x_2^{q/p}, 1-y, x_2 -1)= (t_1, t_2, t_3).$$
It then holds that $\mathfrak{I}(z^{**}) = \mathbf{0}$ and $\mathfrak{I}(\mathscr{D}_z) = \tilde{\mathscr{D}}_z := \{t \in \R^3: t_1 > 0, t_2 \in [0,1), t_3 > -1\}$.
 Furthermore,  in a neighbourhood of $z^{**}$ small enough such that $t_2 <1$, $\mathfrak{I}$ is invertible with 
$$\mathfrak{I}^{-1}(t_1, t_2, t_3) = \left(\frac{t_1 + z^q(t_3+1)^{q/p}}{(1-t_2)^{q}} , \,  t_3 + 1, \,  1-t_2\right)\hspace{-0.1cm}.$$
Let us calculate the Jacobian of $\mathfrak{I}^{-1}$: 
\begin{equation} \label{eq:JacobiTrsf}
J_{t} \mathfrak{I}^{-1}(t) = \left(\begin{array}{ccc} 
\frac{1}{(1-t_2)^q}& \frac{q(t_1 + z^q(t_3 + 1)^{q/p})}{(1-t_2)^{q+1}} &  \frac{  z^q \frac{q}{p}(t_3 + 1)^{(q/p) -1}}{(1-t_2)^q}\\ 
0 & 0  & 1\\ 
0 &-1 & 0 
\end{array}\right)\hspace{-0.1cm}. 
\end{equation}
Thus, we have that $ \displaystyle |\det J_{t} \mathfrak{I}^{-1}(t)| = (1-t_2)^{-q}$. We set $ \mathscr{g}(x_1, x_2, y) = y^{-1}{(\det \mathfrak{H}_{x})}^{-1/2}$, as well as $\tilde{\mathscr{B}}_z := \mathfrak{I}(\mathscr{B}_z)$, and transform the area of integration via $\mathfrak{I}^{-1}$, yielding%
\begin{eqnarray*}
\Pro \left(\mathscr{S}^{(n)} \in \mathscr{D}_z \cap \mathscr{B}_z\right) &=& \int_{\mathscr{D}_{z} \cap \, \mathscr{B}_z} \mathscr{h}^{(n)}(x_1, x_2, y) \, \dint x_1 \dint x_2 \, \dint y\\
\\
&=&  \frac{n^2}{2\pi} \, \int_{\mathscr{D}_{z} \cap \, \mathscr{B}_z}  y^{-1}{(\det \mathfrak{H}_{x})}^{-1/2} \, e^{- n \, [\Lambda_p^*(x_1, x_2) - \log(y)]} \, \dint x_1 \, \dint x_2 \, \dint y \, (1 + o(1))\\
\\
&=&  \frac{n^2}{2\pi} \, \int_{\mathscr{D}_{z} \cap \, \mathscr{B}_z}  \mathscr{g}(x_1, x_2, y) \, e^{- n \, \mathcal{I}_{\mathscr{S}}(x_1, x_2, y)} \, \dint x_1 \, \dint x_2 \, \dint y \, (1 + o(1))\\
\\
&=&\frac{n^2}{2\pi} \, \int_{\tilde{\mathscr{D}}_{z} \cap \, \tilde{\mathscr{B}}_z}  \mathscr{g}\circ\mathfrak{I}^{-1}(t) \, e^{- n \, \mathcal{I}_{\mathscr{S}}\circ\mathfrak{I}^{-1}(t) } \, (1-t_2)^{-q} \, \dint t \, (1 + o(1)).
\end{eqnarray*}
We now set $\tilde g(t) := (1-t_2)^{-q} \, \mathscr{g}\circ\mathfrak{I}^{-1}(t)$ and $\tilde f(t):= \mathcal{I}_{\mathscr{S}}\circ\mathfrak{I}^{-1}(t)$, then 
\begin{equation} \label{eq:PreLRInt}
\Pro\left(\mathscr{S}^{(n)} \in \mathscr{D}_z \cap \mathscr{B}_z\right) =\frac{n^2}{2\pi} \, \int_{\tilde{\mathscr{D}}_{z} \cap \, \tilde{\mathscr{B}}_z}  \tilde g(t) \, e^{- n \, \tilde f (t)} \, \dint t \, (1 + o(1)).
\end{equation}
We intend to apply Proposition \ref{prop:Breitung} to the integral in \eqref{eq:PreLRInt} for $F=\tilde{\mathscr{B}}_z$. It  holds that $\tilde{\mathscr{D}}_{z} \cap \, \tilde{\mathscr{B}}_z$ is bounded and since the value of the integral is the same if we integrate over the open set $\tilde{\mathscr{B}}_z$ or its closure, we will continue to work with $\tilde{\mathscr{B}}_z$. Further, we have that $\tilde{\mathscr{B}}_z$ contains the origin in its interior, as the interior point $z^{**}$ of ${\mathscr{B}}_z$ is again mapped by the continuous function $\mathfrak{I}$ onto an interior point, which is $\mathfrak{I}(z^{**}) = \mathbf{0}$. Since we have chosen the neighbourhood ${\mathscr{B}}_z$ of $z^{**}$ small enough for $\tilde{\mathscr{B}}_z$ to not contain $(t_1, 1, t_3)$, it holds that%
 $$\tilde g(t) = (1-t_2)^{-q} \, \left[(1- t_2)^{-1} \, {\left( \det \mathfrak{H}_{(\mathfrak{I}^{-1}(t)_1, \mathfrak{I}^{-1}(t)_3)}\right)}^{-1/2}\right]$$
 is also differentiable on $\tilde{\mathscr{D}_z} \cap \tilde{\mathscr{B}}_z$ as a composition of differentiable functions and thereby continuous on $\tilde{\mathscr{B}}_z$. The differentiability of $\mathfrak{I}^{-1}$, together with that of $\Lambda_p^*$ shown in the proof of Theorem \ref{thm:MainResult1}, yields the differentiability (and thereby the continuity) of $\tilde f(t):= \mathcal{I}_{\mathscr{S}}\circ\mathfrak{I}^{-1}(t)$ on  $\tilde{\mathscr{B}}_z$.  It holds furthermore that %
 \begin{eqnarray} \label{eq:GinNull}
\tilde g(\mathbf{0}) = {\left( \det \mathfrak{H}_{z^*}\right)}^{-1/2},
\end{eqnarray}
 which is positive, since $\mathfrak{H}_{z^*}$ is positive definite on $\mathcal{J}_p$, as discussed in Section \ref{sec:Preliminaries}. Again, for ${\mathscr{B}}_z$ small enough, it also holds (up to a null set) that $ \tilde{\mathscr{B}}_z \cap (\R^2_+ \times \R) = \tilde{\mathscr{B}}_z \cap \tilde{\mathscr{D}}_z$, on which we know from Lemma \ref{lem:UniqueMinRFUnp} and Lemma \ref{lem:UniqueMinCl} that $\mathbf{0} = \mathfrak{I}(z^{**}) $ is the unique infimum of $\tilde f$ since %
\begin{eqnarray} \label{eq:FinNull}
\tilde f(\mathbf{0}) = \mathcal{I}_{\mathscr{S}} \circ \mathfrak{I }^{-1}(\mathbf{0}) = \mathcal{I}_{\mathscr{S}}(z^{**}) = \Lambda_p^*(z^*).
\end{eqnarray}
 We can see from \eqref{eq:JacobiTrsf} that all partial derivatives of $\mathfrak{I}^{-1}$ are all themselves continuously differentiable in a sufficiently small neighbourhood of $\mathbf{0}$. Thereby, $\mathfrak{I}^{-1}$ is twice continuously differentiable in such a neighbourhood. The two-fold continuous differentiability of $\Lambda_p^*$ has already been shown in the proof of Theorem \ref{thm:MainResult1}. %
 Finally, by Lemma \ref{lem:Ableitungen} i), it holds that
\begin{eqnarray*} 
 \nabla_{(x_1, x_2, y)} \mathcal{I}_{\mathscr{S}}(z^{**}) &=& \left( \frac{\partial}{\partial x_1} \Lambda_p^*(x_1, x_2), \frac{\partial}{\partial x_2} \Lambda_p^*(x_1, x_2), -\frac{1}{y}\right)\Big{|}_{(x_1, x_2, y)=z^{**}}\\
 \\
 &=& \left( \tau(x)_1, \tau(x)_2, -\frac{1}{y}\right)\Big{|}_{(x_1, x_2, y)=z^{**}}\\
 \\
  &=& \left( \tau(z^*)_1, \tau(z^*)_2, -1\right),
\end{eqnarray*}
 from which we can deduce that %
\begin{eqnarray} \label{eq:AbleitungenF}
\nonumber \nabla_t \tilde{f}(\mathbf{0})  \,  &=&  \, \nabla_{(x_1, x_2, y)} \mathcal{I}_{\mathscr{S}}(z^{**}) \, J_{t} \mathfrak{I}^{-1}(\mathbf{0})\\
\nonumber\\
\nonumber&=& \left( \tau(z^*)_1, \tau(z^*)_2, -1\right) \,  \left(\begin{array}{ccc} 
1& q z^q & z^q  \, \frac{q}{p}\\ 
0 & 0  & 1\\ 
0 &-1 & 0 
\end{array}\right)\\
\nonumber\\
&=& \left(\tau(z^*)_1, \, \, qz^q \tau(z^*)_1+1,  \, \, z^q  \, \frac{q}{p} \tau(z^*)_1 + \tau(z^*)_2 \right).
\end{eqnarray}
It thereby follows that  $\nabla_t \tilde{f}(\mathbf{0}) \ne \mathbf{0}$, as the first two components cannot be equal to zero simultaneously. But since $\tilde{f}(t)$ attains its infimum on $ \tilde{\mathscr{B}}_z \cap (\R^2_+ \times \R)$ in $t = \mathbf{0}$, it holds that $\tilde f_{[1,0,0]}>0$ and $\tilde f_{[0,1,0]} >0$, as otherwise a step into either direction $t_1, t_2$ would maintain or decrease the value of $\tilde{f}$, contradicting the unique infimum property of $\mathbf{0}$. On the other hand, by the same argument, it has to hold that $\tilde f_{[0,0,1]} =0$ and $\tilde f_{[0,0,2]} >0$, as otherwise a step into either direction $t_3, (-t_3)$ would maintain or decrease $\tilde f$, again contradicting the unique infimum property of $\mathbf{0}$. Hence, we have shown all conditions for Proposition \ref{prop:Breitung}, whereby it now follows for the integral in \eqref{eq:PreLRInt} that
\begin{eqnarray} \label{eq:PostBreitungInt}
\nonumber \Pro\left(\mathscr{S}^{(n)} \in \mathscr{D}_z \cap \mathscr{B}_z\right) &=&\frac{n^2}{2\pi} \, \int_{\tilde{\mathscr{D}}_{z} \cap \, \tilde{\mathscr{B}}_z}  \tilde g(t) \, e^{- n \, \tilde f (t)} \, \dint t \, (1 + o(1))\\
\nonumber\\
\nonumber &=& \frac{n^2}{2\pi} \, \int_{\tilde{\mathscr{B}}_z \cap (\R^2_+ \times \R)}  \tilde g(t) \, e^{- n \, \tilde f (t)} \, \dint t \, (1 + o(1))\\
\nonumber\\
&=&\frac{1}{\sqrt{2\pi  n}} \, \frac{ \tilde g( \mathbf{0} )}{\tilde f_{[1,0,0]} \tilde f_{[0,1,0]} \sqrt{\tilde f_{[0,0,2]}}} e^{-n \tilde f(\mathbf{0})} (1+ o(1)).
\end{eqnarray}
The final term that remains to be calculated explicitly is $\tilde f_{[0,0,2]}$, as $\tilde f_{[1,0,0]}$ and  $\tilde f_{[0,1,0]}$ are given in \eqref{eq:AbleitungenF}.%
We start by noting that 
\begin{eqnarray*} 
\frac{\partial}{\partial t_3} \mathfrak{I}^{-1}(t) \Big{|}_{t = \mathbf{0}} =   \left(  \frac{  z^q \frac{q}{p}(t_3 + 1)^{(q/p) -1}}{(1-t_2)^q}, 1, 0\right)\Big{|}_{t = \mathbf{0}} =\left(z^q\frac{p}{q}, 1, 0\right),
\end{eqnarray*}
and
\begin{eqnarray*} 
\frac{\partial^2}{\partial^2 t_3} \mathfrak{I}^{-1}(t) \big{|}_{t = \mathbf{0}} = \left(  \frac{  z^q \frac{q}{p}\big(\frac{q}{p} -1\big)(t_3 + 1)^{(q/p) -2}}{(1-t_2)^q}, 0, 0\right)\Big{|}_{t = \mathbf{0}}= \left(\frac{z^q q^2}{p^2} -\frac{z^q q}{p}, 0, 0\right).
\end{eqnarray*}
By Lemma \ref{lem:Ableitungen} ii), we get that 
\vspace{-0.25cm}
$$\mathcal{H}_{(x_1, x_2, y)} \mathcal{I}_{\mathscr{S}}(z^{**}) = \left(\begin{array}{ccc} 
\vphantom{\int \limits^1} \left(\mathfrak{H}_{z^*}^{-1}\right)_{11}& \left(\mathfrak{H}_{z^*}^{-1}\right)_{12} & 0\\
\vphantom{\int \limits_0^1} \left(\mathfrak{H}_{z^*}^{-1}\right)_{21} & \left(\mathfrak{H}_{z^*}^{-1}\right)_{22}  & 0\\ 
0 & 0 & y^{-2}
\end{array}\right).$$
It thereby follows that
\begin{eqnarray} \label{eq:AblTilde3}
\nonumber \tilde f_{[0,0,2]}  \hspace{-0.2cm}&=& \hspace{-0.1cm} \frac{\partial^2}{\partial^2 t_3} \mathcal{I}_{\mathscr{S}}\circ\mathfrak{I}^{-1}(\mathbf{0})\\
\nonumber \\
\nonumber &=&  \hspace{-0.1cm} \frac{\partial}{\partial t_3}\left[ \nabla_{(x_1, x_2, y)} \mathcal{I}_{\mathscr{S}}(\mathfrak{I}^{-1}(t)) \, \frac{\partial}{\partial t_3} \mathfrak{I}^{-1}(t)\right] \Bigg{|}_{t = \mathbf{0}}\\
\nonumber \\
\nonumber &=&  \hspace{-0.1cm} \frac{\partial}{\partial t_3}\left[ \nabla_{(x_1, x_2, y)} \mathcal{I}_{\mathscr{S}}(\mathfrak{I}^{-1}(t))\right]\Big{|}_{t = \mathbf{0}}   \, \, \frac{\partial}{\partial t_3} \mathfrak{I}^{-1}(\mathbf{0}) +   \nabla_{(x_1, x_2, y)} \mathcal{I}_{\mathscr{S}}(z^{**})  \, \,  \frac{\partial^2}{\partial^2 t_3} \mathfrak{I}^{-1}(\mathbf{0})\\
\nonumber \\
\nonumber &=&  \hspace{-0.1cm} \left(z^q\frac{q}{p}, 1, 0\right)  \mathcal{H}_{(x_1, x_2, y)} \mathcal{I}_{\mathscr{S}}(z^{**}) \,  \,    \left(z^q\frac{q}{p}, 1, 0\right)  + \Big( \tau(z^*)_1, \tau(z^*)_2, -1\Big)   \left(\frac{z^q q^2}{p^2} -\frac{z^q q}{p}, 0, 0\right)\\
\nonumber \\
\nonumber &=& \hspace{-0.1cm} \left(\frac{z^q q}{p}, 1, 0\right)    \left(\frac{z^q q}{p}\left(\mathfrak{H}_{z^*}^{-1}\right)_{11} + \left(\mathfrak{H}_{z^*}^{-1}\right)_{12},  \frac{ z^q q}{p} \left(\mathfrak{H}_{z^*}^{-1}\right)_{21} + \left(\mathfrak{H}_{z^*}^{-1}\right)_{22},  0\right) \hspace{-0.05cm}+  \tau(z^*)_1 \Big(\frac{z^q q^2}{p^2} -\frac{z^q q}{p}\Big) \\
\nonumber \\
&=& \hspace{-0.1cm} \frac{z^{2q} q^2}{p^2}\left(\mathfrak{H}_{z^*}^{-1}\right)_{11} + \frac{2z^q q}{p} \left(\mathfrak{H}_{z^*}^{-1}\right)_{12} +  \left(\mathfrak{H}_{z^*}^{-1}\right)_{22} + \, \tau(z^*)_1 \Big(\frac{z^q q^2}{p^2} -\frac{z^q q}{p}\Big).   \vphantom{\int\limits_0}
\end{eqnarray}
Plugging the terms from \eqref{eq:GinNull}, \eqref{eq:AbleitungenF} and \eqref{eq:AblTilde3} into the fraction in \eqref{eq:PostBreitungInt}, we get that 
\begin{eqnarray} \label{eq:FracLR}
\nonumber\frac{\tilde g(\mathbf{0})}{\tilde f_{[1,0,0]} \tilde f_{[0,1,0]} \sqrt{|\tilde f_{[0,0,2]}|}} &=& {\left( \det \mathfrak{H}_{z^*}\right)}^{-1/2} \,  (\tau(z^*)_1)^{-1} \, (qz^q \tau(z^*)_1+1)^{-1}\\
\nonumber&& \times \, {\Bigg[   \frac{z^{2q} q^2}{p^2}\left(\mathfrak{H}_{z^*}^{-1}\right)_{11} + \frac{2z^q q}{p} \left(\mathfrak{H}_{z^*}^{-1}\right)_{12} +  \left(\mathfrak{H}_{z^*}^{-1}\right)_{22} + \, \tau(z^*)_1 \Big(\frac{z^q q^2}{p^2} -\frac{z^q q}{p} \Big)\Bigg]}^{-1/2}  \vphantom{\int\limits_0}\\
\nonumber&=& \Bigg[ \det \mathfrak{H}_{z^*} \,  \left(\tau(z^*)_1\right)^{2} \, \left(qz^q \tau(z^*)_1+1 \right)^{2}\\
\nonumber&& \times \, \Bigg(   \frac{z^{2q} q^2}{p^2}\left(\mathfrak{H}_{z^*}^{-1}\right)_{11} + \frac{2z^q q}{p} \left(\mathfrak{H}_{z^*}^{-1}\right)_{12} +  \left(\mathfrak{H}_{z^*}^{-1}\right)_{22} + \, \tau(z^*)_1 \frac{z^q q(q-p)}{p^2} \Bigg{)} \Bigg]^{-1/2}   \vphantom{\int\limits_0}\\
&=& \gamma(z)^{-1},
\end{eqnarray}
with $\gamma(z)$ as in \eqref{eq:NormTermGamma}. Hence, it follows with \eqref{eq:FinNull}, \eqref{eq:PostBreitungInt}, and \eqref{eq:FracLR} that 
\begin{equation} \label{eq:UnpIntInBz}
 \Pro\left(\mathscr{S}^{(n)} \in \mathscr{D}_z \cap \mathscr{B}_z\right) = \frac{1}{\sqrt{2\pi n} \, \gamma(z)} e^{-n \,  \Lambda_p^*(z^*)} \ (1+ o(1)).
\end{equation}
Combining the representation from \eqref{eq:SplitIntegralUnp} with the two integral estimates from \eqref{eq:PropRepIntComplUnp} and \eqref{eq:UnpIntInBz} shows that the integral in the complement of $\mathscr{B}_z$ can be neglected and we have that 
$$
	\Pro\left(n^{1/p-1/q} \|\mathscr{Z}^{(n)}\|_q > z\right) = \Pro\left(\mathscr{S}^{(n)} \in \mathscr{D}_z \cap \mathscr{B}_z\right) =  \frac{1}{\sqrt{2\pi n} \, \gamma(z)} \, e^{-n \, \Lambda_p^*(z^*)} \, (1 + o(1)),
$$
which proves our second main result for $\ell_p^n$-balls.
\end{proof}
\text{}
\section*{Appendix}\label{sec:Appendix}
\begin{proof}[Proof of Lemma \ref{lem:UniqueMinRF}]
Let $z >m_{p,q}$ such that $z^*= (z^q, 1) \in \mathcal{J}_p$. Then it holds that 
$$\displaystyle \mathcal{I}_{\|Z\|}(z) = \inf_{{\text{$t_1, t_2 >0$}}\atop{\text{$t_1^{1/q}t_2^{-1/p} = z$}}}  \Lambda_p^*(t_1, t_2) = \inf_{\tilde{t}_1,\tilde{t}_2 >0:  \\ \, \tilde{t}_1=z \, \tilde{t}_2} \Lambda_p^*(\tilde{t}_1^q, \tilde{t}_2^p) = \inf_{\tilde{t}_2 >0} \Lambda_p^*(z^q {\tilde{t}_2}^{q}, {\tilde{t}_2}^p).$$
We set $t_z := (z^q \tilde{t}_2^{q}, \tilde{t}_2^p)$, then with \eqref{eq:ArgSupLegendre} it follows that 
$$\displaystyle \mathcal{I}_{\|Z\|}(z) = \inf_{\tilde{t}_2 >0} \sup_{s \in \R^2} \left( \langle s, t_z \rangle - \Lambda_p(s) \right) = \inf_{\tilde{t}_2 >0}   \Big[\langle \tau(t_z), t_z \rangle - \Lambda_p(\tau(t_z)) \Big].$$
	Our goal is to show that the infimum is attained at $ t_z^* := z^*$, i.e. at $\tilde{t}_2=1$. Recall the definition $g_t(s) := \langle s, t \rangle - \Lambda_p(s)$ for $t \in \mathcal{J}_p$ from Section \ref{subsec:LpLDPs}. By the definition of $\tau(t_z)$ it holds that $g_{t_z}(s)$ attains its supremum at $\tau(t_z)$, thus it holds that $\nabla_s  \, g_{t_z}(s) \big{|}_{s=\tau(t_z)}  = t_z - \nabla_s \Lambda_p(s)\big{|}_{s=\tau(t_z)} =0,$ which gives
\vspace{-0.25cm}
	\begin{eqnarray}\label{eq:GradGLS}
 t_z = \left ( z^q \tilde{t}_2^q, \tilde{t}_2^p \right) =  \left(\frac{\partial}{\partial s_1} \Lambda_p(s)\big{|}_{s=\tau(t_z)},  \frac{\partial}{\partial s_2} \Lambda_p(s)\big{|}_{s=\tau(t_z)}  \right).
	\end{eqnarray} 
	\text{}\vspace{-0.1cm}\\
We now aim to write $\frac{\partial}{\partial s_2} \Lambda_p(s)$ with respect to $\frac{\partial}{\partial s_1} \Lambda_p(s)$ and then use the above equations. To do so, we firstly want to reformulate $\Lambda_p$  along the lines of \cite[Lemma 5.7]{GKR}. It holds that 
$$\displaystyle \Lambda_p(s) := \log \int_\R e^{s_1 |y|^q + s_2 |y|^p} f_p(y) \, \dint y = \log \left( \frac{1}{2p^{1/p}\Gamma\big(1+\frac{1}{p}\big)}\, \int_\R e^{s_1 |y|^q - \frac{1}{p}(1 - ps_2) |y|^p} \, \dint y\right)\hspace{-0.1cm}.$$
The change of variables $x = (1-ps_2)^{1/p} y$ then gives 
$$\hspace{0.1cm} \displaystyle \Lambda_p(s) = \log \left( (1-ps_2)^{-1/p}  \int_\R e^{\frac{s_1}{(1-ps_2)^{q/p}} |x|^q } f_p(x) \, \dint x \right) = -\frac{1}{p} \log (1-ps_2) + \log \varphi_{|X|^q}\left(\frac{s_1}{(1-ps_2)^{q/p}}\right) \hspace{-0.1cm},$$
where $\varphi_{|X|^q}$ is the m.g.f. \hspace{-0.05cm}of a random variable $|X|^q$ with $X \sim \mathbf{N}_p$. Hence, 
\begin{eqnarray*}
\nonumber \frac{\partial}{\partial s_1} \Lambda_p(s) &=& \frac{\partial}{\partial s_1}  \left[ \log \varphi_{|X|^q}\left(\frac{s_1}{(1-ps_2)^{q/p}}\right) \right]\\
\nonumber \\
\nonumber &=&\varphi_{|X|^q}\left(\frac{s_1}{(1-ps_2)^{q/p}}\right)^{-1} \frac{\partial}{\partial s_1}  \left[ \varphi_{|X|^q}\left(\frac{s_1}{(1-ps_2)^{q/p}}\right) \right]\\
\nonumber \\
\nonumber &=&\varphi_{|X|^q}\left(\frac{s_1}{(1-ps_2)^{q/p}}\right)^{-1}  \int_\R (1-ps_2)^{-q/p} \,  |x|^q \, e^{\frac{s_1}{(1-ps_2)^{q/p}} |x|^q} f_p(x) \, \dint x \\
\nonumber \\
 &=& (1-ps_2)^{-q/p} \varphi_{|X|^q}\left(\frac{s_1}{(1-ps_2)^{q/p}}\right)^{-1}  \varphi_{|X|^q}^{\prime} \left(\frac{s_1}{(1-ps_2)^{q/p}}\right),
\end{eqnarray*} 
where $\varphi_{|X|^q}^{\prime} \left(\frac{s_1}{(1-ps_2)^{q/p}}\right) = \varphi_{|X|^q}^{\prime} \left(t\right) \Big|_{t = \frac{s_1}{(1-ps_2)^{q/p}}}$. Moreover, with the above we get that 
\begin{eqnarray}\label{eq:DerivativeRewrite2}
	\nonumber \frac{\partial}{\partial s_2} \Lambda_p(s) &=& {(1-ps_2)}^{-1}  + \frac{\partial}{\partial s_2}  \left[ \log \varphi_{|X|^q}\left(\frac{s_1}{(1-ps_2)^{q/p}}\right) \right]\\
\nonumber	\\
\nonumber	&=& {(1-ps_2)}^{-1}  +  \varphi_{|X|^q}\left(\frac{s_1}{(1-ps_2)^{q/p}}\right)^{-1} \frac{\partial}{\partial s_2} \left[  \varphi_{|X|^q}\left(\frac{s_1}{(1-ps_2)^{q/p}}\right) \right]\\
\nonumber	\\
\nonumber	&=&{(1-ps_2)}^{-1}  + \varphi_{|X|^q}\left(\frac{s_1}{(1-ps_2)^{q/p}}\right)^{-1}  \int_\R \frac{q s_1}{(1-ps_2)^{(q+p)/p}} \,  |x|^q \, e^{\frac{s_1}{(1-ps_2)^{q/p}} |x|^q} f_p(x) \, \dint x \\
\nonumber	\\
\nonumber	&=& {(1-ps_2)}^{-1}  +  \frac{q s_1}{(1-ps_2)^{(q+p)/p}} \,\varphi_{|X|^q}\left(\frac{s_1}{(1-ps_2)^{q/p}}\right)^{-1}  \varphi_{|X|^q}^{\prime} \left(\frac{s_1}{(1-ps_2)^{q/p}}\right)\\
\nonumber	\\
\nonumber	&=& {(1-ps_2)}^{-1}  +  \frac{q s_1}{1-ps_2} \, (1-ps_2)^{-q/p} \,\varphi_{|X|^q}\left(\frac{s_1}{(1-ps_2)^{q/p}}\right)^{-1}  \varphi_{|X|^q}^{\prime} \left(\frac{s_1}{(1-ps_2)^{q/p}}\right)\\
\nonumber	\\
&=& {(1-ps_2)}^{-1}  +  q s_1{(1-ps_2)}^{-1} \, \frac{\partial}{\partial s_1} \Lambda_p(s).
\end{eqnarray} 
Plugging in the identities from \eqref{eq:GradGLS} into \eqref{eq:DerivativeRewrite2} it follows for $(s_1, s_2) = \big(\tau(t_z)_1, \tau(t_z)_2\big)$:
\begin{equation} \label{eq:DerivativeRewrite3}
 \displaystyle   \tilde{t}_2^p =  \displaystyle {(1-p\tau(t_z)_2)}^{-1}  +  q \tau(t_z)_1 {(1-p\tau(t_z)_2)}^{-1} \, z^q \tilde{t}_2^q. 
\end{equation} 
Using this, we can calculate the derivative of $\Lambda_p^*(t_z)$ in $t$ (we write $t$ instead of $\tilde{t}_2$ for notational brevity), where $\tau(t_z)$ is considered as a function in $t$ as well. It holds that 
\begin{eqnarray}\label{eq:AblLegendre}
\nonumber \frac{\partial}{\partial t} \Lambda_p^*(t_z) \hspace{-0.1cm} &=& \hspace{-0.1cm}\frac{\partial}{\partial t} \Lambda_p^*(z^q t^q, t^p) \vphantom{\sum \limits_0}\\
\nonumber &=& \hspace{-0.1cm}\frac{\partial}{\partial t} \big[ \langle t_z, \tau(t_z) \rangle - \Lambda_p(\tau(t_z)) \big] \vphantom{\sum \limits_0}\\
\nonumber &=& \hspace{-0.1cm}\frac{\partial}{\partial t} \big[ z^q t^q \tau(t_z)_1 + t^p \tau(t_z)_2 - \Lambda_p(\tau(t_z)) \big] \vphantom{\sum\limits_0}\\
\nonumber &=&   \hspace{-0.1cm}z^q q t^{q-1} \tau(t_z)_1 + z^q t^q \frac{\partial}{\partial t} \tau(t_z)_1 + p t^{p-1} \tau(t_z)_2 + t^p \frac{\partial}{\partial t} \tau(t_z)_2 -  \frac{\partial}{\partial t} \Lambda_p(\tau(t_z)) \vphantom{\sum\limits_0}\\
\nonumber &=&   \hspace{-0.1cm}z^q q t^{q-1} \tau(t_z)_1 + z^q t^q \frac{\partial}{\partial t} \tau(t_z)_1 + p t^{p-1} \tau(t_z)_2 + t^p \frac{\partial}{\partial t} \tau(t_z)_2 -  J_t(\tau(t_z)) \nabla_s \Lambda_p(s)\big{|}_{s=\tau(t_z)} \vphantom{\sum\limits_0}\\
\nonumber &=&   \hspace{-0.1cm}z^q q t^{q-1} \tau(t_z)_1 + z^q t^q \frac{\partial}{\partial t} \tau(t_z)_1 + p t^{p-1} \tau(t_z)_2 + t^p \frac{\partial}{\partial t} \tau(t_z)_2 \vphantom{\sum\limits_0}\\
\nonumber && \hspace{-0.1cm} - \frac{\partial}{\partial t} \tau(t_z)_1 \frac{\partial}{\partial s_1} \Lambda_p(s)\big{|}_{s=\tau(t_z)} - \frac{\partial}{\partial t} \tau(t_z)_2 \frac{\partial}{\partial s_2} \Lambda_p(s)\big{|}_{s=\tau(t_z)}.
\end{eqnarray} 
\text{}\\
We now use the identity from \eqref{eq:GradGLS}, which yields
\begin{eqnarray}\label{eq:AblLegendre2}
\nonumber \frac{\partial}{\partial t} \Lambda_p^*(t_z)  \hspace{-0.1cm}&=& \hspace{-0.1cm}  z^q q t^{q-1} \tau(t_z)_1 + z^q t^q \frac{\partial}{\partial t} \tau(t_z)_1 + p t^{p-1} \tau(t_z)_2 + t^p \frac{\partial}{\partial t} \tau(t_z)_2 - \frac{\partial}{\partial t} \tau(t_z)_1  z^q t^q - \frac{\partial}{\partial t} \tau(t_z)_2 t^p\vphantom{\sum\limits_0} \\
&=& \hspace{-0.1cm}z^q q t^{q-1} \tau(t_z)_1 + p t^{p-1} \tau(t_z)_2.
\end{eqnarray} 
\text{}\\
Reformulating the identity in \eqref{eq:DerivativeRewrite3} yields
\begin{equation}\label{eq:DerivativeRewrite4}
 \displaystyle   t^p =  \displaystyle {(1-p\tau(t_z)_2)}^{-1}  +  q \tau(t_z)_1{(1-p\tau(t_z)_2)}^{-1} \, z^q t^q \Leftrightarrow  (1-p\tau(t_z)_2) t^{p-1} - t^{-1}= z^q t^{q-1}q \tau(t_z)_1.  
\end{equation} 
Thus, if we set $\frac{\partial}{\partial t} \Lambda_p^*(t_a) =0$, we get from \eqref{eq:AblLegendre2} and \eqref{eq:DerivativeRewrite4} that  
\begin{eqnarray*}\label{eq:AblLegendre3}
\nonumber \frac{\partial}{\partial t} \Lambda_p^*(t_z)  =0 & \Leftrightarrow & 0= z^q q t^{q-1} \tau(t_z)_1 + p t^{p-1} \tau(t_z)_2 \vphantom{\sum}\\
& \Leftrightarrow & 0=  (1-p\tau(t_z)_2) t^{p-1} - t^{-1} + p t^{p-1} \tau(t_z)_2 \vphantom{\sum\limits_0}\\ 
& \Leftrightarrow & t=  1.
\end{eqnarray*} 
Hence, the infimum of $\Lambda_p^*$ over $\partial D_{z}$ is attained at $t_z^* = (z^q, 1) = z^*$ Since $\Lambda_p^*$ is strictly convex (see properties of the Legendre-Fenchel transform), this minimum is unique. Thereby, our claim is proven.
 \end{proof}
%
%
%
\begin{proof}[Proof of Lemma \ref{lem:UniqueMinRFUnp}]
Let $ z > m_{p,q}$ such that $z^{*} = (z^q, 1) \in \mathcal{J}_p$. Furthermore, set $z^{**} := (z^q,1,1)$ and $\mathcal{I}_{\mathscr{S}}(t) := [ \Lambda_p^*(t_1, t_2) -  \log(t_3)]$, $t\in \R^3$. We use the definitions of $\mathcal{I}_{\|Z\|}$ and $\mathcal{I}_{U}$, together with Lemma  \ref{lem:UniqueMinRF}, to get that 
\begin{eqnarray*}\mathcal{I}_{\|\mathscr{Z}\|}(z) &=& \displaystyle \inf_{{\text{$z = t_1^{1/q} t_2^{-1/p}t_3$}}\atop{\text{$t_1, t_2 >0, t_3 \in (0,1]$}}} \mathcal{I}_{\mathscr{S}}(t)\\
\\
&=& \displaystyle \inf_{{\text{$z = z_1 z_2$}}\atop{\text{$z_1>0, z_2 \in (0,1]$}}} \left[  \inf_{{\text{$t_1, t_2 >0$}}\atop{\text{$t_1^{1/q}t_2^{-1/p} = z_1$}}} \Lambda_p^*(t_1,t_2) + \mathcal{I}_U(z_2)\right] \\
\\
&=&  \displaystyle \inf_{{\text{$z = z_1 z_2$}}\atop{\text{$z_1 >0, z_2 \in (0,1]$}}} \left[ \Lambda_p^{*}(z_1^q,1) - \log(z_2)\right].
\end{eqnarray*}
By the same arguments as in the proof of Lemma \ref{lem:UniqueMinCl}, we know that $\mathcal{I}_{\|Z\|}(z)  = \Lambda_p^*(z^q, 1)$ is strictly convex in $z$ on $\mathcal{J}_p$ with a unique root in $m_{p,q}$. Hence, it follows that for $z> m_{p,q}$ with $z \in \mathcal{J}_p$ it holds that $\mathcal{I}_{\|Z\|}(z) = \Lambda_p^*(z^q,1)$ is strictly increasing in $z$. Since $z_2 \le 1$, $z=z_1 z_2$, and $1 < q$, we have $z_1^q \ge z > m_{p,q}$, meaning that $\Lambda_p^*(z_1^q,1)$ is strictly increasing in $z_1$. Furthermore, we can see that $- \log(z_2)$ is strictly decreasing in $z_2$. Hence, rewriting $z_1$ with respect to $z_2$ then gives\\
\begin{eqnarray*}
\mathcal{I}_{\|\mathscr{Z}\|}(z) &=&   \inf_{{\text{$z_1 = z/ z_2$}}\atop{\text{$z_2 \in (0,1]$}}} \left[ \Lambda_p^{*}\Big(\Big(\frac{z}{z_2}\Big)^q,1\Big) - \log(z_2)\right],
 \end{eqnarray*}
which is strictly decreasing in $z_2$. Thus, choosing $z_2=1$ gives $z_1 = z$ and 
$$\mathcal{I}_{\|\mathscr{Z}\|}(z) =  \mathcal{I}_{\mathscr{S}}(z^{**}) = \Lambda_p^*(z^*),$$
finishing the proof.
\end{proof}
%
%
 
\section*{Acknowledgments}\label{sec:Ack}

The author would like to thank Kavita Ramanan and Joscha Prochno for the insightful exchanges on the topic of sharp large deviations in asymptotic geometric analysis. Furthermore, the author would like to thank his supervisor Christoph Thäle for the helpful discussions, feedback and constructive criticism throughout the writing of this paper.

\end{document}